\newtheorem{theorem}{Theorem}[section]
\newtheorem{lemma}[theorem]{Lemma}
\newtheorem{corollary}[theorem]{Corollary}
\newtheorem{proposition}[theorem]{Proposition}
\newtheorem{definition}[theorem]{Definition}
\newtheorem{example}[theorem]{Example}
\DeclareMathOperator{\argmin}{argmin}
\DeclareMathOperator{\dom}{dom}
\newcommand{\R}{\mathbb{R}}
\newcommand{\Dom}{\mathrm{Dom}}
\newcommand{\inner}[2]{\langle{#1},{#2}\rangle}
\newcommand{\norm}[1]{\|#1\|}
\newcommand{\ri}{{\mbox{\rm ri\,}}}
\newcommand{\tos}{\rightrightarrows} 
\newcommand{\Y}{\mathcal{Y}}
\newcommand{\X}{\mathcal{X}}
\newcommand{\Z}{\mathcal{Z}}
\newcommand{\Sf}{\mathcal{S}}
\newcommand{\inte}{\mathrm{int}}
\newcommand{\D}{\mathcal{D}}
\newcommand{\vgap}{\vspace{.1in}}
\newcommand{\tx}{\tilde x}
\newcommand{\tz}{\tilde z}
\newcommand{\bi}{\begin{itemize}}
\newcommand{\ei}{\end{itemize}}
\newcommand{\ba}{\begin{array}}
\newcommand{\ea}{\end{array}}
\newcommand{\mgap}{\vspace{.1in}}
\begin{document}

\title{ Improved pointwise iteration-complexity of a regularized ADMM and of a regularized non-Euclidean HPE framework}

\author{
    Max L.N. Gon\c calves
    \thanks{Institute of Mathematics and Statistics, Federal University of Goias, Campus II- Caixa
    Postal 131, CEP 74001-970, Goi\^ania-GO, Brazil. (E-mails: {\tt
       maxlng@ufg.br} and {\tt jefferson@ufg.br}). This work was done while these authors were a postdoc at the School of Industrial and Systems Engineering, Georgia Institute of Technology, Atlanta, GA.  The work of these authors was
    supported in part by  CNPq Grant  444134/2014-0, 309370/2014-0, 200852/2014-0, 201047/2014-4  and FAPEG/GO.}
    \and
      Jefferson G. Melo \footnotemark[1]
    \and
    Renato D.C. Monteiro
    \thanks{School of Industrial and Systems
    Engineering, Georgia Institute of
    Technology, Atlanta, GA, 30332-0205.
    (email: {\tt monteiro@isye.gatech.edu}). The work of this author
    was partially supported by NSF Grant CMMI-1300221.}
}
\date{January 4, 2016 (Revised in August 22, 2016)}

\maketitle

\begin{abstract}
This paper describes a regularized variant of the alternating direction method of multipliers (ADMM) for solving
linearly constrained convex  programs. It is shown that the pointwise iteration-complexity of the new variant is better
than the corresponding one for the standard ADMM method and that, up to a logarithmic term, is identical to the ergodic 
iteration-complexity of the latter method. Our analysis is based on first presenting and establishing
the pointwise iteration-complexity of a regularized non-Euclidean hybrid 
proximal extragradient 
framework whose error condition at each iteration
includes both a relative error and a summable error.
It is then shown that the new ADMM variant is a special instance of the latter framework where the sequence of summable errors is
identically zero when the ADMM stepsize is less than one or a nontrivial sequence when the stepsize is
in the interval $[1,(1+\sqrt{5})/2)$.
\\
  \\
  2000 Mathematics Subject Classification: 
  47H05, 47J22, 49M27, 90C25, 90C30, 90C60, 
  65K10.
\\
\\   
Key words: alternating direction method of multipliers,  hybrid proximal extragradient method, non-Euclidean Bregman distances, convex program,
 pointwise iteration-complexity, first-order methods, inexact proximal point method, regularization.
 \end{abstract}

%
%
%
%
%
%
%
%
%
%
%
%
%
%
%
%
%
%
%
%
%
%
%
%
%

\pagestyle{plain}

\section{Introduction} \label{sec:int}
The goal of this paper is to present a  regularized variant of the alternating direction method of multipliers (ADMM) for solving
the linearly constrained convex problem
\begin{equation} \label{optl1}
\inf \{ f(y) + g(s) : C y + D s = c \}
\end{equation}
where $\X$, $\Y$ and $\Sf$ are  finite dimensional inner product spaces, 
$f: \Y \to (-\infty,\infty]$ and $g:\Sf \to (-\infty,\infty]$ are proper
closed convex functions,  $C: \Y \to \X$ and $D: \Sf \to \X$ are linear operators,
and $c \in \X$.  Many methods have been proposed to solve problems with separable structure such as \eqref{optl1} (see for example \cite{ ACN_ref2, BAC_ref2, ChPo_ref2,DST_ref2,GADMM2015,FPST_editor,0352.65034,0368.65053,Goldstein2014, He2,monteiro2010iteration, LanADMM,PJX_ref2} and the references cited
therein).

A well-known class of ADMM instances for solving \eqref{optl1} recursively computes a sequence
$\{(s_k,y_k,x_k)\}$ as follows. 
Given $(s_{k-1},y_{k-1},x_{k-1})$, the $k$-th triple $(s_k,y_k,x_k)$ is determined as
\begin{align}
s_k &= \argmin_{s} \left \{ g(s) - \inner{ {x}_{k-1}}{Ds}_\X +
\frac{\beta}{2} \| C y_{k-1} + D s - c \|^2_\X + \frac12 \inner{ s-s_{k-1}}{H(s-s_{k-1})} \right\},\nonumber\\
y_k &= \argmin_y \left \{ f(y) - \inner{x_{k-1}}{Cy}_\X +
\frac{\beta}{2} \| C y + D s_k - c \|^2_\X+\frac12 \inner{ y-y_{k-1}}{G(y-y_{k-1})} \ \right\},\label{ADMMclass}\\
x_k &= x_{k-1}-\theta\beta\left[Cy_k+Ds_k-c\right] \nonumber
\end{align}
where $\beta > 0$ is a fixed penalty parameter, $\theta>0$ is a fixed stepsize and
$H$, $G$ are  fixed positive semidefinite self-adjoint linear operators.
If $(H,G)=(0,0)$ in the above class, we obtain the standard ADMM. 

The ADMM   was  introduced in \cite{0352.65034,0368.65053} and is thoroughly discussed in \cite{Boyd:2011,glowinski1984}.
Recently, there has been some growing interest in ADMM (see  for  instance \cite{Boley2013,Cui,Eckstein1,Goldstein2014,Hager,LanADMM,yin2015} and the references
cited therein).
To discuss the complexity results about ADMM, we use the terminology
weak pointwise or strong pointwise bounds
to refer to complexity bounds relative to the best of the $k$ first iterates or the last iterate, respectively, to
satisfy a suitable termination criterion.
The first iteration-complexity bound for the ADMM was established only recently in  
\cite{monteiro2010iteration} under the assumption that $C$ is injective.
More specifically, the ergodic iteration-complexity for the standard ADMM  is derived in  \cite{monteiro2010iteration} for any $\theta \in (0,1]$  while
a weak pointwise iteration-complexity easily follows from the approach in \cite{monteiro2010iteration} for any $\theta \in (0,1)$.
Subsequently, without assuming that $C$ is injective, \cite{HeLinear} established the ergodic iteration-complexity of the  ADMM class \eqref{ADMMclass}
with $G=0$ and $\theta=1$ and, as a consequence, of the well-known split inexact Uzawa method~\cite{Xahang} which chooses
$H=\alpha I - \beta D^*D$ for some $\alpha \ge \beta \|D\|^2$.
Paper \cite{He2} establishes the weak pointwise and ergodic iteration-complexity  
of another collection of ADMM instances which includes the standard ADMM for any $\theta \in (0,(1+\sqrt{5})/2)$.
A strong pointwise iteration-complexity bound for the  ADMM class  \eqref{ADMMclass} with $G=0$ and $\theta =1$ is derived in~\cite{He2015}.
Finally, a number of papers (see for example \cite{Cui,Deng1,GADMM2015,Gu2015,Hager,Lin} and  references therein) have extended
most of these complexity results to the context of the ADMM class \eqref{ADMMclass} as well as other more general ADMM classes.

Although different termination criteria are used in the aforementioned papers, their complexity results can be easily rephrased
in terms of a simple termination, namely: for a given $\rho>0$, terminate with a quadruple $(s,y,x,x') \in \Sf \times \Y\times \X \times \X$ satisfying
\[
\max \{ \|Cy + Ds -c\| , \| x' - x\| \} \le \rho, \quad 0 \in \partial_{\rho} g(s) - D^* x, \quad 0 \in \partial_{\rho} f(y) - C^* x'.
\]
In terms of this termination, the best pointwise iteration-complexity bounds are ${\cal O}(\rho^{-2})$ while the best ergodic
ones are ${\cal O}(\rho^{-1})$ but the pointwise results guarantee that above two inclusions hold with $\rho=0$
(i.e., with $\partial_\rho$ replaced by $\partial$).
This paper  presents a regularized variant of the  ADMM  class \eqref{ADMMclass} whose strong pointwise iteration-complexity is
${\cal O} (\rho^{-1}\log (\rho^{-1}))$ for any stepsize $\theta \in (0,(1+\sqrt{5})/2)$. Note that the latter complexity is better
than the pointwise iteration complexity for the  class  \eqref{ADMMclass} by an ${\cal O}(\rho \log(\rho^{-1}))$ factor.

It was shown in \cite{monteiro2010iteration}
that the standard ADMM with  $\theta \in (0,1]$ and $C$ injective can be viewed as an inexact proximal point (PP) method, more specifically, 
as an instance of the hybrid proximal extragradient (HPE) framework proposed by \cite{Sol-Sv:hy.ext}. 
In contrast to the original Rockafellar's PP method which is based on a summable error condition,
the HPE framework is 
based on a relative HPE error condition
involving Euclidean distances.
Convergence results for the HPE framework are studied in
\cite{Sol-Sv:hy.ext}, and  its weak pointwise and ergodic iteration-complexities are established in \cite{monteiro2010complexity} (see also \cite{monteiro2011complexity,monteiro2010iteration}). 
 Applications of the HPE framework to the iteration-complexity analysis
of several zero-order (resp., first-order)
methods  for solving monotone variational
inequalities and monotone inclusions (resp.,  saddle-point problems)
are discussed in \cite{YHe2,YHe1,monteiro2010complexity,monteiro2011complexity,monteiro2010iteration}.
Paper \cite{SvaiterBregman} describes and studies the convergence of a non-Euclidean HPE (NE-HPE) framework which essentially generalizes
the HPE one to the context of general Bregman distances. The latter framework was further generalized in \cite{Oliver} where its
ergodic iteration-complexity was established.
More specifically, consider the monotone inclusion problem
$0 \in T(z)$
where $T$ is a maximal monotone operator and let $w$ be a convex  differentiable  function.
Recall that for a given pair $(z_-,\lambda)=(z_{k-1},\lambda_k)$, the exact PP method
computes the next iterate $z=z_k$ as the (unique) solution of the  prox-inclusion
$\lambda^{-1} [\nabla w(z_{-})-\nabla w(z)] \in T(z)$.
An instance of the NE-HPE framework described in \cite{Oliver} computes an approximate solution
of this inclusion based on the following relative NE-HPE  error criterion:
for some tolerance $\sigma \in [0,1]$, a triple $(\tilde z,z,\varepsilon)=(\tilde z_k,z_k,\varepsilon_k)$ is computed  such that
\begin{equation}\label{eq:appr-sol_bregman}
r:=\frac{1}{\lambda}\left[\nabla w(z_{-})-  \nabla w(z)\right]   \in T^{\varepsilon}(\tilde{z}), \quad (dw)_{z}(\tilde{z}) + \lambda \varepsilon \leq \sigma (dw)_{z_{-}}(\tilde{z})
\end{equation}
where $dw$ is the Bregman distance defined as
$(dw)_z(z')= w(z')-w(z)-\inner{\nabla w(z)}{z'-z}$ for every $z, z'$  and $T^{\varepsilon}$ denotes the $\varepsilon$-enlargement~\cite{Bu-Iu-Sv:teps}
of $T$ (it has the property that $T^{\varepsilon}(u) \supset T(u)$ for each $u$ with
equality holding when $\varepsilon=0$).
Clearly, if $\sigma=0$ in \eqref{eq:appr-sol_bregman}, then
$z=\tz$ and $\varepsilon=0$, and the inclusion in \eqref{eq:appr-sol_bregman} reduces to the prox-inclusion.
Also, the HPE framework  is the special case of the NE-HPE one in which $w(\cdot)=\|\cdot\|^2/2$ and $\|\cdot\|$ is the Euclidean norm.

Section~\ref{sec:smhpe} considers a monotone inclusion problem (MIP) of the form $0 \in (S+T)(z)$ where $S$ and $T$ are maximal monotone, $S$ is $\mu$-monotone with respect to $w$ for some $\mu>0$ (see condition {\bf A1})
and $w$ is a regular distance generating function (see Definition \ref{def:assu}).
It then presents and  establishes the strong pointwise iteration-complexity of a variant of the NE-HPE framework for solving such a MIP
in which the inclusion in \eqref{eq:appr-sol_bregman} is strengthened to $r \in S(\tilde z) + T^{\varepsilon}(\tilde{z})$ but its
error condition is weakened in that
an additional nonnegative tolerance is added to the right hand side of the inequality in \eqref{eq:appr-sol_bregman} which is $\tau$-upper summable.
This extension of the error condition will be useful in the analysis of the regularized ADMM  class of
Section~\ref{sec:amal} with ADMM stepsize $\theta>1$.

Section~\ref{sec:imp.hpe} presents and establishes the strong pointwise  iteration-complexity of a 
regularized NE-HPE framework which solves
the inclusion $0 \in T(z)$ where $T$ is maximal monotone. The latter framework is based on the idea of  invoking  the
above  NE-HPE variant to solve perturbed MIPs of the form $0 \in (S+T)(z)$ where $S(\cdot)= \mu [ \nabla w (\cdot) - \nabla w(z_0)]$ for some $\mu>0$, point $z_0$
and regular distance generating function $w$.

Section \ref{sec:amal} presents and establishes the ${\cal O} (\rho^{-1}\log (\rho^{-1}))$ strong pointwise iteration-complexity of
a regularized ADMM class whose description depends  on $\beta$, $\theta$ (as the standard ADMM) and a regularization parameter $\mu$.
It is well-known that \eqref{optl1} can be reformulated as a monotone inclusion problem of the form $0 \in T(z)$ with $z=(s,y,x)$.
The regularized ADMM class can be viewed as a special instance of the regularized NE-HPE framework applied to the latter inclusion where:
i)  all stepsizes $\lambda_k$'s are equal one; ii) the distance generating function $w$ depends on $\beta$, $\theta$ and operator $C$ as in relation~\eqref{df:norm_admm};
and, iii) the sequence of $\tau$-upper summable errors is
zero when the ADMM stepsize $\theta \in (0,1)$
and nontrival (and hence nonzero) when $\theta \in [1,(1+\sqrt{5})/2)$.
Hence, the iteration complexity analysis of the regularized ADMM class  for the case in which $\theta \in [1,(1+\sqrt{5})/2)$ requires both
a combination of relative and $\tau$-upper summable errors while the one for the case of $\theta \in (0,1)$ requires only relative errors.
Moreover, the distance generating function $w$ is strongly convex only when $C$ is injective but is always regular and hence fulfills the conditions required for the
iteration-complexity results of Section \ref{sec:imp.hpe} to hold. 

This paper is organized as follows. 
 Subsection~\ref{sec:bas} presents the notation and review some basic concepts about convexity
and maximal monotone operators.
 Section~\ref{sec:smhpe}  introduces the  class of regular distance generating functions and 
presents  the aforementioned variant of the NE-HPE framework.
Section~\ref{sec:imp.hpe} presents the regularized NE-HPE framework and its complexity analysis.
Section~\ref{sec:amal} contains two subsections. 
Subsection~\ref{subsec:Admm1} describes the regularized ADMM class and  its  pointwise iteration-complexity result whose proof is
given in  Subsection~\ref{subsec:proofAdmm}.
Finally, the appendix reviews some basic  results about dual seminorms and existence of optimal solutions and/or Lagrange multipliers for
linearly constrained convex programs, and presents the proofs of one result of Section~\ref{sec:smhpe} and two results of Subsection~\ref{subsec:proofAdmm}.

\subsection{Basic concepts and notation}
\label{sec:bas}

This subsection presents some definitions, notation and terminology  needed by our presentation.

The set of real numbers is denoted by $\mathbb{R}$. The set of non-negative real numbers  and 
the set of positive real numbers are denoted by $\R_+$ and $\R_{++}$, respectively. 
For $t>0$, we let $\log^+(t):=\max\{\log t,0\}$.

Let  $\Z$ be a finite-dimensional 
real vector space with
inner product denoted by $\inner{\cdot}{\cdot}$ and let $\|\cdot\|$ denote an arbitrary
seminorm in $\Z$.  Its dual (extended) seminorm, denoted by $\|\cdot\|^*$, is defined as
 $\|\cdot\|^*:=\sup\{ \inner{\cdot}{z}:\|z\|\leq 1\}$.  
 Some basic properties of the dual seminorm are given in Proposition~\ref{propdualnorm} in Appendix~\ref{basiresul}. The interior and the relative interior of a set $U\subset \Z$ are denoted, respectively, by $\mbox{int}(U)$ and $ \mbox{ri}(U)$ (see for example pp. 43-44 of \cite{Rockafellar70} for their definitions).

Given a set-valued operator $S:\Z\tos \Z$,
its domain is denoted by
 $\mbox{Dom}(S):=\{z\in \Z\,:\, S(z)\neq \emptyset\}$
 and  its {inverse} operator $S^{-1}:\Z\tos \Z$ is given by
$S^{-1}(v):=\{z\;:\; v\in S(z)\}$.
The operator $S$ is said to be   monotone if 
\[
\inner{z-z'}{s-s'}\geq 0\quad \forall \; z,z'\in Z, \forall \;s\in S(z), \forall \; s'\in S(z').
\]
Moreover, $S$ is maximal monotone if it is monotone and, additionally, if $T$ is a monotone operator such that $S(z)\subset T(z)$ for every $z\in \Z$  then $S=T$.
The sum $ S+T:\Z\tos \Z$ of two set-valued 
operators $S,\, T:\Z\tos \Z$ is defined by
$
(S+T)(x):=
\{a+b\in \Z\,:\,a\in S(x),\; b\in T(x)\}
$
for every $z\in \Z$.
Given a scalar $\varepsilon\geq0$, the 
 {$\varepsilon$-enlargement} $T^{[\varepsilon]}:\Z\tos \Z$
 of a monotone operator $T:\Z\tos \Z$ is defined as
\begin{align}
\label{eq:def.eps}
 T^{[\varepsilon]}(z)
 :=\{v\in \Z\,:\,\inner{v-v'}{z-z'}\geq -\varepsilon,\;\;\forall z' \in \Z, \forall\; v'\in T(z')\} \quad \forall z \in \Z.
\end{align}

Recall that the 
{$\varepsilon$-subdifferential} of a 
 convex function $f:\Z\to  [-\infty,\infty]$
is defined by
$\partial_{\varepsilon}f(z):=\{v\in \Z\,:\,f(z')\geq f(z)+\inner{v}{z'-z}-\varepsilon\;\;\forall z'\in \Z\}$ for every $z\in \Z$.
When $\varepsilon=0$, then $\partial_0 f(x)$ 
is denoted by $\partial f(x)$
and is called the {subdifferential} of $f$ at $x$.
The operator $\partial f$ is trivially monotone if $f$ is proper.
If $f$ is a proper lower semi-continuous convex function, then
$\partial f$ is maximal monotone~\cite{Rockafellar}.
The domain of $f$ is denoted by $\dom f$ and 
the conjugate of $f$ is the function
$f^*:\Z \to [-\infty,\infty]$ defined as
\[
f^*(v) = \sup_{z \in \Z} \left(\inner{v}{z} - f(z) \right)\quad \forall v \in \Z.
\]

\section
{A non-Euclidean HPE framework for  a special class of MIPs}
\label{sec:smhpe}
This section describes and derives convergence rate bounds for a non-Euclidean HPE framework for solving inclusion problems
consisting of the sum of two maximal monotone operators, one of which is
assumed to be $\mu$-monotone with respect to a Bregman distance for some $\mu>0$.
The latter concept implies strong monotonicity of the operator when the Bregman distance is nondegenerate, i.e.,
its associated distance generating function is strongly monotone.  However, it should be noted that when the Bregman distance
is degenerate, the latter concept does not imply strong monotonicity of the operator.

We start by introducing the definition of a distance generating function and its corresponding
Bregman distance adopted in this paper.

\begin{definition}\label{defw0}
A proper lower semi-continuous convex function $w : \Z  \to [-\infty,\infty]$ is called a distance generating function
if   $\inte(\dom w) = \Dom (\partial w) \neq \emptyset$ and $w$ is continuously differentiable on this interior.
Moreover, $w$ induces the Bregman distance $dw: \Z \times \inte(\dom w) \to \mathbb{R}$ defined as
\begin{equation}\label{def_d}
(dw)(z';z) := w(z')-w(z)-\langle \nabla w(z),z'-z\rangle  \quad \forall (z', z) \in \Z \times \inte(\dom w).
\end{equation}
\end{definition}
For simplicity, for every $z \in \inte(\dom w)$,  the function $(dw)(\,\cdot\,;z)$ will be denoted by $(dw)_{z}$ so that
\[
(dw)_{z}(z')=(dw)(z';z) \quad \forall (z',z) \in \Z \times \inte(\dom w).
\]
The following useful identities follow straightforwardly from \eqref{def_d}:
\begin{align}
\nabla (dw)_{z}(z') &= - \nabla (dw)_{z'}(z) = \nabla w(z') - \nabla w(z) \quad \forall z, z' \in \inte(\dom w), \label{grad-d} \\
\label{equacao_taylor}
(dw)_{v}(z') - (dw)_{v}(z) &= \langle \nabla (dw)_{v}(z), z'-z\rangle + (dw)_{z}(z') \quad \forall z' \in \Z, \, \forall v  , z \in \inte(\dom w).
\end{align}

Our analyses of the non-Euclidean HPE frameworks presented in Sections \ref{sec:smhpe} and \ref{sec:imp.hpe}
require an extra property of the distance generating function, namely, that of being regular
with respect to a seminorm.

\begin{definition}\label{def:assu}
Let distance generating function $w:\Z \to [-\infty,\infty]$, seminorm $\|\cdot\|$ in $\Z$ and convex set $Z \subset \inte(\dom w)$
be given.
For given positive constants $m$ and $M$,   $w$ is said to be $(m,M)$-regular with respect to $(Z,\|\cdot\|)$ if 
\begin{align}\label{strongly}
 (dw)_{z}(z')\geq \frac{{m}}{2}\|z-z'\|^2 \quad \forall  z,z' \in  Z,
\end{align}
\begin{equation}\label{a1}
\|\nabla w(z)-\nabla w(z')\|^*\leq {M}\|z-z'\| \quad \forall z, z' \in   Z.
\end{equation}
\end{definition}

Note that if the seminorm in Definition \ref{def:assu} is a norm, then \eqref{strongly}  implies that
$w$ is strongly convex, in which case the corresponding $dw$ is said to be nondegenerate.
However, since we are not necessarily assuming that $\|\cdot\|$ is a norm, our approach includes the
case of $w$ being not strongly convex, or equivalently, $dw$ being degenerate
(e.g., see Example~\ref{arth2}(b) below).

It is worth pointing out that if  $w:\Z \to [-\infty,\infty]$ is  $(m,M)$-regular with respect to $(Z,\|\cdot\|)$, 
then
\begin{equation}\label{lipsc}
\frac{m}{2}\|z-z'\|^2\leq (d{w})_{z}(z')  \leq \frac{M}{2}\|z-z'\|^2  \quad \forall z,z' \in Z.
\end{equation}

Some examples of regular distance generating functions are as follows.

\begin{example}\label{arth2}
a) The distance generating function  $ w: \Z \to \mathbb{R}$ 
defined by $ w(\cdot):=\inner{\cdot}{\cdot}/2$ is a $(1,1)$-regular with respect to 
$(\Z,\|\cdot\|)$ where $\|\cdot\|:=\inner{\cdot}{\cdot}^{1/2}$.
 \\[2mm]
 b)  Let  $A:\Z \to {\Z}$ be  a self-adjoint positive semidefinite linear operator. 
 The distance generating function  $ w: \Z \to \mathbb{R}$ defined by 
 $ w(\cdot):=\inner{A(\cdot)}{\cdot}/2$ is a $(1,1)$-regular with respect to 
 $(\Z,\|\cdot\|)$ where $\|\cdot\|:=\inner{A(\cdot)}{\cdot}^{1/2}$.
 \\[2mm]
c) Let $\delta\in(0,1]$ be given  and  define  $W:=\{x\in \mathbb{R}^n:  x_i+\delta/n>0, \, \forall i=1,\ldots,n\}$.
Let the distance generating function  $w:\R^n \to [-\infty,\infty]$   defined by  
  $ w(x):=\sum_{i=1}^{n}(x_i+\delta/n)\log(x_i+\delta/n)$ for every $x \in W$ and $w(x):=\infty$ otherwise.
  Then, $w$ is 
a $(1/(1+\delta),n/\delta)$-regular with respect to 
 $(Z,\|\cdot\|_1)$ where $Z=\{x\in \mathbb{R}^n: \sum_{i=1}^{n} x_i=1, \, x_i\geq 0, \, i=1,\ldots,n\}$.
\end{example}

The following result gives some useful properties of regular distance generating functions.

\begin{lemma}\label{basicassu}
Let  $w:\Z \to [-\infty,\infty]$ be an $(m,M)$-regular distance generating function with respect to $(Z,\|\cdot\|)$ as in Definition \ref{def:assu}.
Then,
\begin{equation}\label{eq:56}
\left(1+\frac{1}{t}\right)^{-1} (dw)_{z}(z') \leq \frac{M}{m} \left[ (dw)_{z}(\tilde z)+t(dw)_{\tilde z}(z') \right]   \quad \forall t >0, \;   \forall\, z, z', \tilde{z} \in Z;
\end{equation}
\begin{equation}\label{eq:789}
\|\nabla(dw)_{z'}(z)\|^*\leq  \frac{\sqrt{2}M}{\sqrt{m}} [(dw)_{z'}(z)]^{1/2} \quad \forall\, z, z' \in Z.
\end{equation}
\end{lemma}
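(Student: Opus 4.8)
The plan is to treat the two inequalities separately, since \eqref{eq:789} is essentially immediate from the regularity definition while \eqref{eq:56} reduces, after passing from Bregman distances to seminorms, to an elementary metric inequality.

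For \eqref{eq:789} I would start from the gradient identity \eqref{grad-d}, which gives $\nabla (dw)_{z'}(z) = \nabla w(z) - \nabla w(z')$, so that the Lipschitz-type bound \eqref{a1} yields $\|\nabla (dw)_{z'}(z)\|^* \leq M\|z - z'\|$. The lower regularity estimate \eqref{strongly} (equivalently, the left inequality of \eqref{lipsc}) gives $\|z - z'\| \leq \sqrt{2/m}\,[(dw)_{z'}(z)]^{1/2}$; substituting this into the previous bound produces exactly $\|\nabla (dw)_{z'}(z)\|^* \leq (\sqrt 2 M/\sqrt m)\,[(dw)_{z'}(z)]^{1/2}$, as claimed.

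For \eqref{eq:56}, writing $(1 + 1/t)^{-1} = t/(1+t)$, I would first use the two-sided bound \eqref{lipsc} to convert all Bregman distances into seminorms: the upper bound controls the left-hand term by $\tfrac{t}{1+t}\cdot\tfrac M2\|z - z'\|^2$, while the lower bounds on $(dw)_z(\tilde z)$ and $(dw)_{\tilde z}(z')$ show that the right-hand side dominates $\tfrac M2\bigl(\|z - \tilde z\|^2 + t\|\tilde z - z'\|^2\bigr)$. After cancelling the common factor $M/2$, the claim \eqref{eq:56} follows once I establish the purely metric inequality
\begin{equation*}
\frac{t}{1+t}\,\|z - z'\|^2 \leq \|z - \tilde z\|^2 + t\,\|\tilde z - z'\|^2 \qquad \forall\, t > 0.
\end{equation*}
Setting $\alpha = \|z - \tilde z\|$ and $\beta = \|\tilde z - z'\|$ and invoking the triangle inequality $\|z - z'\| \leq \alpha + \beta$, it suffices to check $\tfrac{t}{1+t}(\alpha + \beta)^2 \leq \alpha^2 + t\beta^2$; clearing the positive denominator $1+t$, this is equivalent to $(\alpha - t\beta)^2 \geq 0$, which is trivially true.

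The only genuinely nontrivial point is recognizing that \eqref{eq:56} collapses to the seminorm inequality above and then to the perfect square $(\alpha - t\beta)^2 \geq 0$; everything else is a mechanical application of \eqref{grad-d}, \eqref{a1} and \eqref{lipsc}. A subtlety worth flagging is that $\|\cdot\|$ is only a seminorm, so I would keep the entire argument at the level of the triangle inequality and the two regularity estimates, none of which require positive definiteness of $\|\cdot\|$.
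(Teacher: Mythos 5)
Your proposal is correct and follows essentially the same route as the paper: \eqref{eq:789} via \eqref{grad-d}, \eqref{a1} and \eqref{strongly}, and \eqref{eq:56} by reducing to the seminorm inequality $\tfrac{t}{1+t}\|z-z'\|^2 \le \|z-\tilde z\|^2 + t\|\tilde z - z'\|^2$ through \eqref{lipsc} and the triangle inequality. The only cosmetic difference is that the paper justifies this last inequality by quoting the value of the minimization problem $\min\{\gamma_1^2+t\gamma_2^2 : \gamma_1+\gamma_2\ge\|z-z'\|\}$, whereas you verify it directly as the perfect square $(\alpha-t\beta)^2\ge 0$; these are the same computation.
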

\proof
To show \eqref{eq:56}, let $t>0$ and $z,  z', \tilde z \in Z$ be given.  By \eqref{strongly}, we have
\begin{align}\label{a3}
(dw)_{z}(\tilde z)+t(dw)_{\tilde z}(z')\geq  \frac{m}{2}\left(\|z-\tilde z\|^2+t\|\tilde z-z'\|^2\right).
\end{align}
Using the fact that
 \begin{align*}
    \min_{\gamma_1,\gamma_2}\{\gamma_1^2+t\gamma_2^2\;|\; \gamma_1,\gamma_2 \geq 0,\;
    \gamma_1 + \gamma_2 \geq \|z-z'\| \}= (1+1/t)^{-1} \|z-z'\|^2
 \end{align*}
 and $(\gamma_1, \gamma_2) = (\|z-\tilde z\|,\|\tilde z-z'\|)$ is a feasible point for the above problem,
 we then conclude that
 \[
 \|z-\tilde z\|^2+t\|\tilde z-z'\|^2\geq(1+1/t)^{-1} \|z-z'\|^2
 \]
 which, together with the second inequality in \eqref{lipsc} 
and  \eqref{a3}, immediately  yields  \eqref{eq:56}.
Finally, it is easy to see that \eqref{eq:789} immediately follows from
\eqref{grad-d},  \eqref{strongly} and \eqref{a1}. 
\endproof

Throughout this section, we assume that, for some positive scalars $m$ and $M$,
 $w:\Z \to [-\infty,\infty]$ is an $(m,M)$-regular distance generating function 
with respect to $(Z,\|\cdot\|)$ where $Z \subset \inte(\dom w)$ is a convex set and 
$\|\cdot\|$ is a seminorm in~$\Z$.
Our problem of interest in this section is the MIP
\begin{align}\label{eq:inc.p}
 0\in (S+T)(z)
\end{align}
where
$S,T:\Z\tos \Z$ are point-to-set operators satisfying the following conditions:
\begin{itemize}
\item[\bf A0)] $S$ and $T$ are maximal monotone and $\Dom(T)\subset Z$;  
\item[\bf A1)]  $S$ is $\mu$-monotone on $Z$ with respect to $w$, i.e., there exists a constant $\mu>0$ such that  
\begin{align}\label{eq:sm.c}
 \inner{z-z'}{s-s'}\geq \mu[ (dw)_{z}(z')+ (dw)_{z'}(z)] \quad \forall \, z,z' \in Z, \forall \,s\in S(z), \forall \, s'\in S(z');
\end{align}
\item[\bf A2)]  the solution set $(S+T)^{-1}(0)$ of~\eqref{eq:inc.p} is nonempty. 
\end{itemize}


We observe that when the seminorm $\|\cdot\|$ is a norm, then
\eqref{eq:sm.c} implies that  $S$ is strongly monotone. 
 However, the latter needs not be the case when $\|\cdot\|$ is not a norm.

We now state a non-Euclidean-HPE (NE-HPE) framework  for solving \eqref{eq:inc.p}  which generalizes the ones studied in  \cite{Oliver,SvaiterBregman}.

\mgap
\mgap

\noindent
\fbox{
\begin{minipage}[h]{6.4 in}
{\bf Framework~1} { (A NE-HPE variant for solving \eqref{eq:inc.p})}.
\begin{itemize}
\item[(0)] Let $z_0 \in Z$, $\eta_0\geq 0$,   $\sigma \in [0, 1)$, $\tau \in (0,1)$ and $\lambda\in \R_{++}$ be given, and set $k=1$;
\item[(1)] choose $\lambda_k\geq \lambda$ and find $(\tilde{z}_k, z_k, \varepsilon_k,\eta_k) \in Z \times Z \times \mathbb{R}_{+}\times \mathbb{R}_{+}$   such that 
       \begin{align}
& r_k:= \frac{1}{\lambda_k} \nabla (dw)_{z_k}(z_{k-1})  \in \left(S+T^{[\varepsilon_k]}\right)(\tz_k), \label{subpro} \\      
& (dw)_{z_k}({\tz}_k) + \lambda_k\varepsilon_k +\eta_k \leq \sigma (dw)_{z_{k-1}}({\tz}_k)+(1-\tau)\eta_{k-1}; \label{cond1}
\end{align}
       
\item[(2)] set $k\leftarrow k+1$ and go to step 1.
\end{itemize}
\noindent
{\bf end}
\end{minipage}
}

\vgap
\vgap

We now make some remarks about Framework~1. First, it does not specify
how to find $\lambda_k$ and $(\tilde{z}_k, z_k, \varepsilon_k,\eta_k)$ satisfying (\ref{subpro}) and (\ref{cond1}). The particular 
scheme for computing $\lambda_k$ and $(\tilde{z}_k, z_k, \varepsilon_k,\eta_k)$ will depend on the instance of the framework under consideration
and the properties of the operators  $S$ and $T$.
Second, if $w$ is strongly convex on $Z$, $\sigma= 0$ and $\eta_0=0$, then (\ref{cond1}) implies that $\varepsilon_k= 0$, $\eta_k=0$ and $z_k = \tilde z_k$ for every~$k$, and
hence that $r_k \in (S+T)(z_k)$ in view of \eqref{subpro}.
Therefore, the HPE error conditions \eqref{subpro}-\eqref{cond1} can be viewed as a relaxation of an iteration of the exact non-Euclidean proximal point method,
namely,
\begin{equation} \label{eq:exact-prox}
0 \in \frac{1}{\lambda_k} \nabla (dw)_{z_{k-1}}(z_{k})  +  (S+T)({z}_k).
\end{equation}
Third, if $w$ is strongly convex on $Z$ and $S+T$ is maximal monotone, then Proposition~\ref{existence} with $T=\lambda_k(S+T)$ and  $\hat z=z_{k-1}$
implies
 that the above inclusion has a unique solution $z_k$,
and hence that, for any given $\lambda_k>0$, there exists a quadruple $(\tz_k,z_k,\varepsilon_k,\eta_k)$ of the form $(z_k,z_k,0,0)$ satisfying
(\ref{subpro})-(\ref{cond1}) with $\sigma=0$ and $\eta_{k-1}=0$. 
Considering inexact quadruples (i.e., those satisfying the HPE relative error conditions (\ref{subpro})-(\ref{cond1}) with $\sigma>0$)
other than an exact one (i.e., one of the form $(z_k,z_k,0,0)$ where $z_k$ is a solution of \eqref{eq:exact-prox}) has important implications, namely:
i) the resulting HPE framework contains a variety of methods for convex programming, variational inequalities and saddle points
as special instances (see for example \cite{YHe2,YHe1,monteiro2010complexity,monteiro2011complexity,monteiro2010iteration,Sol-Sv:hy.ext}) and;
ii) it provides much greater computational flexibility since finding the exact quadruple is impossible for most MIPs.
Fourth, even though  the definition of a regular distance generating function
   does not exclude the case in which
$w$ is constant, such a case is not interesting from an algorithmic analysis point of view. In fact, if $\eta_0=0$ and
$w$ is constant, then we have that $\tilde z_1$ is already a solution of~\eqref{eq:inc.p} since it follows from \eqref{cond1} with $k=1$ that $\varepsilon_1=0$, and hence that
$0 \in (S+T)(\tilde z_1)$ in view of~\eqref{subpro} with $k=1$.
Fifth, the more general HPE error condition \eqref{cond1} is clearly equivalent to
\[
(dw)_{z_k}(\tilde z_k) + \lambda_k \varepsilon_k \le \sigma (dw)_{z_{k-1}}(\tilde z_k) + \tilde \eta_k
\]
where $\tilde \eta_k = (1-\tau) \eta_{k-1}-\eta_k$. The consideration of this additional error $\{\tilde \eta_k\}$ will be useful in the analysis of the regularized 
ADMM class studied in Section \ref{sec:amal}.
Observe that  $\{\tilde \eta_k\}$ is $\xi$-upper summable, i.e.,
\[
\limsup_{k \to \infty} \sum_{j=1}^{k} \frac{ \tilde \eta_j}{(1-\xi)^j}  < \infty
\]
for any $\xi \in [0,\tau]$, since nonnegativity of $\{\eta_k\}$ implies
\[
\sum_{j=1}^{k} \frac{ \tilde \eta_j}{(1-\xi)^j} \leq  \sum_{j=1}^{k}  \left( \frac{\eta_{j-1}}{(1-\xi)^{j-1}}-\frac{\eta_{j}}{(1-\xi)^{j}} \right) 
= \eta_0 - \frac{\eta_{k}}{(1-\xi)^{k}} \le \eta_0 \quad \forall k \ge 1.
\]

We now make some remarks about the relationship of Framework 1 with the ones studied in \cite{Oliver,Maicon, SvaiterBregman}.
First, Framework 1 with $S=0$ and $\{\eta_k\}$ identically zero reduces to the one studied in \cite{Oliver} and also to the one in \cite{SvaiterBregman} if
$\{\varepsilon_k\}$ is chosen to be identically zero. However, unless $w$ is constant, condition {\bf A1} does not allow us to take $S=0$,
and hence the convergence rate results of this section do not apply to the setting of \cite{Oliver},
and hence of \cite{SvaiterBregman}.
Second, in contrast to \cite{Oliver},
the regularity condition on $w$ and the $\mu$-monotonicity of $S$ with respect to $w$
allow us to establish a geometric (pointwise) convergence
rate for the sequence $\{(dw)_{z_k}(z^*) + \eta_k\}$ for any $z^* \in (S+T)^{-1}(0)$ (see Proposition~\ref{lm:tech2}  below).
Third, when $w$ is the usual Euclidean distance generating function as in Example~\ref{arth2}(a) and $\{\eta_k\}$ is identically
zero,  Framework 1 and the corresponding results derived in this section reduce to the ones studied in
Subsection~2.2 of \cite{Maicon}.

We also remark that the special case of Framework 1 in which  $S(\cdot)=\mu \nabla (dw)_{z_0}(\cdot)$ for some $z_0 \in Z$ and
$\mu>0$ sufficiently small will be used in Section~\ref{sec:imp.hpe}
as a way towards solving the inclusion $0 \in T(z)$. The resulting framework can then be viewed as a regularized
NE-HPE framework in the sense that the operator $T$ is slightly perturbed and regularized by
the operator $\mu \nabla (dw)_{z_0}(\cdot)$.

In the remaining part of this section, we focus our attention on establishing  
convergence rate bounds  for the sequence $\{(dw)_{z_k}(z^*) + \eta_k\}$ and the
sequence of residual pairs $\{(r_k,\varepsilon_k)\}$ generated by any instance of Framework~1.
We start by deriving a preliminary technical result.

\begin{lemma}\label{lema_desigualdades}
For every $k \geq 1$, the following statements hold:
\begin{itemize}
\item[(a)] for every $z \in \dom w$, we have
\[(dw)_{z_{k-1}}(z) - (dw)_{z_k} (z) = (dw)_{z_{k-1}}(\tilde{z}_k) - (dw)_{z_k} (\tilde{z}_k) + \lambda_k \langle r_k, \tilde{z}_k - z \rangle;\]
\item[(b)]
 for every $z \in \dom w$, we have
\[(dw)_{z_{k-1}}(z) - (dw)_{z_k}(z)+(1-\tau)\eta_{k-1} \geq (1 - \sigma)(dw)_{z_{k-1}}(\tilde{z}_k) + \lambda_k (\langle r_k, \tilde{z}_k - z\rangle + \varepsilon_k)+\eta_{k};\]
\item[(c)] for every  $z^* \in (S+T)^{-1}(0)$, we have
\[(dw)_{z_{k-1}}(z^*) - (dw)_{z_k}(z^*)+(1-\tau)\eta_{k-1} \geq (1 - \sigma)(dw)_{z_{k-1}}(\tilde{z}_k) + \lambda_k \mu (dw)_{\tilde{z}_k}(z^*)+\eta_{k}.
\]
\item[(d)] for every  $z^* \in (S+T)^{-1}(0)$, we have
\[  (1 - \sigma)(d{w})_{z_{k-1}}(\tilde{z}_k)\leq (d{w})_{z_{k-1}}(z^*)+\eta_{k-1}, \quad  (dw)_{z_{k}}(\tilde z_{k})  \leq  \frac{1}{1 - \sigma}\left[(dw)_{z_{k-1}}(z^*)+\eta_{k-1}\right],
\]
\end{itemize}
\end{lemma}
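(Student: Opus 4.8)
The plan is to prove the four items in order, bootstrapping each from its predecessor together with the two defining relations of Framework~1, the inclusion \eqref{subpro} and the error bound \eqref{cond1}; items (a) and (b) are essentially identities, (c) is where the structural assumptions {\bf A0}--{\bf A1} are consumed, and (d) is routine nonnegativity bookkeeping. For (a) I would rely solely on the three-point identity \eqref{equacao_taylor}: writing it with outer base point $v=z_{k-1}$ and then $v=z_k$, in each case with inner base point $\tilde z_k$ and evaluation point $z$, and subtracting the two, the common term $(dw)_{\tilde z_k}(z)$ cancels and leaves the single inner product $\langle \nabla(dw)_{z_{k-1}}(\tilde z_k)-\nabla(dw)_{z_k}(\tilde z_k),\, z-\tilde z_k\rangle$. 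By \eqref{grad-d} this gradient difference telescopes to $\nabla w(z_k)-\nabla w(z_{k-1})=-\nabla(dw)_{z_k}(z_{k-1})$, which is exactly $-\lambda_k r_k$ by the definition of $r_k$ in \eqref{subpro}; rearranging yields (a).

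I would obtain (b) from (a) by adding $(1-\tau)\eta_{k-1}$ to both sides and then substituting \eqref{cond1}, rewritten as $(1-\tau)\eta_{k-1}-(dw)_{z_k}(\tilde z_k)\ge \lambda_k\varepsilon_k+\eta_k-\sigma(dw)_{z_{k-1}}(\tilde z_k)$, for the term $-(dw)_{z_k}(\tilde z_k)$ appearing in (a). This substitution is what converts the coefficient of $(dw)_{z_{k-1}}(\tilde z_k)$ into $1-\sigma$ and surfaces the extra terms $\lambda_k\varepsilon_k$ and $\eta_k$.

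Item (c) is the crux and the step I expect to be the main obstacle. Taking $z=z^*$ in (b), everything reduces to proving the single inequality $\langle r_k, \tilde z_k-z^*\rangle+\varepsilon_k\ge \mu(dw)_{\tilde z_k}(z^*)$. For this I would decompose $r_k=\tilde s_k+\tilde t_k$ with $\tilde s_k\in S(\tilde z_k)$ and $\tilde t_k\in T^{[\varepsilon_k]}(\tilde z_k)$ using \eqref{subpro}, and choose $s^*\in S(z^*)$, $t^*\in T(z^*)$ with $s^*+t^*=0$ from $z^*\in(S+T)^{-1}(0)$ (note $z^*\in\Dom(T)\subset Z$ by {\bf A0}, so all the distances below are defined and nonnegative). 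The $\mu$-monotonicity \eqref{eq:sm.c} lower-bounds $\langle \tilde s_k-s^*, \tilde z_k-z^*\rangle$ by $\mu[(dw)_{\tilde z_k}(z^*)+(dw)_{z^*}(\tilde z_k)]$, while the defining property \eqref{eq:def.eps} of the enlargement, applied to $\tilde t_k\in T^{[\varepsilon_k]}(\tilde z_k)$ against $t^*\in T(z^*)$, gives $\langle \tilde t_k-t^*, \tilde z_k-z^*\rangle\ge -\varepsilon_k$. Summing the two bounds, using $s^*+t^*=0$ to annihilate the cross terms, and discarding the nonnegative $\mu(dw)_{z^*}(\tilde z_k)$ delivers the inequality; feeding it back into (b) gives (c). The delicate point is precisely the correct splitting of $r_k$ and the pairing of each piece with the matching monotonicity and enlargement inequality.

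Finally, I would read off (d) from (c) and \eqref{cond1} via nonnegativity. Rearranging (c) exhibits $(dw)_{z_{k-1}}(z^*)+(1-\tau)\eta_{k-1}$ as an upper bound for a sum of nonnegative terms that includes $(1-\sigma)(dw)_{z_{k-1}}(\tilde z_k)$; combined with $(1-\tau)\eta_{k-1}\le \eta_{k-1}$, valid since $\tau\in(0,1)$ and $\eta_{k-1}\ge 0$, this gives the first inequality. For the second, dropping $\lambda_k\varepsilon_k$ and $\eta_k$ and bounding $(1-\tau)\eta_{k-1}\le \eta_{k-1}$ in \eqref{cond1} yields $(dw)_{z_k}(\tilde z_k)\le \sigma(dw)_{z_{k-1}}(\tilde z_k)+\eta_{k-1}$; inserting the first inequality of (d) produces $\frac{\sigma}{1-\sigma}[(dw)_{z_{k-1}}(z^*)+\eta_{k-1}]+\eta_{k-1}$, which is at most $\frac{1}{1-\sigma}[(dw)_{z_{k-1}}(z^*)+\eta_{k-1}]$ since the gap between the two equals the nonnegative quantity $(dw)_{z_{k-1}}(z^*)$.
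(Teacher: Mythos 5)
Your proposal is correct and follows essentially the same route as the paper: items (b)--(d) coincide with the paper's argument, and item (c) uses exactly the paper's decomposition $r_k = r_k^a + r_k^b$ paired with the $\mu$-monotonicity condition {\bf A1} and the enlargement inequality \eqref{eq:def.eps}. The only (cosmetic) difference is in (a), where you apply \eqref{equacao_taylor} twice with inner base point $\tilde z_k$ and subtract, while the paper expands around $z_k$ and regroups; both reduce to the same use of \eqref{grad-d} and the definition of $r_k$.
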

\proof 
(a) Using (\ref{equacao_taylor}) twice and using the definition of $r_k$ given by (\ref{subpro}), we obtain
\begin{align*}
 (dw)_{z_{k-1}}(z) - (dw)_{z_k}(z) &= (dw)_{z_{k-1}}(z_k) + \langle \nabla (dw)_{z_{k-1}}(z_k), z - z_k\rangle \\
& = (dw)_{z_{k-1}}(z_k) + \langle \nabla (dw)_{z_{k-1}}(z_k), \tilde{z}_k - z_k\rangle + \langle \nabla (dw)_{z_{k-1}}(z_k), z - \tilde{z}_k\rangle \\
& = (dw)_{z_{k-1}}(\tilde{z}_k) - (dw)_{z_k}(\tilde{z}_k) + \langle \nabla (dw)_{z_{k-1}}(z_k), z - \tilde{z}_k\rangle \\
& = (dw)_{z_{k-1}}(\tilde{z}_k) - (dw)_{z_k}(\tilde{z}_k) + \lambda_k\langle r_k, \tilde{z}_k - z\rangle \quad \forall z \in \dom w.
\end{align*}

(b) This statement follows as an immediate consequence of (a) and (\ref{cond1}).

(c) 
Let $z^* \in (S+T)^{-1}(0)$. Then, there exists $a^*\in Z$ such that 
 $a^*  \in S(z^*)$ and $- a^*  \in T(z^*)$. In view of \eqref{subpro}, we can write 
 $r_k$ as $r_k=r_k^a+r_k^b$ where $r_k^a\in S(\tz_k)$ and $r_k^b\in T^{ \varepsilon_k}(\tz_k)$. Since $z^*, \tz_k \in Z$,
$a^* \in S(z^*)$ and $r_k^a\in S(\tz_k)$,  condition {\bf A1} implies  that  $\inner{r_k^a-a^*}{\tilde z_k-z^*}\geq \mu (dw)_{\tilde z_k}(z^*)$.
On the other hand,  since $-a^* \in T(z^*)$ and $r_k^b\in T^{ \varepsilon_k}(\tz_k)$,
  \eqref{eq:def.eps} implies  that $\inner{r_k^b+a^*}{\tilde z_k-z^*}\geq -\varepsilon_k$.
Hence, 
\begin{equation}\label{a390}
 \langle r_k, \tilde{z}_k - z^*\rangle + \varepsilon_k= \langle r_k^a-a^*, \tilde{z}_k - z^*\rangle +  \langle r_k^b+a^*, \tilde{z}_k - z^*\rangle + \varepsilon_k \geq \mu (dw)_{\tilde z_k}(z^*).
\end{equation}
Statement (b) with $z=z^*$ together wih the previous inequality then yield  (c).

(d) The first inequality of this statement follows directly from (c). Now, since 
$(dw)_{z_{k}}(\tilde z_{k})   \leq \sigma (dw)_{z_{k-1}}({\tz}_k)+\eta_{k-1}
 $ (see  \eqref{cond1}), the second inequality of this statement follows from the first one and
 the fact that $\sigma \in [0, 1)$.
\endproof

Under the assumption
that the sequence of stepsizes $\{\lambda_k\}$
is bounded away from zero, the following  result shows that the sequence $\{dw_{z_k}(z^*)+\eta_{k}\}$ converges geometrically to zero
for every solution $z^*$ of \eqref{eq:inc.p}. 

\begin{proposition}\label{lm:tech2}
Let $\mu$ be as in   {\bf A1}  and define
\begin{equation}\label{tau}
\underline{\tau}:=\min \left\{\frac{m}M\left(\frac{1}{1-\sigma}+\frac{1}{\mu \lambda}\right)^{-1} ,\tau\right\} \in (0,1).
\end{equation}
 Then, for  every $z^* \in (S+T)^{-1}(0)$ and $k > \ell \ge 0$, we have:
\begin{equation}\label{eq:094}
(dw)_{z_{k}}(z^*)+\eta_{k} \leq (1-\underline{\tau})^{k-\ell}\left[(dw)_{z_{\ell}}(z^*)+\eta_{\ell}\right], 
\end{equation}
\begin{equation}\label{eq:093}
\| \nabla(d{w})_{z^* }(\tilde z_k)\|^* \leq \frac{\sqrt{2}M}{\sqrt{m}}\left[1+ \frac{1}{\sqrt{1-\sigma}}\right](1-\underline{\tau})^{(k-\ell-1)/2}[(dw)_{z_{\ell}}(z^*)+\eta_{\ell}]^{1/2}. 
\end{equation}
\end{proposition}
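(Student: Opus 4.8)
The plan is to derive \eqref{eq:094} from a single geometric decay estimate
\[
(dw)_{z_k}(z^*) + \eta_k \le (1-\underline\tau)\left[(dw)_{z_{k-1}}(z^*) + \eta_{k-1}\right] \qquad \forall k \ge 1,
\]
and then to iterate it from index $\ell$ up to index $k$; inequality \eqref{eq:093} will then follow from a triangle-inequality decomposition of $\nabla(dw)_{z^*}(\tilde z_k)$ combined with \eqref{eq:789} and the already-established \eqref{eq:094}. Throughout I use that $z_{k-1},\tilde z_k \in Z$ by construction and $z^* \in (S+T)^{-1}(0) \subset \Dom(T) \subset Z$ by {\bf A0}, so the regularity estimates of Lemma~\ref{basicassu} apply.

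To obtain the one-step decay, I would start from Lemma~\ref{lema_desigualdades}(c) and regroup the terms into the quantities $(dw)_{z_j}(z^*)+\eta_j$, which gives
\[
\left[(dw)_{z_{k-1}}(z^*) + \eta_{k-1}\right] - \left[(dw)_{z_{k}}(z^*) + \eta_{k}\right] \ge \tau\eta_{k-1} + (1-\sigma)(dw)_{z_{k-1}}(\tilde z_k) + \lambda_k\mu\, (dw)_{\tilde z_k}(z^*).
\]
The crux is to bound the last two terms below by a multiple of $(dw)_{z_{k-1}}(z^*)$. For this I would invoke \eqref{eq:56} with $z=z_{k-1}$, $z'=z^*$, $\tilde z = \tilde z_k$ and the particular choice $t = \lambda_k\mu/(1-\sigma)$. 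After multiplying through by $(1-\sigma)$ the weighted combination on the left becomes exactly $(1-\sigma)(dw)_{z_{k-1}}(\tilde z_k) + \lambda_k\mu(dw)_{\tilde z_k}(z^*)$, and the coefficient on the right simplifies, yielding
\[
(1-\sigma)(dw)_{z_{k-1}}(\tilde z_k) + \lambda_k\mu\,(dw)_{\tilde z_k}(z^*) \ge \frac{m}{M}\left(\frac{1}{1-\sigma} + \frac{1}{\lambda_k\mu}\right)^{-1}(dw)_{z_{k-1}}(z^*).
\]
Using $\lambda_k \ge \lambda$ to replace $\lambda_k$ by $\lambda$ and recalling \eqref{tau}, the right-hand coefficient is at least $\underline\tau$; since also $\tau\eta_{k-1}\ge\underline\tau\,\eta_{k-1}$, the two preceding displays combine into the claimed one-step contraction, and iterating it yields \eqref{eq:094}.

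For \eqref{eq:093}, I would write, via \eqref{grad-d}, the decomposition $\nabla(dw)_{z^*}(\tilde z_k) = \nabla(dw)_{z_{k-1}}(\tilde z_k) + \nabla(dw)_{z^*}(z_{k-1})$, apply the triangle inequality for the dual seminorm, and bound each summand by \eqref{eq:789} (using that $\|\nabla(dw)_{z'}(z)\|^* = \|\nabla(dw)_{z}(z')\|^*$ to orient the second term). This produces a bound in terms of $[(dw)_{z_{k-1}}(\tilde z_k)]^{1/2}$ and $[(dw)_{z_{k-1}}(z^*)]^{1/2}$. The first inequality of Lemma~\ref{lema_desigualdades}(d) controls the former by $(1-\sigma)^{-1}[(dw)_{z_{k-1}}(z^*)+\eta_{k-1}]$, while the latter is $\le (dw)_{z_{k-1}}(z^*)+\eta_{k-1}$ by nonnegativity of $\eta_{k-1}$; together these yield the factor $1+(1-\sigma)^{-1/2}$. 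Finally, applying \eqref{eq:094} with $k-1$ in place of $k$ converts $[(dw)_{z_{k-1}}(z^*)+\eta_{k-1}]^{1/2}$ into $(1-\underline\tau)^{(k-\ell-1)/2}[(dw)_{z_\ell}(z^*)+\eta_\ell]^{1/2}$, giving \eqref{eq:093}.

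The main obstacle is the coefficient bookkeeping in the one-step contraction: one must choose the free parameter in \eqref{eq:56} as $t=\lambda_k\mu/(1-\sigma)$ precisely so that the weighted Bregman terms produced by Lemma~\ref{lema_desigualdades}(c) recombine into $(dw)_{z_{k-1}}(z^*)$ with coefficient $\frac{m}{M}\big(\frac{1}{1-\sigma}+\frac{1}{\lambda_k\mu}\big)^{-1}$, which is exactly the expression appearing in the definition \eqref{tau} of $\underline\tau$ after the substitution $\lambda_k\ge\lambda$. Once this parameter identification is made, the remaining work — the triangle inequality, the symmetry of the dual seminorm, and the monotonicity/nonnegativity of $\{\eta_k\}$ — is routine.
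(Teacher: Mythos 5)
Your proposal is correct and follows essentially the same route as the paper's own proof: the one-step contraction is obtained from Lemma~\ref{lema_desigualdades}(c) combined with \eqref{eq:56} at the same parameter choice $t=\mu\lambda_k/(1-\sigma)$, followed by $\lambda_k\ge\lambda$ and the definition \eqref{tau}, while \eqref{eq:093} uses the identical triangle-inequality decomposition via \eqref{grad-d}, \eqref{eq:789}, Lemma~\ref{lema_desigualdades}(d) and \eqref{eq:094} at index $k-1$. The only difference is that you spell out the intermediate bookkeeping (the regrouping into $\tau\eta_{k-1}$ and the simplification of the coefficient) that the paper leaves implicit.
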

\proof 
Let $  z^* \in (S+T)^{-1}(0)$  be given. 
It follows  from Lemma~\ref{lema_desigualdades}(c) and inequality \eqref{eq:56}    with 
 $t=\mu\lambda_k/(1-\sigma)$, $z=z_{k-1}$, $\tilde z=\tilde z_k$ and $z'=z^*$  that 
\begin{align*}
(dw)_{z_{k}}(z^*)+\eta_{k} &\leq \left(1 -\frac{m}M\left(\frac{1}{1-\sigma}+\frac{1}{\mu\lambda_k}\right)^{-1}\right)(dw)_{z_{k-1}}(z^*)+(1-\tau)\eta_{k-1}\\
&\leq \left(1 -\underline{\tau}\right)\left[(dw)_{z_{k-1}}(z^*)+\eta_{k-1}\right], \quad \forall k > 0
\end{align*}
where the second inequality is due to the fact that $\lambda_k\geq  \lambda$ for all $k$ and  the definition of $\underline{\tau}$ 
in~\eqref{tau}.
Clearly,  \eqref{eq:094} follows from last inequality.
Now, using \eqref{grad-d}, inequality \eqref{eq:789}  and the triangle inequality, we have
\begin{align*}
\| \nabla(d{w})_{z^* }(\tilde z_k)\|^* & \leq 
\| \nabla(d{w})_{z_{k-1} }( z^*)\|^*+\| \nabla(d{w})_{ z_{k-1}}(\tilde z_k)\|^*\\
&\leq  \frac{\sqrt{2}M}{\sqrt{m}}\left[ ( (d{w})_{z_{k-1} }( z^*))^{1/2}+((d{w})_{z_{k-1}}(\tilde z_k))^{1/2}\right]
\end{align*}
which, together with \eqref{eq:094} with $k=k-1$, the first inequality of Lemma~\ref{lema_desigualdades}(d) and
 the fact that $\eta_{k-1}\geq 0$, yield \eqref{eq:093}.
\endproof

The next result derives convergence rate bounds for the sequences $\{r_k\}$ and $\{\varepsilon_k\}$ generated by
an instance of Framework~1.
\begin{proposition} \label{pr:mmm} 
 Let $\underline{\tau}$ be as defined in \eqref{tau}. 
Then,
 for every $k\geq 1$, $r_k\in (S+T^{[\varepsilon_k]})(\tilde z_k)$
 and the convergence rate bounds hold
\begin{equation}\label{ad90}
 \|r_k\|^* \le   \frac{2\sqrt{2}M}{\lambda\sqrt{m}}(1-\underline{\tau})^{(k-1)/2}
 \sqrt{\underline{d}_0+\eta_0},
   \qquad
\varepsilon_k \leq   \frac{ 1 }{\lambda(1 - \sigma)}(1 - \underline{\tau})^{k-1}\left[\underline{d}_0+\eta_0\right]
\end{equation}
where   $\underline{d}_0:=\inf \{(d{w})_{z_0}(z): z \in (S+T)^{-1}(0)\}$.
\end{proposition}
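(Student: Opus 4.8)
The plan is to handle the three assertions in turn, leaning on the geometric decay of Proposition~\ref{lm:tech2} and the pointwise estimates of Lemma~\ref{lema_desigualdades}(d). The inclusion $r_k\in(S+T^{[\varepsilon_k]})(\tilde z_k)$ is exactly \eqref{subpro}, so it requires no argument; the work lies entirely in the two decay bounds.

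For the residual estimate I would first use the definition of $r_k$ in \eqref{subpro} together with $\lambda_k\ge\lambda$ to write $\|r_k\|^*=\lambda_k^{-1}\|\nabla(dw)_{z_{k-1}}(z_k)\|^*\le\lambda^{-1}\|\nabla(dw)_{z_{k-1}}(z_k)\|^*$. The crucial idea is to route the triangle inequality through an arbitrary solution $z^*\in(S+T)^{-1}(0)$ rather than through $\tilde z_k$: by \eqref{grad-d} one has the exact splitting $\nabla(dw)_{z_{k-1}}(z_k)=\nabla(dw)_{z_{k-1}}(z^*)+\nabla(dw)_{z^*}(z_k)$, and since $z^*\in\Dom(T)\subset Z$ by {\bf A0} while $z_{k-1},z_k\in Z$, I may apply \eqref{eq:789} to each leg (using $\|\nabla(dw)_{z^*}(z_k)\|^*=\|\nabla(dw)_{z_k}(z^*)\|^*$, again from \eqref{grad-d}). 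This bounds $\|\nabla(dw)_{z_{k-1}}(z_k)\|^*$ by $(\sqrt2 M/\sqrt m)\{[(dw)_{z_{k-1}}(z^*)]^{1/2}+[(dw)_{z_k}(z^*)]^{1/2}\}$. I would then feed in \eqref{eq:094} with $\ell=0$, which gives $(dw)_{z_j}(z^*)+\eta_j\le(1-\underline{\tau})^{j}[(dw)_{z_0}(z^*)+\eta_0]$ for $j=k-1,k$; discarding the nonnegative $\eta_j$ and bounding both powers by $(1-\underline{\tau})^{(k-1)/2}$, the sum of square roots is at most $2(1-\underline{\tau})^{(k-1)/2}[(dw)_{z_0}(z^*)+\eta_0]^{1/2}$. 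Since the resulting bound on $\|r_k\|^*$ holds for every $z^*$ while its left side is independent of $z^*$, I finally pass to the infimum over $z^*\in(S+T)^{-1}(0)$; monotonicity of $t\mapsto t^{1/2}$ converts $\inf_{z^*}(dw)_{z_0}(z^*)$ into $\underline d_0$, producing the advertised constant $2\sqrt2 M/(\lambda\sqrt m)$ with no residual dependence on $\sigma$.

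For the bound on $\varepsilon_k$ I would start from \eqref{cond1}, drop the nonnegative terms $(dw)_{z_k}(\tilde z_k)$ and $\eta_k$ and use $1-\tau\le1$, to obtain $\lambda_k\varepsilon_k\le\sigma(dw)_{z_{k-1}}(\tilde z_k)+\eta_{k-1}$. The first inequality of Lemma~\ref{lema_desigualdades}(d) bounds $\sigma(dw)_{z_{k-1}}(\tilde z_k)$ by $\tfrac{\sigma}{1-\sigma}[(dw)_{z_{k-1}}(z^*)+\eta_{k-1}]$; combining this with $\eta_{k-1}\le(dw)_{z_{k-1}}(z^*)+\eta_{k-1}$ and simplifying $\tfrac{\sigma}{1-\sigma}+1=\tfrac{1}{1-\sigma}$ yields $\lambda_k\varepsilon_k\le(1-\sigma)^{-1}[(dw)_{z_{k-1}}(z^*)+\eta_{k-1}]$. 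Invoking \eqref{eq:094} with $\ell=0$ once more, using $\lambda_k\ge\lambda$, and taking the infimum over $z^*$ then delivers $\varepsilon_k\le[\lambda(1-\sigma)]^{-1}(1-\underline{\tau})^{k-1}[\underline d_0+\eta_0]$.

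The main obstacle is securing the $r_k$ bound with exactly the stated constant. The naive route splitting the gradient through $\tilde z_k$ would force one to estimate $(dw)_{z_{k-1}}(\tilde z_k)$ and $(dw)_{z_k}(\tilde z_k)$ via Lemma~\ref{lema_desigualdades}(d), each of which injects a factor $(1-\sigma)^{-1}$ and hence an inferior $\sigma$-dependent constant. Recognizing that the triangle inequality should instead be routed through a solution $z^*$—so that one uses only the clean geometric estimate \eqref{eq:094}, not the $\sigma$-loaded bounds of Lemma~\ref{lema_desigualdades}(d)—is precisely what produces the $\sigma$-free constant; the remaining subtlety is the passage to the infimum over $(S+T)^{-1}(0)$ at the very end, which is what converts the fixed-$z^*$ estimates into bounds in terms of $\underline d_0$.
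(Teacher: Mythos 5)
Your proof is correct and follows essentially the same route as the paper's: the residual bound is obtained by splitting $\nabla(dw)_{z_k}(z_{k-1})$ through an arbitrary solution $z^*$, applying \eqref{eq:789} to each piece and then \eqref{eq:094} with $\ell=0$ before passing to the infimum over $(S+T)^{-1}(0)$, and the $\varepsilon_k$ bound combines \eqref{cond1} with the first inequality of Lemma~\ref{lema_desigualdades}(d) and \eqref{eq:094} in exactly the paper's manner. The only cosmetic difference is that the paper retains the factor $(1-\tau)$ on $\eta_{k-1}$ slightly longer before discarding it, which does not affect the final constants.
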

\begin{proof}
 The first statement of the proposition  follows  from  \eqref{subpro}. 
Let  $z^* \in (S+T)^{-1}(0)$ be given. 
Using \eqref{grad-d},  \eqref{subpro}, $\lambda_k\geq  \lambda>0$,  the triangle inequality and inequality \eqref{eq:789}, we have
\begin{align*}
 \|r_k\|^*=  \frac{1}{ \lambda_k}\| \nabla (dw)_{z_k}(z_{k-1})\|^* &\leq  
 \frac{1}{ \lambda}\left [\| \nabla (dw)_{z_{k}}(z^{*})\|^*+\| \nabla (dw)_{z_{k-1}}(z^*)\|^*\right]\\
  &\leq   \frac{\sqrt{2}M}{\lambda\sqrt{m}}
\left[  ((dw)_{z_{k}}(z^{*}))^{1/2}+((dw)_{z_{k-1}}(z^*))^{1/2}\right]
\end{align*}
which combined with \eqref{eq:094} with $\ell=0$ yields 
\[
 \|r_k\|^*  \leq    \frac{\sqrt{2}M}{\lambda\sqrt{m}}\left[1+(1-\underline{\tau})^{1/2}\right](1-\underline{\tau})^{(k-1)/2}\left[(dw)_{z_{0}}(z^*)+\eta_{0}\right]^{1/2}.
\]
As $\underline{\tau}\in (0,1]$ (see \eqref{tau}) and $z^*$ is an arbitrary point in $(S+T)^{-1}(0)$, the bound on $r_k$ follows from the definition of $\underline{d}_0$.
Now,    
since $\lambda_k\geq  \lambda>0$, it follows from  \eqref{cond1}  that
\[ \lambda\varepsilon_k  \leq \lambda_k\varepsilon_k \leq \sigma (dw)_{z_{k-1}}({\tz}_k)+(1-\tau)\eta_{k-1}.
 \]
 On the other hand, Lemma~\ref{lema_desigualdades}(c) implies that
\[
 (1 - \sigma) (dw)_{z_{k-1}}({\tz}_k)\leq (dw)_{z_{k-1}}(z^*) +(1-\tau)\eta_{k-1}.
 \]
 Combining the last two  inequalities and the fact that $\sigma \in [0, 1)$, we obtain 
 \[ \lambda\varepsilon_k  \leq \frac{\sigma}{1 - \sigma}(dw)_{z_{k-1}}(z^*)+\frac{1-\tau}{1 - \sigma}\eta_{k-1}\leq \frac{1}{1 - \sigma}\left[(dw)_{z_{k-1}}(z^*)+(1-\tau)\eta_{k-1} \right],
 \]
which together with \eqref{eq:094} with $\ell=0$ and the fact that  $\tau >0$  imply that
\[ \varepsilon_k \leq  \frac{ (1 - \underline{\tau})^{k-1} }{ \lambda(1 - \sigma)} [(dw)_{z_{0}}(z^*)+\eta_0]. \]
Since $z^*$ is arbitrary point in $(S+T)^{-1}(0)$, the bound on $\varepsilon_k$ follows from the definition of $\underline{d}_0$.
\end{proof}

Proposition \ref{pr:mmm} gives convergence rate bounds on the last triple $(\tz_k,r_k,\varepsilon_k)$ generated by Framework 1.
The next result whose proof is given in Appendix~\ref{proofpropergodic} gives convergence rate bounds on the ergodic triple obtained by averaging the triples $(\tz_i,r_i,\varepsilon_i)$
from $i=\ell+1$ to $i=k$ where $k > \ell \ge 0$. More specifically, for $k> \ell \geq 0$,  define
$\Lambda_{\ell,k} := \sum_{i=\ell+1}^k \lambda_i$ and the $(\ell,k)$-ergodic triple $(\tilde z^a_{\ell,k},r^a_{\ell,k},\varepsilon^a_{\ell,k})$ as
\begin{equation}\label{SeqErg}
\tilde z^a_{\ell,k} := \frac{1}{\Lambda_{\ell,k}}\sum_{i=\ell+1}^k \lambda_i \tilde z_i, \quad
r^a_{\ell,k} := \frac{1}{\Lambda_{\ell,k}}\sum_{i=\ell+1}^k \lambda_i r_i, \quad
\varepsilon^a_{\ell,k} := \frac{1}{\Lambda_{\ell,k}} \sum_{i=\ell+1}^k \lambda_i \left( \varepsilon_i + \inner{r_i}{\tilde z_i -\tilde z^a_{l,k}} \right).
\end{equation}

\begin{proposition} \label{prop:ergodic}
 Let $\underline{\tau}$ be as defined in \eqref{tau}. 
Then,
 for every $k> \ell \geq 0$,
 \begin{equation}\label{qer10}
  \varepsilon^a_{\ell,k}\geq 0, \quad r^a_{\ell,k} \in \left(S+ T\right)^{[\varepsilon^a_{\ell,k}]}(\tilde z^a_{l,k})
  \end{equation}
 and the following convergence rate bounds hold:
\begin{equation}\label{ad901}
 \|r^a_{\ell,k}\|^* \le \frac{2\sqrt{2}M}{ \Lambda_{\ell,k}\sqrt{m}}(1 - \underline{\tau})^{\ell/2}\left[\underline{d}_0+\eta_0\right]^{1/2},
   \qquad
\varepsilon^a_{\ell,k} \leq \frac{9 M}{ m (1 - \sigma)\Lambda_{\ell,k} }
(1 - \underline{\tau})^{\ell}\left[\underline{d}_0+\eta_0\right]
\end{equation}
where   $\underline{d}_0:=\inf \{(d{w})_{z_0}(z): z \in (S+T)^{-1}(0)\}$. 
\end{proposition}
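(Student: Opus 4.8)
The plan is to dispatch the two assertions in \eqref{qer10} by a transportation property of $\varepsilon$-enlargements, and then establish the two rate bounds in \eqref{ad901} by separate telescoping arguments: the one for $r^a_{\ell,k}$ is immediate, while the one for $\varepsilon^a_{\ell,k}$ is the delicate part. For \eqref{qer10} I would first strengthen the inclusion \eqref{subpro}: since $S$ is monotone, the defining inequality \eqref{eq:def.eps} gives $(S+T^{[\varepsilon_i]})(\tilde z_i)\subset (S+T)^{[\varepsilon_i]}(\tilde z_i)$, so that $r_i\in (S+T)^{[\varepsilon_i]}(\tilde z_i)$ for each $i$. I would then invoke the transportation (convexity) formula for the $\varepsilon$-enlargement of the monotone operator $S+T$ (cf.\ \cite{Bu-Iu-Sv:teps}) with the convex weights $\alpha_i=\lambda_i/\Lambda_{\ell,k}$; this simultaneously yields $r^a_{\ell,k}\in (S+T)^{[\varepsilon^a_{\ell,k}]}(\tilde z^a_{\ell,k})$ and $\varepsilon^a_{\ell,k}\ge 0$, once one checks that the aggregate error $\sum_i\alpha_i\inner{r_i-r^a_{\ell,k}}{\tilde z_i-\tilde z^a_{\ell,k}}$ produced by that formula equals the expression defining $\varepsilon^a_{\ell,k}$ in \eqref{SeqErg} (a one-line computation using $\sum_i\alpha_i(\tilde z_i-\tilde z^a_{\ell,k})=0$).

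For the bound on $r^a_{\ell,k}$, the key observation is that \eqref{subpro} together with \eqref{grad-d} give $\lambda_i r_i=\nabla(dw)_{z_i}(z^*)-\nabla(dw)_{z_{i-1}}(z^*)$ for any fixed $z^*\in Z$, so the defining sum telescopes to $\Lambda_{\ell,k}r^a_{\ell,k}=\nabla(dw)_{z_k}(z^*)-\nabla(dw)_{z_\ell}(z^*)$. Applying the triangle inequality for the dual seminorm, then \eqref{eq:789}, then the decay estimate \eqref{eq:094} with base $\ell=0$, and finally $(1-\underline{\tau})^{k/2}\le (1-\underline{\tau})^{\ell/2}$, yields $\Lambda_{\ell,k}\|r^a_{\ell,k}\|^*\le (2\sqrt{2}M/\sqrt{m})(1-\underline{\tau})^{\ell/2}[(dw)_{z_0}(z^*)+\eta_0]^{1/2}$ for every $z^*\in (S+T)^{-1}(0)$; taking the infimum over $z^*$ produces the first bound in \eqref{ad901}.

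The main work is the bound on $\varepsilon^a_{\ell,k}$. Setting $z=\tilde z^a_{\ell,k}$ in Lemma~\ref{lema_desigualdades}(b), discarding the nonnegative term $(1-\sigma)(dw)_{z_{i-1}}(\tilde z_i)$, and summing over $i=\ell+1,\dots,k$ makes the Bregman terms telescope and the $\eta$-terms collapse, giving $\Lambda_{\ell,k}\varepsilon^a_{\ell,k}\le (dw)_{z_\ell}(\tilde z^a_{\ell,k})+(1-\tau)\eta_\ell$. It then remains to bound $(dw)_{z_\ell}(\tilde z^a_{\ell,k})$ geometrically. Here I would pass to the seminorm through the upper estimate in \eqref{lipsc}, $(dw)_{z_\ell}(\tilde z^a_{\ell,k})\le (M/2)\|z_\ell-\tilde z^a_{\ell,k}\|^2$, and use the triangle inequality with convexity of the average, $\|z_\ell-\tilde z^a_{\ell,k}\|\le \|z_\ell-z^*\|+\Lambda_{\ell,k}^{-1}\sum_i\lambda_i\|\tilde z_i-z^*\|$. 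Each norm is controlled by the lower estimate in \eqref{strongly}: one has $\|z_\ell-z^*\|\le (2/m)^{1/2}(dw)_{z_\ell}(z^*)^{1/2}$, and, splitting $\|\tilde z_i-z^*\|\le \|\tilde z_i-z_{i-1}\|+\|z_{i-1}-z^*\|$ and invoking the first inequality of Lemma~\ref{lema_desigualdades}(d), $\|\tilde z_i-z^*\|\le (1+(1-\sigma)^{-1/2})(2/m)^{1/2}[(dw)_{z_{i-1}}(z^*)+\eta_{i-1}]^{1/2}$. Applying \eqref{eq:094} (base $\ell=0$) to every Bregman term, factoring out $(1-\underline{\tau})^{\ell/2}$, and using $\Lambda_{\ell,k}^{-1}\sum_{i=\ell+1}^{k}\lambda_i(1-\underline{\tau})^{(i-1-\ell)/2}\le 1$ to collapse the weighted sum, gives $\|z_\ell-\tilde z^a_{\ell,k}\|\le (2+(1-\sigma)^{-1/2})(2/m)^{1/2}(1-\underline{\tau})^{\ell/2}[(dw)_{z_0}(z^*)+\eta_0]^{1/2}$. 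Squaring, multiplying by $M/2$, absorbing $(1-\tau)\eta_\ell$ into the leading term via $M/(m(1-\sigma))\ge 1$, and taking the infimum over $z^*$ then delivers the second bound in \eqref{ad901}.

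I expect the only genuine obstacle to be the constant bookkeeping in this last step: one must route the two opposite-direction Bregman distances $(dw)_{z_\ell}(\cdot)$ and $(dw)_{\tilde z_i}(z^*)$ through the two-sided regularity estimate \eqref{strongly}--\eqref{lipsc} while keeping only a single factor of $M/m$ and of $(1-\sigma)^{-1}$, and simultaneously tame the weighted geometric sum so that the $i=\ell+1$ term controls the decay. Everything else in the proof is telescoping and monotonicity.
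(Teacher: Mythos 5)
Your treatment of \eqref{qer10} and of the bound on $\|r^a_{\ell,k}\|^*$ is correct and coincides with the paper's: the weak transportation formula of \cite{Bu-Iu-Sv:teps} applied to $(S+T)^{[\varepsilon_i]}$ with weights $\lambda_i/\Lambda_{\ell,k}$, and the telescoping identity $\Lambda_{\ell,k}\, r^a_{\ell,k}=\nabla(dw)_{z_k}(z_\ell)$ followed by the triangle inequality through $z^*$, \eqref{eq:789} and \eqref{eq:094}. For $\varepsilon^a_{\ell,k}$, your starting inequality $\Lambda_{\ell,k}\varepsilon^a_{\ell,k}\le (dw)_{z_\ell}(\tilde z^a_{\ell,k})+(1-\tau)\eta_\ell$ is also the paper's (obtained from Lemma~\ref{lema_desigualdades}(b) by summation). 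You then deviate mildly: you estimate $\|z_\ell-\tilde z^a_{\ell,k}\|$ through the averaged seminorms and split $\|\tilde z_i-z^*\|$ through $z_{i-1}$ (using the first inequality of Lemma~\ref{lema_desigualdades}(d)), whereas the paper uses convexity of $(dw)_{z_\ell}(\cdot)$ to reduce to $\max_i(dw)_{z_\ell}(\tilde z_i)$ and splits each term through $z_i$ and $z^*$ (using the second inequality of that lemma). Both routes are legitimate and of comparable effort.

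The one step that does not close as written is the final absorption of $(1-\tau)\eta_\ell$. Writing $c:=(1-\sigma)^{-1/2}\ge 1$, your chain yields $(dw)_{z_\ell}(\tilde z^a_{\ell,k})\le \frac{M}{m}(2+c)^2(1-\underline{\tau})^{\ell}\,[(dw)_{z_0}(z^*)+\eta_0]$ and $(1-\tau)\eta_\ell\le (1-\underline{\tau})^{\ell}\,[(dw)_{z_0}(z^*)+\eta_0]$, hence the overall factor $\frac{M}{m}(2+c)^2+1$. Comparing with the target $\frac{9M}{m}c^2$ requires $\frac{4M}{m}(2c+1)(c-1)\ge 1$, which fails when $\sigma$ is near $0$ and $M/m$ near $1$ (e.g.\ $\sigma=0$, $M=m$ gives $10>9$): at $c=1$ your leading term already saturates $9c^2$, so ``$M/(m(1-\sigma))\ge 1$'' provides no slack. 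The paper avoids this by carrying $\eta_\ell$ on the left from the outset and bounding $(dw)_{z_\ell}(\tilde z_i)+\eta_\ell\le \frac{3M}{m}\bigl[(dw)_{z_i}(\tilde z_i)+(dw)_{z_i}(z^*)+(dw)_{z_\ell}(z^*)+\eta_\ell\bigr]\le \frac{3M}{m}\bigl(\frac{1}{1-\sigma}+2\bigr)\bigl[(dw)_{z_\ell}(z^*)+\eta_\ell\bigr]$, so the additive $\eta_\ell$ is multiplied by $3M/m\ge 3$ and absorbed with room to spare. Everything else in your argument is sound; either adopt this rearrangement or accept the marginally larger constant $10$ in place of $9$.
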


We end this section by making two remarks about Proposition \ref{prop:ergodic}. First, Proposition \ref{pr:mmm} implies  Proposition \ref{prop:ergodic}
when $\ell =k-1$ and yields a slightly better bound on $\varepsilon_k$.
Second, under the assumption that $\max\{ M, m^{-1} \}  = {\cal O}(1)$, Proposition \ref{prop:ergodic} with $\ell=0$ implies that
\[
\|r^a_{\ell,k}\|^* = {\cal O} \left(  \frac{1}{k \lambda} \left[\underline{d}_0+\eta_0\right]^{1/2} \right), \quad
\varepsilon^a_{\ell,k} = {\cal O} \left(  \frac{1}{k \lambda} \left[\underline{d}_0+\eta_0\right] \right)
\]
and with  $\ell=\lceil k/2\rceil$ implies that
\[
\|r^a_{\ell,k}\|^* = {\cal O} \left(  \frac{1}{k \lambda} (1-\underline{\tau})^{k/4} \left[\underline{d}_0+\eta_0\right]^{1/2} \right), \quad
\varepsilon^a_{\ell,k} = {\cal O} \left(  \frac{1}{k \lambda}  (1-\underline{\tau})^{k/2} \left[\underline{d}_0+\eta_0\right] \right).
\]
Hence, the $(\lceil k/2\rceil,k)$-ergodic triple has the property that
$k(r^a_{\ell,k},\varepsilon^a_{\ell,k})$ converges to $0$ geometrically.

\section{{A regularized  NE-HPE 
framework for solving MIPs}}
\label{sec:imp.hpe}

This section describes and establishes the pointwise iteration-complexity of a regularized NE-HPE framework for solving MIPs
which, specialized to the case of the Euclidean Bregman distance and error sequence $\{\eta_k\}$ identically zero,
reduces to the regularized HPE framework of \cite{Maicon}.
The latter framework has been shown in \cite{Maicon} to have
better iteration-complexity than the one for the usual HPE framework derived in \cite{monteiro2010complexity}.
Moreover, the derived pointwise iteration-complexity bound for the case of a general Bregman distance is, up to a logarithm factor,
the same as the ergodic iteration-complexity bound for the standard NE-HPE method obtained
in \cite{Oliver}.

Our problem  of interest in this section is the MIP
\begin{align} \label{eq:in}
 0\in T(z)
\end{align}
where $T:\Z\tos \Z$ is a maximal monotone operator such that
 the solution set $T^{-1}(0)$ of~\eqref{eq:in} is nonempty. 


We also assume in this section that, for some positive scalars $m$ and $M$,
 $w:\Z \to [-\infty,\infty]$ is an $(m,M)$-regular distance generating function 
with respect to $(Z,\|\cdot\|)$ where $Z \subset \inte(\dom w)$ is a convex set such that $\Dom(T)\subset Z$ and 
$\|\cdot\|$ is a seminorm in~$\Z$.
The regularized NE-HPE framework solves \eqref{eq:in} based on the idea of solving the regularized MIP
\begin{align} \label{eq:inre}
 0\in T(z)+\mu \nabla (d{w})_{z_0}(z)
\end{align}
for a fixed  $z_0\in Z$   and a sufficiently small $\mu>0$.
Hence, we also assume that  the solution set of~\eqref{eq:inre}
\begin{equation}\label{defZu}
Z^*_{\mu}:=\{z\in Z: \; 0 \; \in \; T(z)+\mu \nabla(d{w})_{z_0}(z)\}
\end{equation}
  is nonempty for every $\mu>0$.  We remark that if 
$w$ is strongly convex on $Z$, then Proposition~\ref{existence} with $T=(1/\mu)T$ and $\hat z=z_0$
implies that the latter assumption holds.

Note that \eqref{eq:inre} is a special case of~\eqref{eq:inc.p}
with $S(\cdot)=\mu \nabla (d{w})_{z_0}(\cdot)$, and from  the above assumptions the operators $S$ and $T$ satisfy   {\bf A0} and {\bf A2}.
Moreover,  this operator $S$ together with $w$ and $Z$  satisfies {\bf A1}.  Indeed, 
 using the definition of $S$ and  \eqref{grad-d}, we conclude that for every $z, z' \in   Z$, 
\begin{align*}
 \inner{S(z)-S(z')}{z-z'} &= \mu \inner{\nabla (d{w})_{z_0}(z)- \nabla (d{w})_{z_0}(z')}{z-z'}\\
 & = \mu \inner{\nabla (d{w})_{z'}(z)}{z-z'}=  \mu [(dw)_{z}(z')+(dw)_{z'}(z) ]
\end{align*}
 where the last equality is due to \eqref{equacao_taylor} with $v=z'$.
 Hence, we can use any instance of  Framework~1 with $S(\cdot)=\mu \nabla (d{w})_{z_0}(\cdot)$ to approximately solve the regularized inclusion  \eqref{eq:inre}, 
 and hence \eqref{eq:in} when $\mu>0$ is sufficiently small.

For every  $\mu>0$, define
\begin{align}  \label{eq:dmuxmu}
 d_0:=\inf_{z \in T^{-1}(0)} (d{w})_{z_0}(z), \quad   d_\mu:=\inf_{z \in Z^*_{\mu} } (d{w})_{z_0}(z).
\end{align}
The following simple result  establishes a crucial relationship between $d_0$ and $d_\mu$. 
\begin{lemma}\label{lm:dz}
For any $\mu>0$ and $z^*_\mu \in Z^*_\mu$, there holds  $(dw)_{z_0}(z^*_\mu)\leq d_0$. As a consequence, $d_\mu\leq d_0$.
\end{lemma}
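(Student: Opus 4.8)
The plan is to show directly that every point $z^*_\mu \in Z^*_\mu$ is at least as close to $z_0$, in the Bregman sense measured from $z_0$, as any solution of $0 \in T(z)$, and then to pass to the infimum. Fix $z^*_\mu \in Z^*_\mu$ and let $z^* \in T^{-1}(0)$ be arbitrary; note that both points lie in $Z$ since $\Dom(T) \subset Z$ and $Z^*_\mu \subset Z$ by \eqref{defZu}.

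First I would exploit the defining inclusion of $Z^*_\mu$ together with monotonicity of $T$. By \eqref{defZu}, the vector $-\mu \nabla(dw)_{z_0}(z^*_\mu)$ belongs to $T(z^*_\mu)$, while $0 \in T(z^*)$. Applying the monotonicity of $T$ to these two pairs and dividing by $\mu > 0$ yields
\[
\langle \nabla(dw)_{z_0}(z^*_\mu),\, z^* - z^*_\mu \rangle \geq 0.
\]
Next I would invoke the three-point identity \eqref{equacao_taylor} with $v = z_0$, $z = z^*_\mu$ and $z' = z^*$, which gives
\[
(dw)_{z_0}(z^*) - (dw)_{z_0}(z^*_\mu) = \langle \nabla(dw)_{z_0}(z^*_\mu),\, z^* - z^*_\mu \rangle + (dw)_{z^*_\mu}(z^*).
\]
On the right-hand side the inner-product term is nonnegative by the previous step, and the Bregman term $(dw)_{z^*_\mu}(z^*)$ is nonnegative by \eqref{strongly} (both arguments lying in $Z$). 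Hence $(dw)_{z_0}(z^*_\mu) \leq (dw)_{z_0}(z^*)$.

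Since $z^* \in T^{-1}(0)$ was arbitrary, taking the infimum over $T^{-1}(0)$ and using \eqref{eq:dmuxmu} gives $(dw)_{z_0}(z^*_\mu) \leq d_0$, which is the first assertion. The consequence $d_\mu \leq d_0$ then follows at once: taking the infimum of the just-established inequality over all $z^*_\mu \in Z^*_\mu$ (a nonempty set) and recalling the definition of $d_\mu$ in \eqref{eq:dmuxmu} yields $d_\mu \leq d_0$.

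Since all ingredients — monotonicity of $T$, the three-point identity, and nonnegativity of the Bregman distance — are already available, there is no serious obstacle here. The only points requiring care are the bookkeeping of signs when applying monotonicity (so that the factor $-\mu$ is divided out in the right direction) and choosing the correct substitution in \eqref{equacao_taylor}, so that the leftover cross term is \emph{exactly} the quantity shown to be nonnegative by the monotonicity step.
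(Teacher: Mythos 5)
Your proof is correct and follows essentially the same route as the paper's: the inclusion $-\mu\nabla(dw)_{z_0}(z^*_\mu)\in T(z^*_\mu)$ combined with monotonicity of $T$ against $0\in T(z^*)$, followed by the three-point identity \eqref{equacao_taylor} with $v=z_0$, $z=z^*_\mu$, $z'=z^*$, and nonnegativity of the leftover Bregman term. The only cosmetic difference is that you justify $(dw)_{z^*_\mu}(z^*)\geq 0$ via \eqref{strongly}, which the paper leaves implicit; both are fine.
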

\begin{proof}
Let $\mu>0$ and  $z^*_\mu \in Z^*_{\mu}$ be given. Clearly,  $-\mu \nabla(d{w})_{z_0}(z^*_\mu)\in T(z^*_\mu)$.
Hence, monotonicity of $T$ implies that any $z^* \in T ^{-1}(0)$ satisfies
%
 $\inner{ \nabla(d{w})_{z_0}(z^*_\mu))}{z^*-z^*_\mu }\geq 0 $.
%
The latter conclusion and relation \eqref{equacao_taylor} with $v=z_0$, $z'=z^*$ and $z=z_\mu^*$ then imply that
\[
(d{w})_{z_0}(z^*) - (d{w})_{z_0}(z_\mu^*)= \langle \nabla (d{w})_{z_0}(z^*_\mu), z^*-z^*_\mu\rangle + (d{w})_{z^*_\mu}(z^*)\geq 0.
\]
As $z^*\in T ^{-1}(0) $ is arbitrary, the first part of the lemma  follows from the definition of $d_0$. The second part of the lemma follows from the first one and the definition 
of $d_\mu$.
\end{proof}

 Note that, in view of Proposition~\ref{pr:mmm},  any instance of Framework~1 applied to \eqref{eq:inre}  generates a triple $(\tilde z_k,r_k,\varepsilon_k)$ such that
 \[
 \tilde r_k := r_k -\mu \nabla (d{w})_{z_0}(\tz_k) \in T^{[\varepsilon_k]}(\tilde z_k) 
 \]
and the residual pair $(r_k,\varepsilon_k)$ satisfies \eqref{ad90} with $\underline{d}_0=d_{\mu}$, and hence converges to $0$.
Even though the sequence $\tilde r_k$ does not necessarily converge to $0$, it can be made sufficiently
small, i.e., $\|\tilde r_k\|^* \le \rho$ for some tolerance $\rho>0$, by choosing $\mu=\rho/{\cal O}(d_0)$.
Indeed, we will show later that there exists
$\D_0 = {\cal O}(d_0)$ such that $\|\nabla (d{w})_{z_0}(\tz_k)\|^*\le \D_0$ for every $k$.
Hence, choosing $\mu = \rho/\D$ for some $\D \ge 2 \D_0$
and computing a triple $(\tilde z_k,r_k,\varepsilon_k)$ such that $\|r_k\|^* \le \rho/2$ guarantees that
\[
\|\tilde r_k\|^* \le \|r_k\|^* +\mu \| \nabla (d{w})_{z_0}(\tz_k)\|^* \le \frac{\rho}2 + \mu \D_0 = \rho \left( \frac{1}2 +  \frac{\D_0}\D \right) \le  \rho,
\]
and hence that $\tilde r_k$ is a sufficiently small residual for \eqref{eq:in}. Moreover, Proposition \ref{pr:mmm} implies
the iteration-complexity of the proposed scheme
increases as  $\D$ does, or equivalently, $\mu$ decreases. As a result, the best strategy is to choose a scalar $\D \ge 2 \D_0$ 
such that $\D={\cal O}(\D_0)$.
A technical difficulty of the proposed scheme is that $\D_0$ can not be explicitly computed since it depends on $d_0$ which is generally not known.

Our first framework below is essentially Framework 1 applied to \eqref{eq:inre} with an arbitrary guess of $\D$, and hence of $\mu=\rho/\D$.
In view of the above discussion, it is guaranteed to work only for large values of $\D$, i.e., when
$\D \ge 2 \D_0$. 
Subsequently, we present a dynamic scheme (see Framework~3) which successively calls Framework 2 for a sequence of increasing values of $\D$.
It is shown in Theorem \ref{th:main} that the latter scheme has the same iteration-complexity as
the best one for Framework 2 (i.e., the one obtained under the hypothetical assumption that a  scalar $\D \ge 2 \D_0$ and $\D={\cal O}(\D_0)$  is known).

\vgap
\vgap

\noindent
\fbox{
\begin{minipage}[h]{6.4 in}
{\bf Framework 2}
{(A static regularized NE-HPE framework for solving \eqref{eq:in}).} 
\begin{itemize}
\item[] \mbox{Input:} $(z_0, \eta_0,\mathcal{D})\in Z\times \R_{+}\times\R_{++}$ and $(\sigma,\tau,\lambda,\rho,
\varepsilon)\in  [0,1)\times  (0,1) \times \R_{++}
\times \R_{++}\times \R_{++}$; 
\item[(0)] set $ \mu= \rho/\mathcal{D}$ and $k=1$;
\item[(1)] choose $\lambda_k \ge \lambda$ and find 
$(z_k,\tz_k,\varepsilon_k,\eta_k)\in Z\times Z\times \R_+\times \R_+$  
       such that

       \begin{equation}\label{eq:ec.2}
  r_k:= \frac{1}{\lambda_k} \nabla (d{w})_{z_k}(z_{k-1})  \in \left(\mu \nabla (d{w})_{z_0}(\tz_k)+T^{[\varepsilon_k]}(\tz_k)\right), 
  \end{equation}
      \begin{equation}	\label{eq:es.2}
 (d{w})_{z_k}({\tz}_k) + \lambda_k\varepsilon_k+\eta_k \leq \sigma (d{w})_{z_{k-1}} (\tz_{k})+(1-\tau)\eta_{k-1}; 
    \end{equation}
       
\item[(2)] if $\norm{r_k}^*\leq \rho/2$
and $\varepsilon_k\leq \varepsilon$,
then stop and output
$(\tilde{z}_k,\tilde r_k,\varepsilon_k)$ where
\[
\tilde r_k = r_k - \mu \nabla (d{w})_{z_0}(\tz_k);
\]
otherwise, set  $k\leftarrow k+1$ and go to step 1. 
\end{itemize}
\noindent
{\bf end}
\end{minipage}
}
\vgap
\vgap

We now make two remarks about Framework~2.
First, as mentioned above,
it is the special case of Framework~1 in which $S(\cdot)=\mu \nabla (d{w})_{z_0}(\cdot)$,
and hence solves MIP \eqref{eq:inre}.
Second, since Section~\ref{sec:smhpe} only deals with
convergence rate bounds, a stopping criterion was not added to Framework~1.
In contrast, Framework~2 incorporates
a stopping criterion (see step 2 above) based on which  its iteration-complexity bound is obtained.
Clearly, \eqref{eq:ec.2} together with the termination criteria  $\|r_k\|^* \le \rho/2$ and
$\varepsilon_k \le \varepsilon$ provides a certificate of the quality of $\tilde z_k$ as an approximate
solution of the regularized MIP \eqref{eq:inre}.

The next result establishes the pointwise iteration-complexity of Framework 2
and shows that any instance of Framework 2 also solves the original MIP \eqref{eq:in} when $\D$ is sufficiently large.

\begin{theorem}\label{cr:c.alg2} 
The following statements hold:
\begin{itemize}
\item[(a)]
Framework~2 terminates in at most
\begin{equation}\label{eq:it.cc}
\max\left\{
\frac{M}{m}\left(\frac{\mathcal{D}}{ \lambda\rho}
 +\dfrac{1}{1-\sigma}\right),\frac{1}{\tau}\right\}
\left[ 2 + 
\max\left\{\log^+\left(\dfrac{32M^2(d_0+\eta_0)}
{ (\lambda\rho)^2m}\right),
\log^+\left(\dfrac{d_0+\eta_0}
{ \lambda(1-\sigma)\varepsilon}\right)
\right\} \right]
\end{equation}
iterations with a triple  $(\tilde{z}_k,\tilde r_k,\varepsilon_k)$  satisfying the following conditions 
\begin{align*}
\tilde r_k \in T^{[\varepsilon_k]}(\tilde{z}_k), \quad  \norm{\tilde r_k + \mu\nabla(d{w})_{z_0}(\tilde z_k)}^*\leq \rho/2, \quad 
\varepsilon_k\leq \varepsilon;
\end{align*}
\item[(b)] for every $k\ge 1$,  
\[
\| \nabla(d{w})_{z_0}(\tilde z_k)\|^*
\le \D_0:= \frac{\sqrt{2}M}{\sqrt{m}}\left[{2}+\dfrac{1}{\sqrt{1-\sigma}}\right]\left(d_{0}+\eta_{0}\right)^{1/2};
\]
\item[(c)]
 if $\mathcal{D} \ge 2\D_0$, 
 then $\norm{\tilde r_k}^*\leq \rho.$
\end{itemize}
\end{theorem}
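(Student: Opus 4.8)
The plan is to exploit the fact that Framework~2 is exactly the instance of Framework~1 in which $S(\cdot)=\mu\nabla(d{w})_{z_0}(\cdot)$ and $\mu=\rho/\D$, so that the convergence-rate results of Section~\ref{sec:smhpe} apply verbatim. Under this identification $(S+T)^{-1}(0)=Z^*_\mu$ and hence $\underline{d}_0=d_\mu$, which by Lemma~\ref{lm:dz} is bounded above by $d_0$; I will therefore replace $d_\mu$ by $d_0$ throughout. I will also use that, with $\mu=\rho/\D$, the rate in \eqref{tau} satisfies
\[
\frac{1}{\underline{\tau}}=\max\left\{\frac{M}{m}\left(\frac{\D}{\lambda\rho}+\frac{1}{1-\sigma}\right),\frac{1}{\tau}\right\},
\]
which is precisely the leading factor of \eqref{eq:it.cc}. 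The inclusion $\tilde r_k\in T^{[\varepsilon_k]}(\tilde z_k)$ in part~(a) is immediate from \eqref{eq:ec.2} and the definition $\tilde r_k=r_k-\mu\nabla(d{w})_{z_0}(\tilde z_k)$, and the two residual inequalities in part~(a) are just the stopping test of step~2 rewritten via $r_k=\tilde r_k+\mu\nabla(d{w})_{z_0}(\tilde z_k)$.

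To obtain the iteration count in part~(a), I would start from the geometric bounds of Proposition~\ref{pr:mmm} with $\underline{d}_0=d_\mu\le d_0$,
\[
\|r_k\|^*\le\frac{2\sqrt{2}M}{\lambda\sqrt{m}}(1-\underline{\tau})^{(k-1)/2}\sqrt{d_0+\eta_0},\qquad \varepsilon_k\le\frac{1}{\lambda(1-\sigma)}(1-\underline{\tau})^{k-1}(d_0+\eta_0),
\]
and find the first $k$ making the right-hand sides fall below $\rho/2$ and $\varepsilon$. Solving each inequality for $k$ and using $-\log(1-\underline{\tau})\ge\underline{\tau}$ to pass from the true logarithmic rate to $\underline{\tau}$, it suffices in each case that $k-1\ge\underline{\tau}^{-1}\log^+(\cdot)$; squaring inside the logarithm turns the $r_k$-threshold argument into $32M^2(d_0+\eta_0)/((\lambda\rho)^2m)$, matching \eqref{eq:it.cc}. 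Taking the maximum over the two criteria and absorbing the shift from $k-1$ to $k$ together with the integer rounding into the additive $2\underline{\tau}^{-1}$ yields the stated bound; using $\log^+$ rather than $\log$ correctly handles the case of a vacuous threshold, where termination already occurs at $k=1$.

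For part~(b) I would bound $\|\nabla(d{w})_{z_0}(\tilde z_k)\|^*$ by splitting it through a point $z^*_\mu\in Z^*_\mu$. By \eqref{grad-d}, $\nabla(d{w})_{z_0}(\tilde z_k)=\nabla(d{w})_{z^*_\mu}(\tilde z_k)+\nabla(d{w})_{z_0}(z^*_\mu)$, so the triangle inequality reduces the task to bounding the two summands. The first is controlled by \eqref{eq:093} (with $\ell=0$ and $z^*=z^*_\mu$), after using $(1-\underline{\tau})^{(k-1)/2}\le1$ and $(d{w})_{z_0}(z^*_\mu)\le d_0$ from Lemma~\ref{lm:dz}; the second is controlled by \eqref{eq:789} (with $z'=z_0$ and $z=z^*_\mu$) and the same distance bound. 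Adding the coefficient $[1+(1-\sigma)^{-1/2}]$ from the first term to the extra unit from the second, and using $d_0^{1/2}\le(d_0+\eta_0)^{1/2}$, reproduces exactly the factor $[2+(1-\sigma)^{-1/2}]$ in $\D_0$.

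Part~(c) is then immediate at termination: the triangle inequality gives $\|\tilde r_k\|^*\le\|r_k\|^*+\mu\|\nabla(d{w})_{z_0}(\tilde z_k)\|^*\le\rho/2+(\rho/\D)\D_0$, and $\D\ge2\D_0$ forces the last term to be at most $\rho/2$, so $\|\tilde r_k\|^*\le\rho$. The only genuinely delicate part is the logarithmic bookkeeping in~(a): one must replace $-\log(1-\underline{\tau})$ by $\underline{\tau}$ in the right direction, verify that squaring the $r_k$-threshold produces the constant $32$, and ensure that $\log^+$ together with the additive $2\underline{\tau}^{-1}$ simultaneously absorb the possibly vacuous thresholds and the integer rounding.
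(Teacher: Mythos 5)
Your proposal is correct and follows essentially the same route as the paper's proof: part (a) rests on Proposition~\ref{pr:mmm} with $\underline{d}_0=d_\mu\le d_0$ (Lemma~\ref{lm:dz}) and the estimate $-\log(1-\underline{\tau})\ge\underline{\tau}$ (including the constant $32$ and the identification of $\underline{\tau}^{-1}$ with the leading factor of \eqref{eq:it.cc}), part (b) on the triangle inequality through a point $z^*_\mu\in Z^*_\mu$ combined with \eqref{eq:789} and \eqref{eq:093}, and part (c) is exactly the argument in the paragraph following Lemma~\ref{lm:dz}. The only cosmetic differences are that you argue termination directly (first $k$ below both thresholds) where the paper argues by contrapositive, and in (b) you apply \eqref{eq:093} with $\ell=0$ and bound the geometric factor by one rather than taking $\ell=k-1$ and invoking \eqref{eq:094}; these are equivalent.
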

\begin{proof}
(a) Assume
that  Framework~2 has not terminated at the $k$-th
iteration.
Then, either
$\norm{r_k}^*>\rho/2$ or $\varepsilon_k>\varepsilon$.
Assume first that $\norm{r_k}^*>\rho/2$.
Since Framework 2 is a special case of Framework~1
applied to MIP \eqref{eq:inre}
with $S(z)=\mu \nabla(d{w})_{z_0}(z)$,
the latter assumption 
and Corollary~\ref{pr:mmm} imply that
\[
 \frac{\rho}{2}<\|r_k\|^*\leq  \frac{2\sqrt{2}M}{\lambda\sqrt{m}}(1-\underline{\tau})^{(k-1)/2}
 (d_{\mu}+\eta_0)^{1/2}\leq  \frac{2\sqrt{2}M}{\lambda\sqrt{m}}(1-\underline{\tau})^{(k-1)/2}
 (d_{0}+\eta_0)^{1/2}
 \]
 where the last inequality is due to Lemma~\ref{lm:dz}.
Rearranging the last inequality, taking logarithms of both sides of the resulting  inequality and using
the fact that
$\log(1-\underline{\tau})\leq -\underline{\tau}$,  we conclude that
\begin{equation*}
 k< 1+\underline{\tau}^{-1}\log
\left(\dfrac{32M^2(d_0+\eta_0)}
{( \lambda\rho)^2m}\right).
\end{equation*}
If, on the other hand, $\varepsilon_k>\varepsilon$,
we conclude by using a similar reasoning that
\begin{equation*}
 k< 1+\underline{\tau}^{-1}\log
\left(\dfrac{d_0+\eta_0}
{(1-\sigma) \lambda\varepsilon}\right).
\end{equation*}
The complexity bound in (a) now follows from the above observations, the definition of $\underline{\tau}$ in \eqref{tau},
and the fact that $\mu=\rho/\mathcal{D}$.

(b)  By first considering \eqref{eq:093} with $\ell=k-1$ and then using \eqref{eq:094}, we have
\[
\| \nabla(d{w})_{z_\mu^* }(\tilde z_k)\|^* \leq  \frac{\sqrt{2}M}{\sqrt{m}}\left(1+ \frac{1}{\sqrt{1-\sigma}}\right)((dw)_{z_{0}}(z_\mu^*)+\eta_{0})^{1/2}
\]
for an arbitrary point  $ z^*_\mu \in Z^*_{\mu}$.
Hence, using \eqref{grad-d}, \eqref{eq:789} and the triangle inequality, we obtain
\begin{align*}
 \| \nabla(d{w})_{z_0}(\tilde z_k)\|^* &\leq \|\nabla d w_{ z_0}{(z_\mu^*) } \|^*+ \| \nabla(d{w})_{z_\mu^* }(\tilde z_k)\|^* \\
  &\leq  \frac{\sqrt{2}M}{\sqrt{m}}\left[( (d{w})_{z_0}(z_\mu^*))^{1/2}+\left(1+\frac{1}{\sqrt{1-\sigma}}\right)( (d{w})_{z_0}(z_\mu^*)+\eta_0)^{1/2}\right]\\
  &\leq  \frac{\sqrt{2}M}{\sqrt{m}}\left[2+ \frac{1}{\sqrt{1-\sigma}}\right]\left((d{w})_{z_0}(z_\mu^*)+\eta_{0}\right)^{1/2}
 \end{align*}
 which implies  the conclusion of (b),  in view of  Lemma~\ref{lm:dz} and the definition of $\D_0$.

(c) This statement follows  immediately from (a) and (b) (see the paragraph following Lemma~\ref{lm:dz}).

\end{proof}

We now make some remarks about Theorem~\ref{cr:c.alg2}.
First, if $(1-\sigma)^{-1}$ and $\tau^{-1}$  are    $\mathcal{O}(1)$,   and an input $\D$ for Framework~2 satisfying 
 $2\D_0\leq {\cal D}= {\cal O}(\D_0)$ 
is known,
then the complexity bound \eqref{eq:it.cc}  becomes
\begin{align}
\label{eq:it.c3}
 \mathcal{O}\left(\frac{M}{m}\left(
\dfrac{M(d_0+\eta_0)^{1/2}}
{  \lambda\rho\sqrt{m}}
+1\right)
\left[ 1 + 
\max\left\{\log^+\left(\dfrac{M^2(d_0+\eta_0)}
{ (\lambda\rho)^2m}\right),
\log^+\left(\dfrac{d_0+\eta_0}
{ \lambda\varepsilon}\right)
\right\} \right]\right),
\end{align}
in view of the definition of $\D_0$ in Theorem \ref{cr:c.alg2}(b).
Second, in general an upper bound 
${\cal D}$ as in the first remark is not 
known and, in such a case, bound~\eqref{eq:it.cc} 
can be much worse than the one above, e.g., when ${\cal D} \gg 2\D_0$.

 We now consider the case where an upper bound ${\cal D} \ge 2\D_0$  such that
${\cal D}= {\cal O}(\D_0)$ is not known and describe a scheme
based on Framework 2 whose iteration-complexity bound is equal to  \eqref{eq:it.c3}.

\vgap
\vgap

\noindent
\fbox{
\begin{minipage}[h]{6.4 in}
{\bf Framework 3} (A dynamic regularized NE-HPE framework for solving \eqref{eq:in}).
\begin{itemize}
\item[(0)] Let $z_0\in \Z$, $\eta_0\geq0$, $\sigma\in [0,1)$, $\tau \in (0,1)$,  $\lambda>0$
and a tolerance 
pair $( \rho, \varepsilon) \in 
\R_{++} \times \R_{++}$ be given and
set $\mathcal{D}= \lambda\rho;$
\item[(1)]  call Framework~2 with input
$(z_0,\eta_0,\mathcal{D})$ and $(\sigma, \tau,\lambda, \rho,\varepsilon)$
to obtain $(\tilde{z},\tilde r,\tilde{\varepsilon})$  as output;
%
\item[(2)]  
if  $\norm{\tilde r}^*\leq \rho$
then stop and output $(\tilde{z},\tilde r,\tilde{\varepsilon})$;
else, set $\mathcal{D} \leftarrow 2\mathcal{D}$
and go to step~1. 
\end{itemize}
\noindent
{\bf end}
\end{minipage}
}
\vgap
\vgap
\vgap

Each iteration of Framework~3 (referred to as an outer iteration) invokes Framework 2, which performs a certain number of iterations (called  inner iterations)
which in turn is bounded by \eqref{eq:it.cc}. The following result gives the overall inner
iteration-complexity of Framework~3.
\begin{theorem}\label{th:main}
 {\rm Framework~3} with input 
$(z_0, \eta_0)\in Z\times \R_{+}$ and $(\sigma,\tau, \lambda, \rho, \varepsilon)\in  [0,1)\times (0,1)\times \R_{++}\times \R_{++}
\times \R_{++}$ such that  $(1-\sigma)^{-1}$ and $1/\tau$ are    $\mathcal{O}(1)$ 
  finds a  triple
$(\tilde{z} ,\tilde r,\tilde{\varepsilon})$ satisfying
\[
\tilde r \in T^{[\tilde \varepsilon]}(\tilde{z}),\quad \norm{\tilde r}^*\leq \rho,
\quad \tilde{\varepsilon}\leq \varepsilon
\]
by performing a total number of inner iterations bounded by \eqref{eq:it.c3}.
\end{theorem}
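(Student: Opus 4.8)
The plan is to bound the total number of inner iterations of Framework~3 by controlling two things: the number of outer iterations (i.e.\ how many times Framework~2 is called), and the inner iteration count of each such call as given by \eqref{eq:it.cc}. First I would observe that Framework~3 doubles $\mathcal{D}$ at each outer iteration, starting from $\mathcal{D}=\lambda\rho$, so after $j$ outer iterations the value is $\mathcal{D}_j = 2^{j-1}\lambda\rho$. By Theorem~\ref{cr:c.alg2}(c), as soon as $\mathcal{D}_j \ge 2\D_0$ the output $\tilde r$ of that call satisfies $\|\tilde r\|^* \le \rho$, so the stopping test in step~2 of Framework~3 succeeds. Hence the total number of outer iterations $\bar j$ is the smallest $j$ with $2^{j-1}\lambda\rho \ge 2\D_0$, giving $\bar j = \mathcal{O}\!\left(1 + \log^+\!\left(\D_0/(\lambda\rho)\right)\right)$, and moreover the terminating value $\mathcal{D}_{\bar j}$ satisfies $2\D_0 \le \mathcal{D}_{\bar j} \le 4\D_0$ (since the previous value was below $2\D_0$), i.e.\ the last call is made with a $\mathcal{D}$ of order $\D_0$.

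Next I would plug $\D_0$ from Theorem~\ref{cr:c.alg2}(b) into these expressions. Since $(1-\sigma)^{-1}$ and $\tau^{-1}$ are $\mathcal{O}(1)$, we have $\D_0 = \mathcal{O}\!\left((M/\sqrt{m})(d_0+\eta_0)^{1/2}\right)$, so each appearance of $\mathcal{D}/(\lambda\rho)$ in \eqref{eq:it.cc} is bounded, for the $j$-th call, by $2^{j-1}\D_0/(\lambda\rho)$. The inner iteration bound for the $j$-th call is therefore $\mathcal{O}\!\left(\frac{M}{m}\bigl(2^{j-1}\tfrac{\D_0}{\lambda\rho}+1\bigr)\,L\right)$, where $L$ denotes the logarithmic factor in \eqref{eq:it.cc}, which does not depend on $\mathcal{D}$. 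The plan is then to sum this over $j=1,\dots,\bar j$. The geometric growth of the $2^{j-1}$ term is the crucial point: the sum $\sum_{j=1}^{\bar j} 2^{j-1}$ is dominated by its last term $2^{\bar j-1} = \mathcal{O}(\D_0/(\lambda\rho))$, so the total is $\mathcal{O}\!\left(\frac{M}{m}\bigl(\tfrac{\D_0}{\lambda\rho}+\bar j\bigr)L\right)$.

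Finally I would simplify. Substituting $\D_0/(\lambda\rho) = \mathcal{O}\!\left(M(d_0+\eta_0)^{1/2}/(\lambda\rho\sqrt{m})\right)$ reproduces the leading factor in \eqref{eq:it.c3}, and the additive $\bar j$ term is absorbed since $\bar j = \mathcal{O}(1+\log^+(\D_0/(\lambda\rho)))$ is of the same logarithmic order as (indeed no larger than) the factor $L$ already present. It remains to verify that the correctness assertions in the statement hold: the output triple satisfies $\tilde r \in T^{[\tilde\varepsilon]}(\tilde z)$ and $\tilde\varepsilon \le \varepsilon$ directly by Theorem~\ref{cr:c.alg2}(a) applied to the last (successful) call, and $\|\tilde r\|^* \le \rho$ holds by the step~2 stopping test of Framework~3.

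The main obstacle I anticipate is the geometric-sum argument for the total inner count: one must argue that although early calls with small $\mathcal{D}$ may fail the $\|\tilde r\|^*\le\rho$ test, they are individually cheap, and that the doubling guarantees both that the number of failed calls is only logarithmic and that the accumulated work is dominated by the final successful call rather than exceeding \eqref{eq:it.c3}. A secondary subtlety is confirming that $\mathcal{D}_{\bar j} = \mathcal{O}(\D_0)$ at termination, which is exactly what prevents the final (and dominant) call from being run with a wastefully large $\mathcal{D}$ — this is where the doubling strategy, rather than a single large guess, earns the bound \eqref{eq:it.c3}.
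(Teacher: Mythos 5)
Your proposal is correct and follows essentially the same route as the paper: bound the number of outer iterations by the smallest $j$ with $2^{j-1}\lambda\rho\ge 2\D_0$, sum the per-call bound \eqref{eq:it.cc} over the calls, observe that the geometric sum is dominated by its last term which is ${\cal O}(\D_0/(\lambda\rho))$ because the final doubling overshoots $2\D_0$ by at most a constant factor, and then substitute the expression for $\D_0$ from Theorem~\ref{cr:c.alg2}(b). The only blemish is the phrase asserting that $\mathcal{D}_j/(\lambda\rho)$ is bounded by $2^{j-1}\D_0/(\lambda\rho)$ --- it equals $2^{j-1}$ --- but your subsequent summation uses the correct quantity, so the argument and the final bound \eqref{eq:it.c3} stand.
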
 
\begin{proof}
Note that at 
the $k$-th outer iteration of Framework~3,  
we have $\mathcal{D}= 2^{k-1} \lambda \rho$. Hence, taking $\D_0$ as in Theorem~\ref{cr:c.alg2}(b), it follows from
Theorem \ref{cr:c.alg2}(c) that
Framework~3 terminates in at most $K$ outer iterations where $K$ is the smallest integer $k \ge 1$ satisfying 
$2^{k-1}\lambda \rho \ge 2\D_0$. Thus, 
\begin{align*}
 K=1+\left\lceil\log^+\left(
{2\D_0}/(\lambda\rho)\right)\right\rceil.
\end{align*}
%
%
%
In order to simplify the calculations, let us denote
\begin{align}\label{beta_1}
\beta_1 &:=2 + 
\max\left\{\log^+\left(\dfrac{32M^2(d_0+\eta_0)}
{ (\lambda\rho)^2m}\right),
\log^+\left(\dfrac{d_0+\eta_0}
{ \lambda(1-\sigma)\varepsilon}\right)
\right\}. 
\end{align}
In view of Theorem \ref{cr:c.alg2}(a) and \eqref{beta_1}, we see that 
the overall  number of inner iterations  is bounded by
\[
\widetilde K:=
\beta_1\sum_{k=1}^{K}\,\max\left\{\frac{M}{m}\left({2^{k-1}}+\frac{1}{1-\sigma}\right),\frac{1}{\tau}\right\}
\leq \beta_1\left[\frac{M}{m}\left({2^K-1}+\frac{K}{1-\sigma}\right)+\frac{K}{\tau}\right],
\]
and hence
\begin{align}\label{eq:ktilde}
\widetilde K&\leq \frac{M\beta_1}{m}\left[{1}+\frac{1}{1-\sigma}+\frac{1}{\tau} \right]2^K.
\end{align}
To end the proof, it suffices 
to show that $\widetilde K$ is bounded by \eqref{eq:it.c3}.
If $K=1$, then  \eqref{beta_1} combined with \eqref{eq:ktilde} and the fact that $(1-\sigma)^{-1}$ and  $1/\tau$  are    $\mathcal{O}(1)$
shows  that \eqref{eq:it.c3} trivially holds. Assume now that $K>1$ and note that $k:=K-1$ violates
the inequality $2^{k-1}\lambda\rho
\ge 2\D_0$, and hence 
that $2^{K}<8\D_0/(\lambda\rho)$.
The latter estimate 
combined with inequality~\eqref{eq:ktilde}  implies that
$$\widetilde {K}<\frac{8M\beta_1}{m}\left[ {1}+\frac{1}{1-\sigma}+\frac{1}{\tau} \right] \frac{\D_0}{\lambda\rho}$$ 
which together with  \eqref{beta_1}, and the fact that $(1-\sigma)^{-1}$ and  $1/\tau$  are    $\mathcal{O}(1)$ and
$\D_0= {\cal O}(M(d_0+\eta_0)^{1/2}/\sqrt{m})$, imply that $\widetilde K$ 
is bounded by \eqref{eq:it.c3}.
\end{proof}

Note that if $\varepsilon_k=0$ for every $k$, then the complexity bound \eqref{eq:it.c3}  becomes independent of the tolerance  $\varepsilon$, namely, it  
 reduces  to
\begin{align}
\label{eq:itfram2eps0}
 \mathcal{O}\left(\frac{M}{m}\left(\frac{M(d_0+\eta_0)^{1/2}}{\lambda\rho\sqrt{m}}+1\right)\left[1+
\log^+\left(
\dfrac{M^2(d_0+\eta_0)}
{  (\lambda\rho)^2m}\right)
\right]\right).
\end{align}

We now make two remarks about Frameworks 2 and 3.
First, Framework 3 assumes that every call to Framework 2 in step 1 uses the same distance generating function $w$.
However, it is possible to use a different  distance generating
function in each call to Framework 2 and still preserve the iteration-complexity bound
obtained in Theorem \ref{th:main}.
Indeed, if for a given $d_0 >0$, step 1 of Framework 3  calls Framework 2 with a probably different
distance generating function $w$ which remains the same throughout the same call, is
$(m,M)$-regular with respect to $(Z,\|\cdot\|)$ as in Definition \ref{def:assu},
and satisfies
\[
 \inf_{z \in T^{-1}(0)} (d{w})_{z_0}(z) \le d_0,
\]
then the resulting variant of Framework 3 will have the same iteration-complexity bound \eqref{eq:it.c3}.
Second, the termination criterion used by Framework 2 (and consequently Framework 3)
is based on the last generated triple $(\tx_k,\tilde r_k,\varepsilon_k)$. Instead of it, Framework 2 can also
use an ergodic stopping criterion, namely: it terminates
 when $\|r_{\ell,k}^a\|^* \le \rho/2$ and
$\varepsilon_{\ell,k}^a\leq \varepsilon$ and then output the triple $(\tz,\tilde r,\tilde \varepsilon) = (\tz_{\ell,k}^a, \tilde r_{\ell,k}^a,\varepsilon_{\ell,k}^a)$
where
\[
\tilde r_{k}^a := r_{k}^a - \mu \sum_{i=1}^k\frac{\lambda_i}{\Lambda_k}\nabla (d{w})_{z_0}(\tilde z_{i})
\]
It can be shown with the aid of Proposition \ref{prop:ergodic} that when $\ell=0$ and ${\cal D}$ is properly initialized,
the iteration complexity of the modified (ergodic) Framework 3 is ${\cal O}(\max \{\rho^{-1},\varepsilon^{-1}\})$.
We omit the details for the sake of shortness but note that the bound is essentially the same as the ergodic iteration complexity
bound for the standard HPE framework 
obtained in \cite{monteiro2010iteration}.


 \section{A regularized  ADMM class }
\label{sec:amal}

The goal of this section is to present a regularized  ADMM class
for  solving  linearly constrained convex programming problems
 which has
a better pointwise iteration-complexity than the standard ADMM.
It contains two subsections. The first one describes our setting, 
our assumptions, the regularized ADMM class  and its improved pointwise iteration-complexity bound. 
The second one is dedicated to the proof of the main result  stated in the first subsection.

 \subsection{A regularized ADMM class and its pointwise iteration-complexity}\label{subsec:Admm1}

In this subsection,  let $\Sf$, $\Y$ and $\X$ be finite-dimensional real vector spaces  with  inner products
 denoted by $\inner{\cdot}{\cdot}_\mathcal{S}$, $\inner{\cdot}{\cdot}_\Y$ and
$\inner{\cdot}{\cdot}_\X$, respectively. Let us also consider the norm in $\X$ given by   $\norm{\cdot}_\X:= \inner{\cdot}{\cdot}_\X^{1/2}$,
and the seminorms in
$\mathcal{S}$ and $\Y$ defined by 
\begin{equation}\label{seminorm12}
\|\cdot\|_{\mathcal{S},H}:=\inner{H(\cdot)}{\cdot}^{1/2},  \quad \|\cdot\|_{\Y,G}:=\inner{G(\cdot)}{\cdot}^{1/2},
\end{equation}
respectively, where  $H:\mathcal{S} \to \mathcal{S}$ and $G:\Y \to \Y$ are self-adjoint positive semidefinite linear operators.

Our problem of interest is
\begin{equation} \label{optl}
\inf \{ f(y) + g(s) : C y + D s = c \}
\end{equation}
where
$c \in \X$,
$C: \Y \to \X$ and $D: \Sf \to \X$ are linear operators,
and $f: \Y \to (-\infty,\infty]$ and $g:\Sf \to (-\infty,\infty]$ are proper
closed convex functions. 
The following assumptions are made throughout this section:
\begin{itemize}
  \item[\bf B1)] the
  problem \eqref{optl} has an optimal solution $(s^*,y^*)$ and an associated Lagrange multiplier $x^*$, or equivalently,
   the monotone inclusion
\begin{equation} \label{FAB}
0\in T(s,y,x) := \left[ \begin{array}{c}  \partial g(s)- D^*{x}\\ \partial f(y)- C^* {x} \\ Cy+Ds-c
\end{array} \right] 
\end{equation}
has a solution $(s^*,y^*,x^*)$;
  \item[\bf B2)]  there exists  $x\in \X$
  such that $(C^*x,D^*x) \in \ri(\dom f^*) \times \ri(\dom g^*)$.
\end{itemize}

We now make two
remarks about the above assumptions. First, it follows from the last conclusion of
Proposition \ref{lem:auxSol} in Appendix~\ref{basiresul} that,
 if  the solution set of  \eqref{optl} is  nonempty and bounded, then {\bf B2} holds.
Second, by Proposition \ref{lem:auxSol}(a), if the infimum in \eqref{optl} is finite and {\bf B2} holds,
 then \eqref{optl} has an optimal solution. Hence,
{\bf B2} together with the Slater condition that there exists a feasible pair
$(s,y)$ for \eqref{optl} such that $(s,y) \in \ri(\dom g) \times \ri (\dom f) $ imply that
{\bf B1} holds (see Proposition \ref{lem:auxSol}(c)).

We are ready to state the regularized ADMM class for solving \eqref{optl}.
\\[3mm]
\noindent
{\bf Dynamic regularized alternating direction method of multipliers (DR-ADMM):}
\begin{itemize}
\item[(0)] Let $(s_0,y_0,x_0) \in \mathcal{S}\times \Y\times  \X$, positive scalars $\beta$ and $ \theta$, a tolerance $\rho>0$,
two self-adjoint positive semidefinite linear operators  $H: \mathcal{S} \to \mathcal{S}$ and  $G: \Y \to \Y$ which define
the seminorms \eqref{seminorm12} be given, and set $\mathcal{D}=\rho$;
\item[(1)]   set $\mu= \rho/\mathcal{D}$, 
$\beta_1=\beta\theta/(\theta+\mu)$ and $\beta_2=\beta(1+\mu)$ and $k=1$, and go to (a);
\begin{itemize}
\item[(a)] set
\begin{equation}
\hat{s}_{k-1}=\frac{ s_{k-1}+\mu s_0}{1+\mu}, \quad
\hat{y}_{k-1}=\frac{y_{k-1}+\mu y_0}{1+\mu}, \quad  \hat{x}_{k-1}=\frac{\theta x_{k-1}+\mu x_0}{\theta+\mu}
\label{defyhk}
\end{equation}
and compute an optimal solution $s_k \in \Sf$  of the subproblem
\begin{equation} \label{def:tsk-admm}
\min_{s \in \mathcal{S}} \left \{ g(s) - \inner{ D^*\hat{x}_{k-1}}{s}_\mathcal{S} +
\frac{\beta_1}{2} \| C y_{k-1} + D s - c \|^2_\X+\frac{1+\mu}{2}\|s-\hat s_{k-1}\|_{\mathcal{S},H}^2 \right\};
\end{equation}

\item[(b)]  set $\tilde{x}_k$ 
and $u_k$ as
\begin{align}
\tilde{x}_k&=\hat{x}_{k-1}-\beta_1(Cy_{k-1}+Ds_k-c)\label{defxtk}\\[1mm]
u_k&=\tilde{x}_{k}+\beta_2 (C\hat{y}_{k-1}+Ds_k-c) \label{defuk}
\end{align}
and compute an optimal solution $y_k\in \Y$ of the subproblem
\begin{equation} \label{def:tyk-admm}
\min_{y \in \Y} \left \{ f(y) - \inner{ C^*u_{k}}{y}_\Y +
\frac{\beta_2}{2} \| C y + D s_k - c \|^2_\X +\frac{1+\mu}{2}\|y-\hat y_{k-1}\|_{\Y,G}^2\right\};
\end{equation}

\item[(c)] update $x_k$  as 
\begin{equation}\label{admm:eqxk}
x_k = x_{k-1}-\theta\beta\left[Cy_k+Ds_k-c+\frac{\mu}{\theta\beta}(\tx_k-x_0)\right]
\end{equation}
and compute
\begin{equation}\label{defr}
b_k:=s_{k-1}-s_k, \quad a_k:=y_{k-1}-y_k, \quad q_k := \beta C(y_{k-1}-y_k), \quad  p_k :=  \frac{1}{\beta\theta}(x_{k-1}-x_k);
  \end{equation}

\item[(d)] if 
\begin{equation}\label{innerstop}
\left(\|b_k\|_{\mathcal{S},H}^2+\|a_k\|_{\Y,G}^2+ \frac{1}{\beta} \norm{q_k}_\X^2+\beta\theta\|p_k\|_\X^2\right)^{1/2} \leq \rho/2,
\end{equation}
then set $(s,y,\tilde{x})=(s_k,y_k,\tilde{x}_k)$, compute $(\tilde b, \tilde a,\tilde q, \tilde p)$ as
\begin{equation}\label{eq:rxry}
\tilde b:= b_k-{\mu} (s_k-s_0), \; \tilde a:= a_k-{\mu} (y_k-y_0), \; \tilde q:=  q_k-\mu \beta C(y_k-y_0),  \;  \tilde p:= p_k-\frac{\mu}{\beta\theta} (\tilde{x}_k-x_0)
\end{equation}
and go to (2); else set $k \leftarrow k+1$ and go to~(a);  
\end{itemize}

\item[(2)]   if 
\begin{equation}\label{Scri2}
\left(\|\tilde b\|_{\mathcal{S},H}^2+\|\tilde a\|_{\Y,G}^2+\frac{1}{\beta}\norm{\tilde q}_\X^2+\beta\theta\norm{\tilde p}_\X^2\right)^{1/2} \le \rho,
\end{equation}
then  stop and output $(s,y,\tilde{x},\tilde b, \tilde a,\tilde q, \tilde p)$; otherwise,
set $\mathcal{D}\leftarrow2\mathcal{D}$ and go to step~1.
\end{itemize}
\noindent
{\bf end}
\\[2mm]

We now make some remarks about the DR-ADMM.
First, assumption {\bf B2} together with Corollary \ref{cr:auxSol} imply that both subproblems
\eqref{def:tsk-admm} and \eqref{def:tyk-admm} have optimal solutions and hence DR-ADMM is well-defined.
Second, loop (a)-(d) with $\mu=0$ is exactly the ADMM  class \eqref{ADMMclass} with penalty parameter $\beta$ and relaxation stepsize $\theta$
(see for example \cite{glowinski1984}) since in this situation we have
$\beta_1=\beta_2=\beta$, $\hat{x}_{k-1}=u_k=x_{k-1}$ and $\hat{y}_{k-1}=y_{k-1}$.
However, it should be emphasized that DR-ADMM requires $\mu>0$. Hence, it does not belong to the ADMM class  \eqref{ADMMclass} and the results obtained for the DR-ADMM in this section do not apply to the latter class.
Third, DR-ADMM should essentially be viewed as a regularized variant of ADMM which dynamically adjusts the regularization parameter $\mu>0$,
or equivalently, the parameter $\D>0$ (as in Section~\ref{sec:imp.hpe}). 
Indeed, it will be shown later on (see Lemmas \ref{lemdeltaxy}, \ref{th:hpeheta<1} and \ref{th:hpeheta>1})
that DR-ADMM is a special instance of  Framework 3
in which $\varepsilon_k=0$ for all $k \ge 1$.
More sepecifically, steps 0, 1 and 2 of DR-ADMM correspond exactly to steps 0, 1 and 2 of Framework 3, respectively.
A single execution of steps 1 and 2 is referred to as an outer iteration of DR-ADMM.
A single execution of steps (a)-(d) is referred to as an inner iteration of DR-ADMM which, in the context of Framework 3,
corresponds to an iteration of Framework 2 (see step 1 of Framework 3). The cycle of inner iterations consisting of (a)-(d)
corresponds to the implementation of a special instance of Framework 2 in which 
$\varepsilon_k=0$ for all $k \ge 1$. Moreover, the two residuals $r_k$ and $\tilde r$ computed at the end of steps 1 and 2 of Framework 2,
respectively, correspond in the context of the DR-ADMM to the pairs $(Hb_k,Ga_k+C^*q_k,p_k)$ and 
$(H\tilde b,G\tilde a+C^*\tilde q,\tilde p)$, respectively
(see Lemma \ref{lemdeltaxy}).

We also make a remark about the subproblems \eqref{def:tsk-admm} and \eqref{def:tyk-admm}.
Both subproblems is the sum of quadratic function with a convex function (either $g$ or $f$).
For the purpose of this remark, assume that both $f$ and $g$ are simple functions in the sense that
both subproblems can be easily solved if the Hessians of their corresponding quadratic is a multiple of the identity operator.
In most applications, one of the operators is the identity operator which we may assume to be the $D$ operator.
In such a case, subproblem \eqref{def:tyk-admm}  is easy to solve if we choose $H=0$. Also, choosing
$G= \alpha I - \beta C^*C$ where $\alpha$ is such that $G$ is positive semidefinite ensures that the Hessian of the
 quadratic function of \eqref{def:tyk-admm}  is $(1+\mu) \alpha I$, and hence  this subproblem is also easy to solve.

 
The following result, which is the main one of this section, shows that the regularized ADMM class has a better pointwise iteration-complexity than the standard ADMM.
Its complexity bound depends on the quantity
\begin{equation}\label{def:d0admm}
d_0:=\inf_{(s,y,x) \in T^{-1}(0)} \left\{\frac{1}{2}\|s_0-s\|_{\mathcal{S},H}^2+\frac{1}{2}\|y_0-y\|_{\Y,G}^2+\frac{\beta}{2}\|C(y_0-y)\|_\X^2+\frac{1}{2\beta\theta}\|x_0-x\|_\X^2\right\}
\end{equation}
where $T$ is as defined in \eqref{FAB}.

\begin{theorem}\label{th:maintheoADMM} 
DR-ADMM with stepsize $\theta\in (0,(1+\sqrt{5})/2)$  terminates in at most
\begin{equation}\label{eq:iterRADMMtheta<1}
\mathcal{O}\left(\left(1+\frac{\sqrt{d_0}}{\rho}\right)\left[1 + \log^+\left(\dfrac{\sqrt{d_0}}{\rho}\right)\right]\right)
\end{equation} 
iterations with $(s,y,\tilde{x},\tilde b,\tilde a,\tilde q,\tilde p)$ satisfying 
\begin{equation}\label{eq:th_incADMMtheta<1} 
\left( 
\begin{array}{c} 
H\tilde b\\[1mm]  
G\tilde a+C^*\tilde{q}\\[1mm]  
\tilde p
\end{array} 
\right) \in 
\left[ 
\begin{array}{c} 
\partial g(s)- D^*\tilde{x}\\[1mm]  
\partial f(y)- C^*\tilde{x}\\[1mm]  
Cy+Ds-c
\end{array} \right],
\quad  \left(\|\tilde b\|_{\mathcal{S},H}^2+\|\tilde a\|_{\Y,G}^2+\frac{1}{\beta}\norm{\tilde q}_\X^2+\beta\theta\norm{\tilde p}_\mathcal{X}^2\right)^{1/2}\leq \rho.
\end{equation}
\end{theorem}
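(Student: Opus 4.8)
The plan is to recognize the DR-ADMM as a concrete instance of Framework~3 applied to the monotone inclusion $0\in T(z)$ with $z=(s,y,x)$ and $T$ given by \eqref{FAB}, and then to read off the complexity bound \eqref{eq:iterRADMMtheta<1} from Theorem~\ref{th:main}. The first, structural step is to fix the distance generating function
\[
w(s,y,x):=\frac{1}{2}\|s\|_{\mathcal{S},H}^2+\frac{1}{2}\|y\|_{\Y,G}^2+\frac{\beta}{2}\|Cy\|_\X^2+\frac{1}{2\beta\theta}\|x\|_\X^2,
\]
which is the quadratic form $\tfrac12\inner{Az}{z}$ associated with the self-adjoint positive semidefinite operator $A(s,y,x)=(Hs,(G+\beta C^*C)y,(\beta\theta)^{-1}x)$. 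By Example~\ref{arth2}(b), $w$ is $(1,1)$-regular with respect to $(\Z,\|\cdot\|)$, where $\|z\|^2=\inner{Az}{z}$; here $\|\cdot\|$ is a genuine seminorm (and $w$ fails to be strongly convex) exactly when $H$, $G$ or $C$ is degenerate, which is the generality that the regularized NE-HPE machinery of Section~\ref{sec:imp.hpe} was designed to tolerate. With this choice $(dw)_{z_0}(z)$ equals the integrand of \eqref{def:d0admm}, so the quantity $d_0$ of \eqref{def:d0admm} coincides with the $d_0$ of \eqref{eq:dmuxmu}, and we take $\lambda_k\equiv\lambda=1$.

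Second, I would establish the identification of one inner cycle (a)--(d) with one iteration of Framework~2 (this is the content of Lemma~\ref{lemdeltaxy}). Writing down the optimality conditions of subproblems \eqref{def:tsk-admm} and \eqref{def:tyk-admm}, substituting the definitions \eqref{defxtk}--\eqref{defuk} of $\tilde x_k,u_k$ together with the update \eqref{admm:eqxk} and the averages \eqref{defyhk}, and simplifying using \eqref{defr} and \eqref{eq:rxry}, one checks that with $\tilde z_k:=(s_k,y_k,\tilde x_k)$ and $\varepsilon_k=0$ the triple $(Hb_k,Ga_k+C^*q_k,p_k)$ equals $r_k=\nabla(dw)_{z_k}(z_{k-1})$ and lies in $(\mu\nabla(dw)_{z_0}+T)(\tilde z_k)$, i.e. \eqref{eq:ec.2} holds, while $(H\tilde b,G\tilde a+C^*\tilde q,\tilde p)$ is the de-regularized residual $\tilde r_k=r_k-\mu\nabla(dw)_{z_0}(\tilde z_k)$. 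A short computation with the seminorm shows that $\|Az\|^*=\|z\|$ for every $z$, so the inner test \eqref{innerstop} is precisely $\|r_k\|^*\le\rho/2$ and the outer test \eqref{Scri2} is precisely $\|\tilde r\|^*\le\rho$; this matches the stopping rules of Frameworks~2 and~3 and simultaneously delivers the target inclusion in \eqref{eq:th_incADMMtheta<1}.

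Third, I would verify the HPE error inequality \eqref{eq:es.2}, and this is where I expect the main difficulty. It splits into two regimes. For $\theta\in(0,1)$ (Lemma~\ref{th:hpeheta<1}) I expect \eqref{eq:es.2} to hold with $\varepsilon_k=0$, $\eta_k\equiv0$ and some fixed $\sigma\in[0,1)$ depending only on $\theta$; this should follow from a direct expansion of the three Bregman terms in $(dw)_{z_k}(\tilde z_k)$ and $(dw)_{z_{k-1}}(\tilde z_k)$ using \eqref{defr}. For $\theta\in[1,(1+\sqrt5)/2)$ (Lemma~\ref{th:hpeheta>1}) the pure relative-error bound fails because the over-relaxed multiplier step \eqref{admm:eqxk} overshoots, and one must introduce a nontrivial nonnegative sequence $\{\eta_k\}$ to absorb the excess. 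The delicate part will be choosing $\eta_k$ as a suitable multiple of the squared increment seminorm so that the recursion $(dw)_{z_k}(\tilde z_k)+\eta_k\le\sigma(dw)_{z_{k-1}}(\tilde z_k)+(1-\tau)\eta_{k-1}$ holds for fixed $\sigma\in[0,1)$ and $\tau\in(0,1)$ depending only on $\theta$, and so that $\eta_0=\mathcal{O}(d_0)$; the threshold $\theta<(1+\sqrt5)/2$ is exactly what makes the relevant quadratic form in the increments $(b_k,a_k,q_k,p_k)$ nonnegative.

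Finally, I would assemble the pieces. Since $m=M=1$, $\lambda=1$, $\varepsilon_k\equiv0$, and $(1-\sigma)^{-1},\tau^{-1}=\mathcal{O}(1)$ with $\eta_0=\mathcal{O}(d_0)$, Theorem~\ref{th:main} applied to the dynamic-$\D$ scheme bounds the total number of inner iterations by \eqref{eq:itfram2eps0}, which specializes to $\mathcal{O}((1+\sqrt{d_0}/\rho)[1+\log^+(\sqrt{d_0}/\rho)])$. As one DR-ADMM iteration is exactly one inner iteration, this is the claimed bound \eqref{eq:iterRADMMtheta<1}, and the output triple already satisfies \eqref{eq:th_incADMMtheta<1} by the correspondence established in the second step.
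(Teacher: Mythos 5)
Your plan follows the paper's own route step for step: the same distance generating function \eqref{df:norm_admm}, the same identification of an inner cycle with an iteration of Framework~2 (Lemma~\ref{lemdeltaxy}), the same split into the regimes $\theta\in(0,1)$ (where $\eta_k\equiv 0$ works, Lemma~\ref{th:hpeheta<1}) and $\theta\in[1,(1+\sqrt 5)/2)$ (where a nontrivial $\eta_k$ is chosen so that a quadratic form in the increments is positive semidefinite, Lemma~\ref{th:hpeheta>1}), and the same final assembly through Theorem~\ref{th:main} with $\lambda=m=M=1$, $\varepsilon_k\equiv 0$ and $\eta_0={\cal O}(d_0)$.

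There is, however, one genuine gap: you never verify that the regularized inclusions have solutions, i.e., that $Z^*_\mu\neq\emptyset$ for every $\mu>0$ (see \eqref{defZu} with $T$ as in \eqref{FAB} and $z_0=(s_0,y_0,x_0)$). This is a standing hypothesis of Section~\ref{sec:imp.hpe}; every estimate you want to quote (Lemma~\ref{lm:dz}, Proposition~\ref{pr:mmm}, Theorem~\ref{cr:c.alg2}, Theorem~\ref{th:main}) measures progress against points of $Z^*_\mu$, so if $Z^*_\mu$ were empty those bounds would be vacuous. Crucially, nonemptiness is not automatic in this setting: your $w$ is only regular with respect to a \emph{seminorm} and fails to be strongly convex precisely when $H$, $G$ or $C$ is degenerate, so Proposition~\ref{existence} cannot be invoked. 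The paper needs assumption {\bf B2} and a separate Lagrange-duality argument (Lemma~\ref{Breges}(b), proved in Appendix~\ref{nonemsolu} via the perturbed problem \eqref{prob:perturbedapendix} and Proposition~\ref{lem:auxSol}) to secure it. Moreover, this omission propagates into exactly the step you flag as delicate: to get $\eta_0={\cal O}(d_0)$ for $\theta\in[1,(1+\sqrt 5)/2)$ one needs the first-iterate estimate $\|q_1\|_\X\|p_1\|_\X+\|a_1\|_{\Y,G}^2 \le \frac{8\theta\max\{\beta,\beta^{-1}\}}{2-\theta}\, d_0$ of Lemma~\ref{lem:deltak}(a), whose proof itself picks a point $z^*_\mu\in Z^*_\mu$ and compares Bregman distances to it. So the existence result is not a side remark but a load-bearing ingredient, and your plan cannot be completed as stated without supplying it.
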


Before ending this subsection, we compare the  iteration-complexity bound of Theorem~\ref{th:maintheoADMM}
with the  ones obtained in  \cite{He2015,monteiro2010iteration}.
A pointwise convergence rate for an  ADMM scheme is established in \cite{He2015}. More specifically, it is implicitly
shown that  this scheme generates a sequence $\{(s_k,y_k,\tilde x_k,\tilde b_k, \tilde q_k,\tilde p_k)\}$
(which is the same as the one generated
by a cycle of inner iterations of the DR-ADMM with $G=0$, $\theta=1$ and $\mu=0$)
satisfying the inclusion in \eqref{eq:th_incADMMtheta<1}  and
\[
\|\tilde b_k\|_{\mathcal{S},H}^2+ \frac{1}{\beta} \|\tilde q_k\|^2_\X+\beta \| \tilde p_k \|^2_\X  \le \frac{2 d_0}{k} \quad \forall k \ge 1,
\]
which, as a consequence, implies an ${\cal O}(d_0/\rho^2)$ pointwise iteration-complexity bound for finding
a $(s,y,\tilde x,\tilde b, \tilde q,\tilde p) $ satisfying the inequality in \eqref{eq:th_incADMMtheta<1}   with $\theta=1$.
Hence, our bound \eqref{eq:iterRADMMtheta<1} is better than the latter one by an ${\cal O}(\rho \log(\rho^{-1}))$ factor.

We now discuss the 
relationship of bound \eqref{eq:iterRADMMtheta<1} with the ergodic iteration-complexity bound
that immediately follows from \cite[Theorem~7.5]{monteiro2010iteration}).
Given $\rho>0$ and $\varepsilon>0$, this result implies that the standard ADMM (i.e.,   $(H,G)=(0,0)$ and $\mu=0$ in a cycle of inner iterations of the DR-ADMM)  with   $\theta=1$ obtains,
by averaging the first  $j$ elements of the sequence $\{(s_k,y_k,\tilde x_k, \tilde q_k,\tilde p_k)\}$
for some $j={\cal O}(\max\{d_0/\rho,d_0^2/\varepsilon\})$, a quintuple $\{(s^a,y^a,\tilde x^a, \tilde q^a,\tilde p^a)\}$ satisfying
\begin{equation*}
 \left( 
\begin{array}{c} 
0\\[1mm]  
C^*\tilde{q}^a\\[1mm]  
\tilde p^a
\end{array} 
\right) \in 
\left[ 
\begin{array}{c} 
\partial_{\epsilon}g(s^a)- D^*\tilde{x}^a\\[1mm]  
\partial_{\epsilon}f(y^a)- C^*\tilde{x}^a\\[1mm]  
Cy^a+Ds^a-c
\end{array} \right],
\quad  \left(\frac{1}{\beta}\norm{\tilde q^a}_\X^2+\beta\norm{\tilde p^a}_\mathcal{X}^2\right)^{1/2}\leq \rho.
\end{equation*}
Hence, the dependence of the above ergodic bound on $\rho$ is ${\cal O}(d_0/\rho)$, which is better than the
pointwise bound \eqref{eq:iterRADMMtheta<1} by only a logarithmic factor. On the other hand, with respect to $\varepsilon$,
bound \eqref{eq:iterRADMMtheta<1} is better than the above ergodic bound since it does not depend on $\varepsilon$.

Finally, we end this subsection by
mentioning that other  iteration-complexity bounds have been established for the ADMM for any $\theta \in (0,(\sqrt{5}+1)/2)$ based on different
but related stopping criteria (see for example \cite{Gu2015,He2}) and to which the above comments apply in a similar manner.

\subsection{Proof of Theorem~\ref{th:maintheoADMM}}\label{subsec:proofAdmm}

In this subsection, we establish Theorem~\ref{th:maintheoADMM} by first showing that DR-ADMM is an
instance of Framework~3, and then translating Theorem~\ref{th:main} to the context of the DR-ADMM
to obtain the complexity bound \eqref{eq:iterRADMMtheta<1}.

The first result below establishes, as a consequence of some useful relations, that \eqref{eq:th_incADMMtheta<1}  is essentially
an invariance of the inner iterations of the DR-ADMM.

\begin{lemma} \label{pr:aux}
The $k$-th iterate $(s_k,y_k,x_k, \tilde{x}_k)$ obtained during a cycle of inner iterations satisfies
\begin{eqnarray}
0&\in& H(s_k-s_{k-1})+\left[ \partial g(s_k)-D^*\tilde{x}_k+{\mu}H(s_k-s_0)\right], \label{aux.0}\\[3mm]
0&\in&  (G+\beta C^*C)(y_k-y_{k-1})+\left[\partial f(y_k)-C^*\tx_k+\mu(G+\beta  C^*C)(y_k-y_0)\right],\label{aux.2}\\[3mm]
0&=&  \frac{1}{\theta\beta}(x_k-x_{k-1})+\left[Cy_k+Ds_k-c+ \frac{\mu}{\theta\beta} (\tilde x_k-x_0)\right],\label{aux.1}\\[3mm]
\tx_k-x_{k-1}&=&\beta C(y_{k}-y_{k-1}) +\frac{x_k-x_{k-1}}{\theta} \label{aux.3}
\end{eqnarray}
where $\mu$ is  the constant value of the smoothing parameter during this cycle.
As a consequence, the $7$-tuple $(s,y,\tilde x,\tilde b,\tilde a, \tilde q,\tilde p)$ obtained in step 2 of the DR-ADMM
satisfies the inclusion in~\eqref{eq:th_incADMMtheta<1}.
\end{lemma}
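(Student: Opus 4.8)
The plan is to derive each of the four displayed relations \eqref{aux.0}--\eqref{aux.3} directly from the optimality conditions of the subproblems \eqref{def:tsk-admm} and \eqref{def:tyk-admm} together with the explicit update formulas \eqref{defyhk}, \eqref{defxtk}, \eqref{defuk} and \eqref{admm:eqxk}, and then to assemble these into the claimed inclusion by identifying the residual $7$-tuple in step~2. I will treat the four relations one at a time, since each is essentially a first-order stationarity computation followed by algebraic substitution of the hatted quantities.

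\medskip
\noindent
\textbf{Deriving the inclusions.} First I would write the optimality condition for the $s$-subproblem \eqref{def:tsk-admm}: the stationarity condition reads
\[
0 \in \partial g(s_k) - D^*\hat x_{k-1} + \beta_1 D^*\bigl(Cy_{k-1}+Ds_k-c\bigr) + (1+\mu)H(s_k-\hat s_{k-1}).
\]
Using the definition of $\tilde x_k$ in \eqref{defxtk}, the first three terms collapse to $\partial g(s_k) - D^*\tilde x_k$. Then I substitute the definition of $\hat s_{k-1}$ from \eqref{defyhk} into $(1+\mu)H(s_k-\hat s_{k-1})$; since $(1+\mu)\hat s_{k-1} = s_{k-1}+\mu s_0$, this term rearranges to $H(s_k-s_{k-1}) + \mu H(s_k-s_0)$, which yields \eqref{aux.0} exactly. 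Relation \eqref{aux.2} follows by an entirely parallel computation on the $y$-subproblem \eqref{def:tyk-admm}: the stationarity condition gives $0\in \partial f(y_k) - C^*u_k + \beta_2 C^*(Cy_k+Ds_k-c) + (1+\mu)(G)(y_k-\hat y_{k-1})$, and here I will need to use the definition of $u_k$ in \eqref{defuk} to absorb the cross term into $-C^*\tilde x_k$ and simultaneously generate the $\beta C^*C$ contributions, splitting $(1+\mu)(G+\beta C^*C)(y_k-\hat y_{k-1})$ via \eqref{defyhk} as before. Relations \eqref{aux.1} and \eqref{aux.3} are purely algebraic: \eqref{aux.1} is just the update \eqref{admm:eqxk} divided by $\theta\beta$ and transposed, while \eqref{aux.3} combines \eqref{defxtk} with \eqref{admm:eqxk} by eliminating the common $Cy_{k-1}+Ds_k-c$ term.

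\medskip
\noindent
\textbf{Assembling the final inclusion.} With the four relations in hand, I would recall the definitions of $b_k,a_k,q_k,p_k$ from \eqref{defr} and of $\tilde b,\tilde a,\tilde q,\tilde p$ from \eqref{eq:rxry}. Rewriting \eqref{aux.0} as $H(s_{k-1}-s_k)-\mu H(s_k-s_0) \in \partial g(s_k)-D^*\tilde x_k$ and recognizing the left side as $H\tilde b$ (via $H b_k - \mu H(s_k-s_0) = H\tilde b$) gives the first row. The second row follows from \eqref{aux.2} after substituting $q_k=\beta C a_k$ and matching $G\tilde a + C^*\tilde q$; here the $\beta C^*C$ term in \eqref{aux.2} is precisely what produces the $C^*\tilde q$ contribution, and I must check that the $\mu$-corrections in \eqref{eq:rxry} line up. The third row is \eqref{aux.1} rewritten with $p_k = (x_{k-1}-x_k)/(\beta\theta)$ and the $\mu$-correction $-\mu(\tilde x_k-x_0)/(\beta\theta)$, matching $\tilde p$. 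Setting $(s,y,\tilde x)=(s_k,y_k,\tilde x_k)$ at termination then gives the inclusion in \eqref{eq:th_incADMMtheta<1}.

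\medskip
\noindent
\textbf{The main obstacle.} The routine part is the stationarity calculus, but \emph{the delicate step is the $y$-subproblem bookkeeping}: I expect \eqref{aux.2} to be the crux, because there both the penalty operator $\beta_2 C^*C$ and the seminorm operator $G$ act simultaneously, and the auxiliary variable $u_k$ has been defined in \eqref{defuk} precisely so that the extragradient-style correction $\beta_2 C(\hat y_{k-1}+\cdots)$ conspires to leave exactly $G+\beta C^*C$ (with the \emph{true} $\beta$, not $\beta_2$) multiplying $y_k-y_{k-1}$. Verifying that $\beta_1,\beta_2$ and $u_k$ combine to yield the clean coefficient $\beta$ rather than $\beta_1$ or $\beta_2$ is where the specific algebraic design of the DR-ADMM must be used carefully, and I would check the $\mu\to 0$ specialization as a sanity test since in that limit the scheme must reduce to standard ADMM with $\beta_1=\beta_2=\beta$.
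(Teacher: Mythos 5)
Your proposal is correct and follows essentially the same route as the paper's proof: each of \eqref{aux.0}--\eqref{aux.3} is obtained from the subproblem optimality conditions and the definitions \eqref{defyhk}, \eqref{defxtk}, \eqref{defuk}, \eqref{admm:eqxk}, with the $y$-subproblem bookkeeping (splitting $\beta_2(y_k-\hat y_{k-1})$ and $(1+\mu)G(y_k-\hat y_{k-1})$ via the definitions of $\beta_2$ and $\hat y_{k-1}$) being exactly the step the paper also works out in detail, and the final inclusion is assembled from \eqref{eq:rxry} just as you describe. Your identification of \eqref{aux.2} as the delicate point, and the $\mu\to 0$ sanity check, match the structure of the paper's argument.
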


\begin{proof}
From the optimality condition of  \eqref{def:tsk-admm}, we have
\begin{equation}\label{eqDStil}
0 \in \partial g(s_k)- D^*(\hat{x}_{k-1}-\beta_1(Cy_{k-1}+Ds_k-c))+(1+\mu)H(s_k-\hat s_{k-1}),
\end{equation}
which, combined with \eqref{defxtk} and  the definition of $\hat s_{k-1}$ in \eqref{defyhk}, yield  \eqref{aux.0}.
Now, from the optimality condition of \eqref{def:tyk-admm} and definition of $u_k$ in \eqref{defuk}, we obtain
\begin{align*}
0 &\in \partial f(y_k)-C^*u_k+\beta_2 C^*(Cy_k+Ds_k-c)+(1+\mu)G(y_k-\hat y_{k-1})\\[2mm]
   &= \partial f(y_k)-C^*[\tilde{x}_{k}+\beta_2 (C\hat{y}_{k-1}+Ds_k-c)]+\beta_2 C^*(Cy_k+Ds_k-c)+(1+\mu)G(y_k-\hat y_{k-1}) \\[2mm]
   & = \partial f(y_k)-C^*\tilde{x}_k+\beta_2 C^*C(y_{k}-\hat{y}_{k-1})+(1+\mu)G(y_k-\hat y_{k-1}).
\end{align*}
Also, the definition of  $\beta_2$ in step 1 of the DR-ADMM and the definition of $\hat{y}_{k-1}$ in \eqref{defyhk} yield
\[
\beta_2(y_k-\hat{y}_{k-1})=\beta(1+\mu)(y_k-\hat{y}_{k-1})=\beta(1+\mu)y_k-\beta(y_{k-1}+\mu y_0) =\beta(y_k-y_{k-1})+\mu\beta(y_k-y_0)
\]
and
\[
(1+\mu)G(y_k-\hat y_{k-1})=(1+\mu)Gy_k-G(y_{k-1}+\mu y_0)=G(y_k-y_{k-1})+\mu G(y_k-y_0).
\]
The last two equalities combined with the previous inclusion prove \eqref{aux.2}.
Moreover, \eqref{aux.1} follows immediately from \eqref{admm:eqxk}.
We now prove relation \eqref{aux.3}. Using the definition of  $\beta_1$ in step 1 of the DR-ADMM
and definitions  $\hat{x}_{k-1}$ and  $\tilde{x}_k$ in \eqref{defyhk} and \eqref{defxtk}, respectively, we obtain
\begin{align*}
\tx_k-x_{k-1}+\frac{\mu}{\theta}(\tx_k-x_0)&=\frac{1}{\theta}\left[(\theta+\mu)\tx_k-\left( \theta x_{k-1}+\mu x_0\right)\right]=\frac{\theta+\mu}{\theta}(\tilde{x}_k-\hat{x}_{k-1})\\[2mm]
&=  -\frac{(\theta+\mu)\beta_1}{\theta}(Cy_{k-1}+Ds_k-c)=  -\beta(Cy_{k-1}+Ds_k-c).
\end{align*}
Identity \eqref{aux.3} now follows by combining the previous relation and \eqref{aux.1}.
Finally, the inclusion in \eqref{eq:th_incADMMtheta<1} follows from  \eqref{aux.0}-\eqref{aux.1} and the  definitions $\tilde b$, $\tilde a$,  $\tilde q$ and  $\tilde p$ in \eqref{eq:rxry}. 
\end{proof}

Define the space
$\mathcal{Z}:=\mathcal{S}\times \Y\times\X$ and endow it with the inner product defined as
\[
\inner{z}{z'} := \langle s,s'\rangle_\mathcal{S}+ \langle y,y'\rangle_\Y+\langle x,x'\rangle_\X
\quad \forall z=(s,y,x), z'=(s,y,x).
\]
Since our approach towards proving Theorem \ref{th:maintheoADMM} is via showing that
DR-ADMM is a special instance of Framework~3, we need to introduce the elements required
by the setting of Section \ref{sec:imp.hpe}, namely, the  distance generating function $w: \Z \to [-\infty,\infty]$,
the convex set $Z \subset \dom w$ and  the seminorm $\|\cdot\|$ on $\Z$. 
Indeed, define $w: \Z \to \R$, the seminorm and set $Z$ as
\begin{equation}\label{df:norm_admm}
 \quad w(z):=\frac{1}{2}\|(s,y,x)\|^2, \quad
 \|z\|:=\left(\|s\|_{\mathcal{S},H}^2+\|y\|_{\Y,G}^2+\beta\|Cy\|_\X^2+\frac{1}{\beta\theta}\|x\|_\X^2\right )^{1/2},
\quad Z:=\Z
\end{equation}
for every $z=(s,y,x) \in \Z$.
 Clearly, the Bregman distance associated with $w$ is given by
\begin{equation}\label{df:dw_admm}
(dw)_z(z')=\frac{1}{2}\|s'-s\|_{\mathcal{S},H}^2+\frac{1}{2}\|y'-y\|_{\Y,G}^2+\frac{\beta}{2}\|C(y'-y)\|_\X^2 +\frac{1}{2\beta\theta}\|x'-x\|_\X^2
\end{equation}
for every $z=(s,y,x) \in \Z$ and $z'=(s',y',x') \in \mathcal{Z}$.
The following result shows that $w$, $\|\cdot\|$ and $Z$ defined above,
as well as the operator $T$ defined in~\eqref{FAB},
fulfill the assumptions of Section~\ref{sec:imp.hpe}.

\begin{lemma}\label{Breges}  Let function $w$, seminorm $\|\cdot\|$ and set $Z$ be as defined above. 
Then, the following statements hold: 
\begin{itemize}
\item [(a)]  the function $w$ is a $(1,1)$-regular distance generating function with respect to $(Z,\|\cdot\|)$;
\item [(b)] the set $Z_{\mu}^*$ as in \eqref{defZu}  where $z_0=(s_0,y_0,x_0)$ and $T$ is  as in  \eqref{FAB} is nonempty for every $\mu >0$.
\end{itemize}
\end{lemma}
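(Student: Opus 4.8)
For part (a) the plan is to realize $w$ as the quadratic induced by a single self-adjoint positive semidefinite operator and then quote Example~\ref{arth2}(b). Define $A:\Z\to\Z$ by
\[
A(s,y,x):=\left(Hs,\ (G+\beta C^*C)y,\ \tfrac{1}{\beta\theta}\,x\right).
\]
Since $H$ and $G$ are self-adjoint and positive semidefinite, $C^*C$ is positive semidefinite, and $\beta,\theta>0$, the operator $A$ is self-adjoint and positive semidefinite, and a one-line computation gives $\inner{Az}{z}=\|z\|^2$ for every $z=(s,y,x)\in\Z$, with $\|\cdot\|$ the seminorm in \eqref{df:norm_admm}. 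Thus $w(z)=\tfrac12\inner{Az}{z}$ and $\|\cdot\|=\inner{A\cdot}{\cdot}^{1/2}$, and Example~\ref{arth2}(b) immediately gives that $w$ is $(1,1)$-regular with respect to $(Z,\|\cdot\|)=(\Z,\|\cdot\|)$. I would also record that $\nabla w(z)=Az$, so that by \eqref{grad-d} one has $\nabla(dw)_{z_0}(z)=A(z-z_0)$, a fact needed in part (b).

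For part (b), I would first substitute $\nabla(dw)_{z_0}(z)=A(z-z_0)$ and the definition \eqref{FAB} of $T$ into the inclusion defining $Z_\mu^*$ in \eqref{defZu}, obtaining the coupled system
\[
0\in\partial g(s)-D^*x+\mu H(s-s_0),\quad 0\in\partial f(y)-C^*x+\mu(G+\beta C^*C)(y-y_0),\quad 0=Cy+Ds-c+\tfrac{\mu}{\beta\theta}(x-x_0),
\]
and then recognize it as the saddle-point system of the regularized Lagrangian $\mathcal{L}(s,y,x):=\hat g(s)+\hat f(y)-\inner{x}{Cy+Ds-c}-\tfrac{\mu}{2\beta\theta}\|x-x_0\|_\X^2$, where $\hat g(s):=g(s)+\tfrac{\mu}{2}\|s-s_0\|_{\mathcal{S},H}^2$ and $\hat f(y):=f(y)+\tfrac{\mu}{2}\big(\|y-y_0\|_{\Y,G}^2+\beta\|C(y-y_0)\|_\X^2\big)$. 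Because $\mathcal{L}$ is strongly concave in $x$, the maximization over $x$ is explicit, attained at $x=x_0-\tfrac{\beta\theta}{\mu}(Cy+Ds-c)$, and collapses the system to the unconstrained convex program $\min_{s,y}\Phi(s,y)$ with
\[
\Phi(s,y):=\hat g(s)+\hat f(y)+\tfrac{\beta\theta}{2\mu}\|Cy+Ds-c\|_\X^2-\inner{x_0}{Cy+Ds-c}.
\]
Any minimizer of $\Phi$, paired with the corresponding $x$, lies in $Z_\mu^*$, so it suffices to show that $\Phi$ attains its infimum.

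The hard part will be exactly this attainment, since the regularizing seminorm in \eqref{df:norm_admm} is degenerate (it is a genuine norm only when $H,G$ are positive definite and $C$ is injective), so $\Phi$ is in general not coercive and the strong-convexity shortcut mentioned after \eqref{defZu} --- invoking Proposition~\ref{existence} --- is unavailable. I would handle it through assumption {\bf B2}: choosing $\bar x\in\X$ with $(C^*\bar x,D^*\bar x)\in\ri(\dom f^*)\times\ri(\dom g^*)$ and splitting off the linear term as $-\inner{x_0}{Cy+Ds-c}=-\inner{D^*\bar x}{s}-\inner{C^*\bar x}{y}-\inner{x_0-\bar x}{Cy+Ds-c}+\inner{\bar x}{c}$, the estimates $g(s)-\inner{D^*\bar x}{s}\ge -g^*(D^*\bar x)$ and $f(y)-\inner{C^*\bar x}{y}\ge -f^*(C^*\bar x)$ (both finite, as $D^*\bar x\in\dom g^*$ and $C^*\bar x\in\dom f^*$) together with completing the square in the penalty show that $\Phi$ is bounded below. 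Attainment of the infimum then follows from the existence results for linearly constrained convex programs recorded in the appendix (Proposition~\ref{lem:auxSol} and Corollary~\ref{cr:auxSol}), whose governing hypothesis is precisely the relative-interior condition {\bf B2}; this is the step that rules out the residual ``flat'' recession directions left by the degenerate seminorm, via a recession/lineality analysis rather than coercivity. Properness of $\Phi$ is guaranteed by {\bf B1}, which furnishes a point of $T^{-1}(0)$ and in particular shows that $\dom g$ and $\dom f$ are nonempty. This yields $Z_\mu^*\neq\emptyset$ for every $\mu>0$, as required.
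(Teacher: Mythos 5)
Your proposal is correct, and part (a) coincides with the paper's proof: same operator $A(s,y,x)=(Hs,(G+\beta C^*C)y,x/(\beta\theta))$, same appeal to Example~\ref{arth2}(b). For part (b) you reach the same destination by a recognizably different packaging. The paper characterizes $Z_{\mu}^*$ by the optimality system \eqref{aux.121}, introduces a slack variable $u=c-Ds-Cy$, and considers the linearly constrained problem \eqref{prob:perturbedapendix} in $(s,y,u)$; since $u$ is free, a Slater point exists trivially, so Proposition~\ref{lem:auxSol}(a) and (c) (the former justified by {\bf B2}) yield an optimal solution together with a Lagrange multiplier $\bar x$, and the KKT conditions of that problem are precisely \eqref{aux.121} with $x=\bar x$. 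You instead exploit the strong concavity in $x$ created by the regularization to maximize out the multiplier in closed form, reduce to the unconstrained problem $\min\Phi(s,y)$, and recover $x$ from the explicit formula; attainment for $\Phi$ is then delegated to Corollary~\ref{cr:auxSol}, which internally performs exactly the paper's slack-variable maneuver and call to Proposition~\ref{lem:auxSol}(a). So both arguments run on the same engine ({\bf B2} feeding Proposition~\ref{lem:auxSol}), but yours never needs the multiplier-existence statement Proposition~\ref{lem:auxSol}(c), since the multiplier is explicit, while the paper's avoids your completing-the-square/stacking step. Two remarks for the write-up. First, to invoke Corollary~\ref{cr:auxSol} you must actually cast $\Phi$ (up to an additive constant) as $h+\frac12\|E(\cdot)-e\|^2$ with $h(s,y)=g(s)+f(y)$ and, e.g., $E(s,y)=\bigl(\sqrt{\mu}\,H^{1/2}s,\;\sqrt{\mu}\,G^{1/2}y,\;\sqrt{\mu\beta}\,Cy,\;\sqrt{\beta\theta/\mu}\,(Ds+Cy)\bigr)$; then $\mathrm{Im}(E^*)$ contains every pair $(D^*x,C^*x)$, so {\bf B2} gives $\ri(\dom h^*)\cap \mathrm{Im}(E^*)\neq\emptyset$ --- this verification is the one nontrivial detail your sketch leaves implicit, and once it is in place your separate boundedness-below estimate becomes redundant (existence of a minimizer already implies it). Second, your system correctly carries the term $\mu G(y-y_0)$ in the $y$-inclusion; the paper's \eqref{aux.121} and \eqref{prob:perturbedapendix} omit it (evidently an oversight, since $\nabla(dw)_{z_0}$ computed from \eqref{df:dw_admm} contains $G(y-y_0)$), so your bookkeeping is the faithful one.
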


\begin{proof}
(a) This statement  follows directly from   Example~\ref{arth2}(b) with $A$ given by
\[
A(s,y,x)=(Hs,(G+\beta C^*C) y,x/(\beta\theta)) \quad \forall (s,y,x)\in \Z.
\]

(b) The proof of this statement  is given in Apendix~\ref{nonemsolu}.
\end{proof}

The next result gives a sufficient condition for the  sequence
generated by a cycle of inner iterations of the DR-ADMM to be an implementation of Framework~2. 

\begin{lemma}\label{lemdeltaxy} Let $\sigma \in [0,1)$ and $\tau \in (0,1)$ be given and
consider the operator $T$  and  Bregman distance $dw$ as in \eqref{FAB} and \eqref{df:dw_admm}, respectively.
Let $\{(s_k,y_k,x_k,\tilde{x}_k,b_k,a_k,q_k,p_k)\}$  be the sequence generated during a cycle of inner iterations of  DR-ADMM with parameter $\D>0$ and define
\begin{equation}\label{def:zk}
 \quad z_{k-1}=(s_{k-1},y_{k-1},x_{k-1}), \quad \tilde{z}_k=(s_k,y_k,\tilde{x}_k), \quad \lambda_k=1, \quad \epsilon_k=0 \quad \forall k\geq 1.
\end{equation}
Then, the following statements hold:
\begin{itemize}
\item [(a)] the sequence $\{(z_k,\tilde{z}_k,\lambda_k,\varepsilon_k)\}$ satisfies  inclusion \eqref{eq:ec.2} and
 the left hand side $r_k$ of this inclusion in terms of $b_k$, $a_k$, $p_k$ and $q_k$ is given by $r_k=(Hb_k,Ga_k+C^*q_k,p_k)$; 
 \item [(b)]  if there exists a  sequence $\{\eta_k\}$ such that  $\{(b_k,a_k,q_k,p_k,\eta_k)\}$ satisfies
 \begin{align*}
[\sigma(1+\theta)-1& ]\frac{\|q_k\|_\X^2}{2\beta\theta}+
\left[\sigma-(\theta-1)^2\right]\frac{\beta\|p_k\|_\X^2}{2\theta}    +\frac{(\sigma +\theta-1)}{\theta}\langle q_k,p_k \rangle_\X \\
& \geq \eta_k-(1-\tau)\eta_{k-1} - \sigma \frac{\|b_k\|_{\mathcal{S},H}^2}{2} - \sigma \frac{\|a_k\|_{\Y,G}^2}{2},
 \end{align*}
 then the sequence $\{(z_k,\tilde{z}_k,\lambda_k,\varepsilon_k,\eta_k)\}$ satisfies the error condition \eqref{eq:es.2};
\item [(c)] condition \eqref{innerstop} is equivalent to $\|r_k\|^*\leq\rho/2$  where 
$r_k$ is as in \eqref{eq:ec.2}  and $\|\cdot\|$ is the seminorm defined in \eqref{df:norm_admm}.
\end{itemize}
As a consequence, if  the assumption of  (b) is satisfied then the sequence $\{(z_k,\tilde{z}_k,\lambda_k,\varepsilon_k,\eta_k)\}$ is an implementation of   Framework 2  with input $z_0=(s_0,y_0,x_0)$, $(\eta_0, \D)$ and $\lambda=1$.

Moreover, if every cycle of inner iterations of DR-ADMM  satisfies the assumption of  (b), then DR-ADMM is an instance of Framework~3.
\end{lemma}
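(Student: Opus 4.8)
The plan is to verify parts (a)--(c) by direct computation, exploiting the fact that the distance generating function in \eqref{df:norm_admm} is the quadratic form $w(z)=\tfrac12\inner{Az}{z}$ with $A(s,y,x)=(Hs,(G+\beta C^*C)y,x/(\beta\theta))$, exactly the operator identified in the proof of Lemma~\ref{Breges}(a). Consequently $\nabla w(z)=Az$, so by \eqref{grad-d} one has $\nabla(dw)_{z}(z')=A(z'-z)$ and $(dw)_{z}(z')=\tfrac12\inner{A(z'-z)}{z'-z}=\tfrac12\|z'-z\|^2$, consistent with \eqref{df:dw_admm}. With $\lambda_k=1$ and $\varepsilon_k=0$ fixed in \eqref{def:zk}, all three statements reduce to bookkeeping that rewrites $r_k$, $(dw)_{z_k}(\tz_k)$, $(dw)_{z_{k-1}}(\tz_k)$ and $\|r_k\|^*$ in terms of the residuals $b_k,a_k,q_k,p_k$ of \eqref{defr}, using the relations $q_k=\beta C a_k$ and $x_{k-1}-x_k=\beta\theta p_k$.

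For (a), I would compute $r_k=\nabla(dw)_{z_k}(z_{k-1})=A(z_{k-1}-z_k)$ and insert $z_{k-1}-z_k=(b_k,a_k,\beta\theta p_k)$; applying $A$ and $q_k=\beta C a_k$ gives $r_k=(Hb_k,Ga_k+C^*q_k,p_k)$. The claimed inclusion \eqref{eq:ec.2}, whose right side is $\mu\nabla(dw)_{z_0}(\tz_k)+T(\tz_k)=\mu A(\tz_k-z_0)+T(\tz_k)$, then splits coordinatewise into three relations which, after substituting $b_k,a_k,q_k,p_k$, are precisely \eqref{aux.0}, \eqref{aux.2} and \eqref{aux.1} already established in Lemma~\ref{pr:aux}. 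Thus (a) introduces no content beyond this identification.

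For (b), the crux is evaluating the two Bregman distances in \eqref{eq:es.2}. Since $z_k$ and $\tz_k$ differ only in the last coordinate, \eqref{df:dw_admm} collapses to $(dw)_{z_k}(\tz_k)=\tfrac1{2\beta\theta}\|\tx_k-x_k\|_\X^2$, and I would use \eqref{aux.3} together with \eqref{defr} to write $\tx_k-x_k=-q_k+\beta(\theta-1)p_k$ and $\tx_k-x_{k-1}=-(q_k+\beta p_k)$. Expanding $(dw)_{z_k}(\tz_k)$ and $(dw)_{z_{k-1}}(\tz_k)$ via \eqref{df:dw_admm} produces expressions in $\|b_k\|_{\mathcal{S},H}^2,\|a_k\|_{\Y,G}^2,\|q_k\|_\X^2,\|p_k\|_\X^2$ and $\inner{q_k}{p_k}_\X$; substituting into \eqref{eq:es.2} (with $\lambda_k=1,\varepsilon_k=0$) and rearranging reproduces verbatim the inequality hypothesized in (b), so the error condition is in fact equivalent to it. I expect this cross-term accounting to be the main obstacle: the coefficient of $\|q_k\|_\X^2$ in $(dw)_{z_{k-1}}(\tz_k)$ receives contributions both from the $\beta\|C(\cdot)\|_\X^2$ term and from the $x$-term, and the $\inner{q_k}{p_k}_\X$ term must be tracked carefully to match the coefficients $[\sigma(1+\theta)-1]/(2\beta\theta)$, $[\sigma-(\theta-1)^2]\beta/(2\theta)$ and $(\sigma+\theta-1)/\theta$ exactly.

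For (c), I would observe that $r_k=A(z_{k-1}-z_k)$ lies in the range of $A$, so by the dual-seminorm identity of Proposition~\ref{propdualnorm}, $\|A\xi\|^*=\inner{A\xi}{\xi}^{1/2}=\|\xi\|$; hence $\|r_k\|^*=\|z_{k-1}-z_k\|=\|(b_k,a_k,\beta\theta p_k)\|$, which upon inserting $q_k=\beta C a_k$ into \eqref{df:norm_admm} equals the left-hand side of \eqref{innerstop}, giving the equivalence. Finally, for the concluding identification: (a) supplies \eqref{eq:ec.2} and (b), under its hypothesis, supplies \eqref{eq:es.2}, while (c) together with $\varepsilon_k=0\le\varepsilon$ shows the inner stopping rule \eqref{innerstop} coincides with step~2 of Framework~2, so a cycle of inner iterations is an implementation of Framework~2 with $z_0=(s_0,y_0,x_0)$, $(\eta_0,\D)$ and $\lambda=1$. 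For Framework~3 I would compute $\tilde r_k=r_k-\mu\nabla(dw)_{z_0}(\tz_k)=r_k-\mu A(\tz_k-z_0)$ and check, via \eqref{eq:rxry} and $\tilde q=\beta C\tilde a$, that $\tilde r_k=(H\tilde b,G\tilde a+C^*\tilde q,\tilde p)=A(\tilde b,\tilde a,\beta\theta\tilde p)$; the same dual-seminorm identity then yields $\|\tilde r_k\|^*=(\|\tilde b\|_{\mathcal{S},H}^2+\|\tilde a\|_{\Y,G}^2+\tfrac1\beta\|\tilde q\|_\X^2+\beta\theta\|\tilde p\|_\X^2)^{1/2}$, so the outer test \eqref{Scri2} is exactly $\|\tilde r_k\|^*\le\rho$. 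Since DR-ADMM's doubling of $\D$ in step~2 mirrors step~2 of Framework~3, DR-ADMM is an instance of Framework~3.
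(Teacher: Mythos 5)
Your proposal is correct and takes essentially the same route as the paper's proof: you identify $r_k=A(z_{k-1}-z_k)$ with $A(s,y,x)=(Hs,(G+\beta C^*C)y,x/(\beta\theta))$ so that \eqref{eq:ec.2} reduces coordinatewise to \eqref{aux.0}--\eqref{aux.1}, you expand $(dw)_{z_k}(\tilde z_k)$ and $(dw)_{z_{k-1}}(\tilde z_k)$ via \eqref{aux.3} and \eqref{defr} to match the coefficients in (b), and you use Proposition~\ref{propdualnorm}(a) together with $\tilde r=(H\tilde b,G\tilde a+C^*\tilde q,\tilde p)$ for (c) and the Framework~2/3 identification, exactly as the paper does. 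The only (harmless) refinements are your explicit remark that \eqref{eq:es.2} is in fact equivalent to, not merely implied by, the inequality in (b), and your explicit handling of the automatic test $\varepsilon_k=0\leq\varepsilon$, which the paper leaves implicit.
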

\begin{proof}
(a) These statements  follows from relations    \eqref{aux.0}-\eqref{aux.1} and 
definitions  in \eqref{defr}, \eqref{FAB} and \eqref{df:dw_admm}.

(b) Using  \eqref{defr} and  \eqref{aux.3}, we obtain
\begin{equation}\label{eq:deltas}
x_{k-1}-\tilde{x}_k= q_k+\beta p_k,\quad x_k-\tilde x_k =q_k +(1-\theta)\beta p_k.
\end{equation}
Hence, it follows from \eqref{df:dw_admm} and \eqref{def:zk} that 
\begin{align*}\label{eq:lefthpecond_eta}
(dw)_{ z_{k}}(\tilde{z}_k)  +\lambda_k\varepsilon_k&=\frac{1}{2\beta\theta}\|\tx_k-x_k\|_\X^2 \\
 &=\frac{1}{2\beta\theta}\left[\|q_k\|_\X^2+\beta^2(\theta-1)^2\|p_k\|_\X^2+ 2(1-\theta)\beta\langle q_k,p_k \rangle_\X\right].
\end{align*}                                             
On the other hand, using  \eqref{df:dw_admm}-\eqref{eq:deltas} and definitions of $b_k$, $a_k$, $q_k$ in \eqref{defr}, we obtain           
\begin{equation*}\label{eq:righpecond_eta}
\begin{array}{rcl} 
 (dw)_{ z_{k-1}}(\tilde{z}_k)     &=& \frac{1}{2}\|s_{k-1}-s_k\|_{\mathcal{S},H}^2+\frac{1}{2}\|y_{k-1}-y_k\|_{\Y,G}^2+ \frac{\beta}{2} \|C(y_{k-1}-y_{k})\|_\X^2+\frac{1}{2\beta\theta}\|x_{k-1}-\tilde x_k\|_\X^2 \\[3mm]
                                                                                          &=&\frac{1}{2}\|b_k\|_{\mathcal{S},H}^2+\frac{1}{2}\|a_k\|_{\Y,G}^2+\frac{1}{2\beta}\|q_k\|_\X^2+\frac{1}{2\beta\theta}\| q_k+\beta p_k\|_\X^2\\[3mm]
                                                                                          &=&\frac{1}{2}\|b_k\|_{\mathcal{S},H}^2 +\frac{1}{2}\|a_k\|_{\Y,G}^2+\frac{(\theta+1)}{2\beta\theta}\|q_k\|_\X^2+\frac{ \beta}{2\theta}\|p_k\|_\X^2+\frac{1}{\theta}\langle q_k,p_k\rangle_\X.
\end{array}
\end{equation*} 
Statement (b) now follows immediately from the above two identities.

(c) First of all, note that $\|\cdot\|^2= \inner{A(\cdot)}{\cdot}$ where $A(s,y,x)=(Hs,(\beta C^*C+G) y,x/(\beta\theta))$ for every $(s,y,x)\in~\Z$. Hence, it follows from the identity in (a),   \eqref{defr}, the definition of $A$ and  Proposition~\ref{propdualnorm}(a) that
\begin{align*}
\|r_k\|^*&=\|(Hb_k,Ga_k+C^*q_k,p_k)\|^*=\|A(b_k,a_k,x_{k-1}-x_k)\|^*  \\
&=  \|(b_k,a_k,x_{k-1}-x_k)\| =\left(\|b_k\|_{\mathcal{S},H}^2+\| a_k\|_{\Y,G}^2+ \frac{1}{\beta} \norm{q_k}_\X^2+\beta\theta\|p_k\|_\X^2\right)^{1/2}
\end{align*}
from which statement (c) follows. 

To show the last statement of the lemma,
first note that item (a),  \eqref{defr}, \eqref{eq:rxry} and \eqref{df:dw_admm}
 imply that $\tilde r = (H\tilde b, G\tilde a+C^*\tilde q, \tilde p)$. Now, a similar argument as above using definition of $\tilde p$ in 
 \eqref{eq:rxry},  the definition of $A$,  Proposition~\ref{propdualnorm}(a) imply
 that $\|\tilde r\|^* = (\|\tilde b\|_{\mathcal{S},H}^2+\| \tilde a\|_{\Y,G}^2+\frac{1}{\beta}\norm{\tilde q}_\X^2+\beta\theta\norm{\tilde p}_\X^2)^{1/2}$,
from which the last statement of the lemma follows.
\end{proof}

In view of Lemma \ref{lemdeltaxy}, it suffices to show that DR-ADMM satisfies the assumption of Lemma~\ref{lemdeltaxy}(b)
in order to show that it is an instance of Framework~3.
We will prove the latter fact by considering two cases, namely, whether the stepsize
$\theta$ is in $(0,1)$ or in $[1,(\sqrt{5}+1)/2)$. The next result consider the case in
which $\theta \in (0,1)$ and Lemma \ref{th:hpeheta>1} below considers the other case.

\begin{lemma}\label{th:hpeheta<1} Assume that the DR-ADMM stepsize $\theta\in (0,1)$.  Let  $\{(b_k,a_k,q_k,p_k)\}$ be the sequence generated during a cycle of inner iterations of  DR-ADMM with parameter $\D>0$ and define $\eta_k=0$ for every $k\geq 0$.
Then,  the sequence $\{(b_k,a_k,q_k,p_k,\eta_k)\}$
satisfies the assumption of Lemma~\ref{lemdeltaxy}(b) with  $\sigma=\theta+(\theta-1)^2\in (0,1)$ and any $\tau\in (0,1)$.
\end{lemma}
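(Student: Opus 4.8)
The plan is to substitute the prescribed value $\sigma = \theta + (\theta-1)^2$ directly into the three coefficients appearing on the left-hand side of the inequality in the assumption of Lemma~\ref{lemdeltaxy}(b) and simplify. Writing $\sigma = \theta^2 - \theta + 1$, a short computation gives
\[
\sigma(1+\theta)-1 = \theta^3, \qquad \sigma - (\theta-1)^2 = \theta, \qquad \sigma + \theta - 1 = \theta^2,
\]
so that, after dividing the cross-term coefficient by $\theta$, the entire left-hand side collapses to
\[
\frac{\theta^2}{2\beta}\|q_k\|_\X^2 + \frac{\beta}{2}\|p_k\|_\X^2 + \theta \inner{q_k}{p_k}_\X = \frac{1}{2\beta}\|\theta q_k + \beta p_k\|_\X^2 \ge 0 .
\]

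The key observation is that the particular choice $\sigma = \theta + (\theta-1)^2$ is precisely what turns the left-hand side into a perfect square, hence a nonnegative quantity. Since the lemma sets $\eta_k = \eta_{k-1} = 0$, the right-hand side of the required inequality reduces to $-\sigma(\|b_k\|_{\mathcal{S},H}^2 + \|a_k\|_{\Y,G}^2)/2$, which is nonpositive because $\sigma > 0$ (verified below) and $\|\cdot\|_{\mathcal{S},H}$, $\|\cdot\|_{\Y,G}$ are seminorms. Therefore the inequality holds trivially, the left-hand side being nonnegative while the right-hand side is nonpositive, and this conclusion is independent of the choice of $\tau \in (0,1)$.

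It then remains to verify that $\sigma = \theta^2 - \theta + 1$ lies in $(0,1)$ for every $\theta \in (0,1)$. I would do this by inspecting the parabola $\theta \mapsto \theta^2 - \theta + 1$, whose minimum over $\R$ equals $3/4$ at $\theta = 1/2$ and whose value at each of the endpoints $\theta = 0$ and $\theta = 1$ equals $1$; hence $\sigma \in [3/4, 1)$ on the open interval $(0,1)$, which in particular confirms $\sigma > 0$ as used above.

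There is no substantive obstacle in this argument: the entire proof is an algebraic simplification together with the recognition of a perfect square. The only point requiring care is the bookkeeping in the coefficient computation---specifically, confirming that the cross-term coefficient $\sigma + \theta - 1 = \theta^2$, after division by $\theta$, combines with the two squared-norm coefficients to complete the square $\|\theta q_k + \beta p_k\|_\X^2$. Once this identity is in hand, nonnegativity of the left-hand side together with nonpositivity of the right-hand side (via $\eta_k \equiv 0$) immediately closes the argument.
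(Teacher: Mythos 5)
Your proof is correct and follows essentially the same route as the paper: substituting $\sigma=\theta^2-\theta+1$ turns the three coefficients into $\theta^3$, $\theta$, $\theta^2$, the left-hand side collapses to the perfect square $\frac{1}{2\beta}\|\theta q_k+\beta p_k\|_\X^2\ge 0$, and with $\eta_k\equiv 0$ the right-hand side is nonpositive. The additional check that $\sigma\in[3/4,1)$ for $\theta\in(0,1)$ is a nice touch that the paper leaves implicit.
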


\begin{proof} Using the definition of $\sigma$, we  have
\begin{align*}\label{eq:desMatrizM}
 [\sigma(1&+\theta) -1] \frac{\|q_k\|_\X^2}{2\beta\theta}  +  \left[\sigma-(\theta-1)^2\right]\frac{\beta\|p_k\|_\X^2}{2\theta}     +\frac{(\sigma +\theta-1)}{\theta}\langle q_k,p_k \rangle_\X\\&= 
\frac{\theta^2}{2\beta} \|q_k\|_\X^2+\frac{\beta\|p_k\|_\X^2}{2}   +\theta\langle q_k,p_k \rangle_\X= 
\frac{1}{2\beta}\|\theta q_k+\beta p_k\|_\X^2\geq 0.
 \end{align*} 
 Hence the lemma follows  due to the definition of $\{\eta_k\}$.
 \end{proof}
 Before handling the other case in which  $\theta \in [1,(\sqrt{5}+1)/2)$, we  first establish  the following technical result. 
\begin{lemma}\label{lem:deltak}Consider the sequence $\{(y_k,x_k,\tilde{x}_k,a_k,q_k,p_k)\}$ generated during a cycle of inner iterations of  DR-ADMM with parameter $\D>0$. Then, the following statements hold:
\begin{itemize}
\item [(a)] if  $\theta \in [1,2)$, then 
$$
\|q_1\|_\X\|p_1\|_\X +\|a_1\|_{\Y,G}^2 \leq \frac{8\theta\max\{\beta,\beta^{-1}\}}{2-\theta}d_0
$$ 
where $d_0$ is as in \eqref{def:d0admm};
\item [(b)] for any  $k\geq 2$, we have
\begin{equation}\label{eq:deltak}
\langle q_k,p_k\rangle_\X\geq (1-\theta)\langle q_k, p_{k-1}\rangle_\X+\frac{1}{2}\|a_k\|_{\Y,G}^2-\frac{1}{2}\|a_{k-1}\|_{\Y,G}^2.
\end{equation}

\end{itemize}
\end{lemma}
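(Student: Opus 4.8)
The two parts call for genuinely different mechanisms, so I would treat them separately: part (b) is a one-step monotonicity estimate linking the dual-type quantities at iterations $k$ and $k-1$, whereas part (a) is a base-case estimate tying the first-iteration quantities to the global constant $d_0$ of \eqref{def:d0admm}.

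For part (b), the engine is monotonicity of $\partial f$ across two consecutive inner iterates. Relation \eqref{aux.2} exhibits, for each index $j$, a subgradient $C^*\tilde{x}_j - N(y_j-y_{j-1}) - \mu N(y_j-y_0)\in\partial f(y_j)$, where I abbreviate $N:=G+\beta C^*C$. Using $y_j-y_{j-1}=-a_j$ and applying monotonicity of $\partial f$ between $y_k$ and $y_{k-1}$, I would obtain $\langle C^*(\tilde{x}_k-\tilde{x}_{k-1}),a_k\rangle + \langle N(a_k-a_{k-1}),a_k\rangle + \mu\langle Na_k,a_k\rangle\le 0$; since $N\succeq 0$ the last term is nonnegative and may be dropped. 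Next I would substitute the increment $\tilde{x}_k-\tilde{x}_{k-1}$, which \eqref{eq:deltas} together with $x_{j-1}-x_j=\beta\theta p_j$ rewrites as $\beta(1-\theta)p_{k-1}-\beta p_k - q_k + q_{k-1}$, and convert $C^*a_k$ into $q_k/\beta$ via \eqref{defr}. The terms produced by the $\beta C^*C$ part of $N$ (namely $\tfrac1\beta\langle q_{k-1},q_k\rangle_\X$ and $\tfrac1\beta\|q_k\|_\X^2$) cancel against identical terms on the left, leaving exactly $\langle q_k,p_k\rangle_\X \ge (1-\theta)\langle q_k,p_{k-1}\rangle_\X - \langle Ga_{k-1},a_k\rangle + \|a_k\|_{\Y,G}^2$. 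Finally, self-adjointness and positive semidefiniteness of $G$ give the elementary bound $\langle Ga_{k-1},a_k\rangle\le\tfrac12(\|a_{k-1}\|_{\Y,G}^2+\|a_k\|_{\Y,G}^2)$, and inserting it yields \eqref{eq:deltak}.

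For part (a) I would first extract a one-step descent for the initial iterate. Writing $r_1=\mu\nabla(dw)_{z_0}(\tilde{z}_1)+t_1$ with $t_1\in T(\tilde{z}_1)$ (from \eqref{eq:ec.2} with $\varepsilon_1=0$), applying the identity of Lemma~\ref{lema_desigualdades}(a) with $k=1$ and $z=z^*$ for an arbitrary $z^*\in T^{-1}(0)$, and using monotonicity of $T$ against $0\in T(z^*)$ together with the three-point identity \eqref{equacao_taylor} to handle the regularization term, I obtain $(dw)_{z_0}(\tilde{z}_1)-(dw)_{z_1}(\tilde{z}_1)\le (dw)_{z_0}(z^*)$. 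Evaluating both Bregman distances through \eqref{df:dw_admm} and \eqref{eq:deltas} (with $\tilde{z}_1=(s_1,y_1,\tilde{x}_1)$, $z_0=(s_0,y_0,x_0)$, $z_1=(s_1,y_1,x_1)$) turns the left side into $\tfrac12\|b_1\|_{\mathcal{S},H}^2+\tfrac12\|a_1\|_{\Y,G}^2+\tfrac{1}{2\beta}\|q_1\|_\X^2+\tfrac{(2-\theta)\beta}{2}\|p_1\|_\X^2+\langle q_1,p_1\rangle_\X$; since the right side is independent of $z^*$, taking the infimum replaces it by $d_0$.

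The remaining step is to turn this descent into the product bound, and this is where I expect the main obstacle. The cross term $\langle q_1,p_1\rangle_\X$ may be negative, and the quadratic form $\tfrac{1}{2\beta}\|q_1\|^2+\langle q_1,p_1\rangle+\tfrac{(2-\theta)\beta}{2}\|p_1\|^2$ is indefinite for $\theta\ge 1$, so the descent alone does \emph{not} control $\|q_1\|\|p_1\|$. I would therefore supply a complementary first-step relation by pairing the inclusion \eqref{aux.2} at $k=1$ with the multiplier inclusion $C^*x^*\in\partial f(y^*)$ at the solution (the $k=1$ analogue of the monotonicity step of part (b)), upgrading the descent to individual control of $\|q_1\|_\X$ and $\|p_1\|_\X$ by multiples of $d_0$. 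A weighted application of $\|q_1\|\|p_1\|\le\tfrac{\gamma}{2}\|q_1\|^2+\tfrac{1}{2\gamma}\|p_1\|^2$ then yields the product bound, with the factor $(2-\theta)^{-1}$ originating in the coefficient $\tfrac{(2-\theta)\beta}{2}$ on $\|p_1\|^2$ and the factor $\max\{\beta,\beta^{-1}\}$ coming from rescaling between the unweighted product $\|q_1\|\|p_1\|$ and the $\beta$-weighted norms inside $d_0$. Pinning down these constants, together with the first-step cross-term control, is the delicate part I expect to require the most care.
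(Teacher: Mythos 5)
Your part (b) is correct and is in substance the paper's own argument: the paper absorbs the $\mu$-terms of \eqref{aux.2} into the auxiliary function $f_{\mu,\beta}(y)=f(y)+(\beta\mu/2)\|C(y-y_0)\|_\X^2+(\mu/2)\|y-y_0\|_{\Y,G}^2$ and invokes monotonicity of $\partial f_{\mu,\beta}$, which is exactly your ``monotonicity of $\partial f$ plus dropping the nonnegative term $\mu\langle Na_k,a_k\rangle$'' (in your notation $N=G+\beta C^*C$); the remaining algebra, including the final bound $\langle Ga_{k-1},a_k\rangle_\Y\le\frac12\|a_{k-1}\|_{\Y,G}^2+\frac12\|a_k\|_{\Y,G}^2$, coincides.

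Part (a) is where the genuine gap lies, and you flagged it yourself. Your descent identity and its explicit left-hand side are correct (modulo a harmless $(1+\mu)$ factor on the right-hand side that your handling of the regularization term actually produces), and you are right that for $\theta\ge1$ it cannot control $\|q_1\|_\X\|p_1\|_\X$. But the repair you sketch is not viable as stated: monotonicity of $\partial f$ between $y_1$ and a solution $y^*$ yields a \emph{single} scalar inequality involving only $\tilde x_1$ and $y_1$ --- the iterate $x_1$ never enters it --- so it cannot give ``individual control'' of $\|p_1\|_\X=\frac{1}{\beta\theta}\|x_0-x_1\|_\X$, and in any case one scalar inequality cannot bound two norms separately.

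The paper's mechanism is different, and it contains precisely the idea your proposal lacks: a bootstrap against the \emph{regularized} solution $z_\mu^*\in Z_\mu^*$ rather than $z^*\in T^{-1}(0)$. Concretely: (i) by the triangle inequality and $2ab\le a^2+b^2$ alone (no descent inequality at all), $\|q_1\|_\X\|p_1\|_\X+\|a_1\|_{\Y,G}^2\le 4\max\{\beta,\beta^{-1}\}\left[(dw)_{z_0}(z_\mu^*)+(dw)_{z_1}(z_\mu^*)\right]$; (ii) Lemma~\ref{lema_desigualdades}(a) with $z=z_\mu^*$, where $\langle r_1,\tilde z_1-z_\mu^*\rangle\ge0$ holds cleanly because $r_1$ and $0$ belong to the monotone operator $\mu\nabla(dw)_{z_0}+T$ at $\tilde z_1$ and $z_\mu^*$ respectively, gives $(dw)_{z_1}(z_\mu^*)\le(dw)_{z_0}(z_\mu^*)+\left[(dw)_{z_1}(\tilde z_1)-(dw)_{z_0}(\tilde z_1)\right]$; completing the square in your own explicit formula shows the bracket is at most $\frac{(\theta-1)\beta}{2}\|p_1\|_\X^2$, and --- this is the key step --- that remainder is bounded \emph{again} by $\frac{2(\theta-1)}{\theta}\left[(dw)_{z_0}(z_\mu^*)+(dw)_{z_1}(z_\mu^*)\right]$, i.e.\ by the very quantities being estimated, so that solving the resulting self-referential inequality (possible exactly because $\theta<2$) yields $(dw)_{z_1}(z_\mu^*)\le\frac{3\theta-2}{2-\theta}(dw)_{z_0}(z_\mu^*)$; (iii) Lemma~\ref{lm:dz} then replaces $(dw)_{z_0}(z_\mu^*)$ by $d_0$. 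This bootstrap, not a weighted Cauchy--Schwarz on the product, is where the $(2-\theta)^{-1}$ factor really comes from; without it, or an equivalent a priori bound on $(dw)_{z_1}(z_\mu^*)$, your part (a) cannot be completed.
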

\begin{proof}
The proof of this lemma is given in Appendix~\ref{nodo}.
\end{proof}

In contrast to the case in which $\theta \in (0,1)$, the following result shows that the case in
which  $\theta \in [1,(\sqrt{5}+1)/2)$ requires a non-trivial choice of sequence $\{\eta_k\}$,
and hence uses the full generality of the approach of Section \ref{sec:imp.hpe}.

\begin{lemma} \label{th:hpeheta>1}
Assume that the DR-ADMM stepsize $\theta\in [1,(\sqrt{5}+1)/2)$ and consider the sequence  $\{(b_k,a_k,q_k,p_k)\}$  generated during a cycle of inner iterations of DR-ADMM with parameter $\D>0$.
Then,  there exist $\sigma \in [1/2,1)$ and $\tau \in (0,1/2]$ such that the sequence $\{(b_k,a_k,q_k,p_k,\eta_k)\}$ where $\{\eta_k\}$ is defined as 
\begin{equation*}
 \eta_0=\frac{4(\sigma+\theta-1)\max\{\beta,\beta^{-1}\}}{(2-\theta)(1-\tau)}d_0,\quad    \eta_k= \frac{[\sigma-(\theta-1)^2]\beta}{2\theta}\|p_k\|_\X^2+\frac{\sigma+\theta-1}{2\theta(1-\tau)}\|a_k\|_{\Y,G}^2 \quad \forall k\geq 1
\end{equation*}
satisfies the assumption of  Lemma~\ref{lemdeltaxy}(b). 
\end{lemma}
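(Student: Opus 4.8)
The plan is to verify, for every $k\ge 1$, the single inequality required by Lemma~\ref{lemdeltaxy}(b) after substituting the proposed sequence $\{\eta_k\}$. The first step is a purely algebraic reduction. The left-hand side of that inequality is $[\sigma(1+\theta)-1]$ times $\|q_k\|_\X^2/(2\beta\theta)$, plus $[\sigma-(\theta-1)^2]$ times $\beta\|p_k\|_\X^2/(2\theta)$, plus $(\sigma+\theta-1)/\theta$ times $\langle q_k,p_k\rangle_\X$. I would note that the coefficient of $\|p_k\|_\X^2$ inside $\eta_k$ is exactly $[\sigma-(\theta-1)^2]\beta/(2\theta)$, i.e.\ it coincides with the $\|p_k\|_\X^2$ coefficient on the left, so these terms cancel identically; and since $\sigma\|b_k\|_{\mathcal{S},H}^2/2\ge 0$ may be discarded, the claim reduces to showing that a residual quadratic in $q_k,p_k,a_k$ together with $(1-\tau)\eta_{k-1}$ is nonnegative. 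This isolates the cross term $(\sigma+\theta-1)/\theta\,\langle q_k,p_k\rangle_\X$ as the only indefinite contribution.

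For the generic step $k\ge 2$, I would feed in Lemma~\ref{lem:deltak}(b), which replaces $\langle q_k,p_k\rangle_\X$ by $(1-\theta)\langle q_k,p_{k-1}\rangle_\X+\tfrac12\|a_k\|_{\Y,G}^2-\tfrac12\|a_{k-1}\|_{\Y,G}^2$. Two things then happen by design: the $\|a_{k-1}\|_{\Y,G}^2$ produced here is cancelled exactly by the $\|a_{k-1}\|_{\Y,G}^2$ sitting inside $(1-\tau)\eta_{k-1}$, and the $\|p_{k-1}\|_\X^2$ term inside $(1-\tau)\eta_{k-1}$ supplies the quadratic in $p_{k-1}$ that was missing. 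What remains is a genuine $2\times 2$ quadratic form in the pair $(q_k,p_{k-1})$, namely $[\sigma(1+\theta)-1]\|q_k\|_\X^2/(2\beta\theta)+(\sigma+\theta-1)(1-\theta)/\theta\,\langle q_k,p_{k-1}\rangle_\X+(1-\tau)[\sigma-(\theta-1)^2]\beta\|p_{k-1}\|_\X^2/(2\theta)$, together with a leftover multiple of $\|a_k\|_{\Y,G}^2$. I would reduce its nonnegativity to two scalar conditions: that the form be positive semidefinite, i.e.\ the discriminant inequality $(1-\tau)[\sigma(1+\theta)-1][\sigma-(\theta-1)^2]\ge(\sigma+\theta-1)^2(1-\theta)^2$ holds, and that the residual coefficient of $\|a_k\|_{\Y,G}^2$ be nonnegative.

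The conceptual heart is the selection of $\sigma$ and $\tau$. Evaluating the discriminant inequality at the boundary values $\sigma=1$, $\tau=0$ collapses it to $\theta[1-(\theta-1)^2]\ge\theta^2(\theta-1)^2$, which rearranges to $\theta(\theta^2-\theta-1)\le0$, i.e.\ precisely $\theta\le(1+\sqrt5)/2$. Since the stepsize satisfies $\theta<(1+\sqrt5)/2$ strictly, this inequality is strict at $(\sigma,\tau)=(1,0)$; the same holds for the $\|a_k\|_{\Y,G}^2$-coefficient condition, which at $\tau=0$ is just $\sigma/2>0$. Both conditions being continuous in $(\sigma,\tau)$, I would invoke openness to pick $\sigma$ slightly below $1$ and $\tau$ slightly above $0$ --- landing in $[1/2,1)\times(0,1/2]$ --- for which every inequality remains satisfied (the auxiliary requirements $\sigma\ge(\theta-1)^2$ and $\sigma\ge 1/(1+\theta)$ then hold automatically, since $(\theta-1)^2<1/2$ on the given range).

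The remaining and most delicate piece is the base case $k=1$, which is where I expect the main obstacle. Here there is no predecessor $p_0$ to feed the recursion, so Lemma~\ref{lem:deltak}(b) is unavailable and I would instead rely on Lemma~\ref{lem:deltak}(a), which bounds $\|q_1\|_\X\|p_1\|_\X+\|a_1\|_{\Y,G}^2$ by a multiple of $d_0$. Bounding the lone cross term by Cauchy--Schwarz, $\langle q_1,p_1\rangle_\X\ge-\|q_1\|_\X\|p_1\|_\X$, the constant $\eta_0$ is chosen precisely so that $(1-\tau)\eta_0$, read through the estimate of Lemma~\ref{lem:deltak}(a), compensates the resulting first-step deficit; tracking these constants carefully, and verifying that the negative $\|a_1\|_{\Y,G}^2$ contribution coming from $\eta_1$ is likewise controlled by the same bound, is the part that requires the most bookkeeping. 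Once the $k=1$ inequality and the $k\ge2$ inequalities are both in hand, the proposed $\{\eta_k\}$ satisfies the hypothesis of Lemma~\ref{lemdeltaxy}(b), completing the proof.
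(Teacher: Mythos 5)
Your proposal is correct and follows essentially the same route as the paper: for $k\ge 2$ it substitutes Lemma~\ref{lem:deltak}(b), exploits the designed cancellations with $(1-\tau)\eta_{k-1}$, and reduces the claim to positive semidefiniteness of a quadratic form in $(q_k,p_{k-1},a_k)$ that is strictly definite at $(\sigma,\tau)=(1,0)$ precisely because $\theta^2-\theta-1<0$, then perturbs by continuity; for $k=1$ it uses Cauchy--Schwarz together with Lemma~\ref{lem:deltak}(a) and the choice of $\eta_0$, exactly as in the paper's argument.
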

\begin{proof} It follows from definition of  $\eta_1$ and the Cauchy-Schwarz inequality that 
\begin{align*}
&[\sigma(1+\theta)-1 ]\frac{\|q_1\|_\X^2}{2\beta\theta}+\left[\sigma-(\theta-1)^2\right]\frac{\beta\|p_1\|_\X^2}{2\theta} +\sigma \frac{\|a_1\|_{\Y,G}^2}{2}  +\frac{(\sigma +\theta-1)}{\theta}\langle q_1,p_1 \rangle_\X \\
&\geq [\sigma(1+\theta)-1 ]\frac{\|q_1\|_\X^2}{2\beta\theta}+
\left[\sigma+\frac{(\sigma+\theta-1)(1-2\tau)}{\theta(1-\tau)}\right] \frac{\|a_1\|_{\Y,G}^2}{2}+ \eta_1 \\
& -\frac{(\sigma +\theta-1)}{\theta}\left( \|q_1\|_\X\|p_1\|_\X+\|a_1\|_{\Y,G}^2\right)\\
&\geq \eta_1  -\frac{(\sigma +\theta-1)}{\theta}\left( \|q_1\|_\X\|p_1\|_\X+\|a_1\|_{\Y,G}^2\right) \\
&  \geq \eta_1-(1-\tau)\eta_{0}
\end{align*}
where the second inequality holds for any   $\theta \in [1,2)$, $\sigma \ge 1/2$ and  $ \tau \in (0,1/2]$ and the 
third inequality is due to  Lemma~\ref{lem:deltak}(a) and definition of $\eta_0$.
Therefore, the inequality in Lemma~\ref{lemdeltaxy}(b) with $k=1$ holds for any $\theta$, $\sigma$ and $\tau$ as above.

Also, using Lemma~\ref{lem:deltak}(b) and the definition of $\{\eta_k\}$ in the statement of the lemma,
and performing some algebraic manipulations, we easily see 
that a sufficient condition for the inequality in Lemma~\ref{lemdeltaxy}(b)  with $ k\geq 2$ to hold is that
\begin{align*}
 &(\sigma(1+\theta)-1)\frac{\|q_k\|_\X^2}{2\beta}+(1-\tau)[\sigma-(\theta-1)^2]\frac{\beta\|p_{k-1}\|_\X^2}{2}  +
[\theta\sigma-\tau(\sigma+ \theta + \sigma \theta -1)]\frac{\|a_k\|_{\Y,G}^2}{2(1-\tau)}\\
 &+(\sigma + \theta-1)(1-\theta)\langle q_k ,p_{k-1}\rangle_\X\geq 0.
\end{align*}
Clearly, in view of the Cauchy-Schwarz inequality, the above inequality holds if the matrix
\begin{equation*} \label{matrixtheta>1}
M(\theta,\sigma,\tau)= \left[
\begin{array}{ccc} 
 \sigma(1+\theta)-1& (\sigma+\theta-1)(1-\theta)&0 \\[2mm]
(\sigma+\theta-1)(1-\theta)&  (1-\tau)[\sigma-(\theta-1)^2]   &0\\[2mm]
0&0&{\theta\sigma-\tau(\sigma+ \theta + \sigma \theta -1)}
\end{array} \right]
\end{equation*}
is positive semidefinite.
Since for any $\theta\in [1,(\sqrt{5}+1)/2)$, the matrix $M(\theta,1,0)$ is easily seen to be  positive definite, it follows that
there exist $\sigma \in [1/2,1)$ close to $1$ and $\tau \in (0,1/2]$ close to $0$ such that the above matrix is positive semidefinite.
We have thus established that the conclusion of the lemma holds.
\end{proof}

  Now we are ready to  prove Theorem~\ref{th:maintheoADMM}.
  \\[2mm]
  {\bf Proof of Theorem~\ref{th:maintheoADMM}:}  First note that \eqref{eq:th_incADMMtheta<1} follows from the last statement in Lemma~\ref{pr:aux} and \eqref{Scri2}. Now,
   it follows by combining Lemmas \ref{lemdeltaxy}, \ref{th:hpeheta<1} and \ref{th:hpeheta>1}  that DR-ADMM with $\theta\in(0,(1+\sqrt{5})/2)$ is an instance of Framework~3
  applied to problem \eqref{FAB} in which $\{(z_k,\tilde{z}_k,\lambda_k,\varepsilon_k)\}$ is as in \eqref{def:zk} and  $\{\eta_k\}$ is as defined in  Lemma~\ref{th:hpeheta<1} if $\theta\in (0,1)$ or as in Lemma~\ref{th:hpeheta>1}
  if $\theta\in[1,(1+\sqrt{5})/2)$. This conclusion together with Lemma \ref{Breges}(a) then imply that Theorem~\ref{th:main},  as well as the observation following it, holds with $\lambda=m=M=1$. Hence,
 estimate in  \eqref{eq:iterRADMMtheta<1} now follows from the latter observation and definition of $\eta_0$.
 
\appendix
\noindent
\\
{\bf \Large{ Appendix}}
\noindent
\section{Some basic technical results}\label{basiresul}

The following result gives some properties of the dual seminorm whose simple proof is  omitted.
 
\begin{proposition}\label{propdualnorm} 
Let $A:\Z\to \Z$ be a self-adjoint positive semidefinite linear operator and consider
 the seminorm $\|\cdot\|$ in $\Z$ given by $\|z\|= \langle A z, z\rangle ^{1/2}$ for every $z\in \Z$. Then, the following statements  hold:
\begin{itemize}
\item[(a)] $\dom  \|\cdot\|^*=\mbox{Im}\;(A)$ and $\| Az\|^*=\|z\|$ for every $z\in \Z$;
\item[(b)] if $A$ is invertible then $\|z\|^*= \langle A^{-1} z, z\rangle ^{1/2}$ for every $z\in \Z$.
\end{itemize}
\end{proposition}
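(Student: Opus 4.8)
The plan is to prove both statements by exploiting the orthogonal decomposition $\Z = \mathrm{Ker}(A) \oplus \mathrm{Im}(A)$, which is available because $A$ is self-adjoint, together with the elementary fact that for positive semidefinite $A$ one has $\|z\|=0$ if and only if $Az=0$ (indeed, $\langle Az,z\rangle=0$ forces $Az=0$ for such $A$). These two observations reduce everything to a Cauchy--Schwarz estimate for the degenerate bilinear form $(u,z)\mapsto\langle Au,z\rangle$ plus an explicit choice of a maximizing direction.

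First I would characterize $\dom\|\cdot\|^*$. Writing an arbitrary $v\in\Z$ as $v=v_0+v_1$ with $v_0\in\mathrm{Ker}(A)$ and $v_1\in\mathrm{Im}(A)$, I note that every $z\in\mathrm{Ker}(A)$ satisfies $\|z\|=0\le1$, so if $v_0\ne0$ the supremum defining $\|v\|^*$ diverges along the ray $z=tv_0$ as $t\to\infty$, since $\langle v,tv_0\rangle=t\|v_0\|^2$. Hence $\dom\|\cdot\|^*\subset\mathrm{Im}(A)$, and the reverse inclusion will come for free from the finite bound established next.

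For the identity $\|Az\|^*=\|z\|$, the upper bound is Cauchy--Schwarz for the positive semidefinite form $\langle A\cdot,\cdot\rangle$: for any $z'$ with $\|z'\|\le1$ one has $\langle Az,z'\rangle\le\langle Az,z\rangle^{1/2}\langle Az',z'\rangle^{1/2}=\|z\|\,\|z'\|\le\|z\|$, so $\|Az\|^*\le\|z\|<\infty$; applied to $v=Az\in\mathrm{Im}(A)$ this simultaneously shows every element of $\mathrm{Im}(A)$ lies in $\dom\|\cdot\|^*$, completing the domain claim in (a). For the matching lower bound, when $\|z\|=0$ both sides vanish (as $Az=0$), while when $\|z\|>0$ the explicit feasible point $z'=z/\|z\|$ gives $\langle Az,z'\rangle=\langle Az,z\rangle/\|z\|=\|z\|$, so $\|Az\|^*\ge\|z\|$.

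Finally, part (b) follows by substitution: when $A$ is invertible, every $v$ equals $A(A^{-1}v)$, so (a) yields $\|v\|^*=\|A^{-1}v\|=\langle A(A^{-1}v),A^{-1}v\rangle^{1/2}=\langle A^{-1}v,v\rangle^{1/2}$, using self-adjointness of $A^{-1}$. The only point requiring genuine care is the degenerate positive semidefinite case in part (a): one must retain the seminorm (rather than norm) structure throughout, using the equivalence $\|z\|=0\Leftrightarrow Az=0$ both to detect when the dual supremum is infinite and to dispose of the boundary case $\|z\|=0$ in the lower bound. Everything else is routine.
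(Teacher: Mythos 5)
Your proof is correct. The paper explicitly omits the proof of this proposition as ``simple,'' so there is no authorial argument to compare against; your route --- the orthogonal decomposition $\Z=\mathrm{Ker}(A)\oplus\mathrm{Im}(A)$, the equivalence $\|z\|=0\Leftrightarrow Az=0$, Cauchy--Schwarz for the positive semidefinite form to get $\|Az\|^*\le\|z\|$, and the normalized direction $z/\|z\|$ for the reverse inequality --- is the standard one the authors surely intended, and part (b) does follow by the substitution $v=A(A^{-1}v)$. One cosmetic caution: in your divergence argument the quantity $\|v_0\|^2$ must be read as the ambient inner-product norm $\langle v_0,v_0\rangle$ (which is positive for $v_0\neq 0$), not the $A$-seminorm of $v_0$ (which vanishes on $\mathrm{Ker}(A)$); the intent is clear, but the notation collides with the seminorm symbol used in the statement.
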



The following result discusses the existence of solution for a certain monotone inclusion problem.

\begin{proposition}\label{existence} 
Let   $T:\Z\tos \Z$ be a maximal monotone operator and   $w : \Z  \to [-\infty,\infty]$  be a
distance generating function such that $ \inte (\dom w) \supset \ri (\dom T)$ and
$w$ is  strongly convex on $ \ri (\Dom (T)$.
Then, for every $\hat z \in  \inte(\dom w)$, the inclusion
$ 0\in (T+ \partial (d{w})_{\hat z})(z)$ has a unique solution $z$, which must necessarily be on
$\Dom (T) \cap \inte(\dom w)$ and hence satisfy the inclusion
$0  \in (T+ \nabla (d{w})_{\hat z})(z)$.
\end{proposition}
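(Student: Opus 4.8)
The plan is to collapse the inclusion into a single monotone inclusion and then invoke the surjectivity theory for strongly monotone maximal monotone operators. Since, up to an additive constant, $(dw)_{\hat z}(\cdot) = w(\cdot) - \inner{\nabla w(\hat z)}{\cdot}$, its subdifferential is $\partial (dw)_{\hat z}(z) = \partial w(z) - \nabla w(\hat z)$, so the inclusion $0 \in (T + \partial(dw)_{\hat z})(z)$ is equivalent to $\nabla w(\hat z) \in M(z)$, where $M := T + \partial w$. By Definition~\ref{defw0} we have $\Dom(\partial w) = \inte(\dom w)$, hence $\Dom M = \Dom(T) \cap \inte(\dom w)$. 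The first step is to show $M$ is maximal monotone: both $T$ and $\partial w$ are maximal monotone (the latter because $w$ is proper lsc convex), and since $\Dom(\partial w) = \inte(\dom w)$ is open and contains $\ri(\Dom(T))$, the relative interiors of the two domains meet (in $\ri(\Dom(T)) \neq \emptyset$), so Rockafellar's sum theorem gives maximality of $M$.

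Next I would establish that $M$ is strongly monotone on $\Dom M$. Because $\Dom M \subseteq \inte(\dom w)$, where $w$ is differentiable, $\partial w$ reduces to $\nabla w$ there; so for $z,z' \in \Dom M$ and $t \in T(z)$, $t' \in T(z')$, monotonicity of $T$ gives $\inner{t-t'}{z-z'} \ge 0$, and it suffices to produce $\mu>0$ with $\inner{\nabla w(z)-\nabla w(z')}{z-z'} \ge \mu\|z-z'\|^2$. This is exactly strong monotonicity of $\nabla w$ between $z$ and $z'$, which the hypothesis supplies directly when $z,z' \in \ri(\Dom(T))$. Using that $\cl(\Dom(T))$ is convex for maximal monotone $T$, hence $\Dom(T) \subseteq \cl(\ri(\Dom(T)))$, every point of $\Dom M$ is a limit of points in $\ri(\Dom(T))$, and continuity of $\nabla w$ on $\inte(\dom w)$ lets the inequality pass to the limit over all of $\Dom M$. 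With $M$ maximal and strongly monotone, it is coercive (anchor the estimate at a base point $z_0 \in \ri(\Dom(T)) \subseteq \Dom M$) and therefore surjective with single-valued inverse — a standard consequence of Minty's theorem. In particular $\nabla w(\hat z) \in \rg M$ and its preimage is the unique $z$ solving $\nabla w(\hat z) \in M(z)$. Finally, since $z \in \Dom M = \Dom(T) \cap \inte(\dom w)$ and $w$ is differentiable at $z$, we have $\partial(dw)_{\hat z}(z) = \{\nabla (dw)_{\hat z}(z)\}$, whence $0 \in (T + \nabla(dw)_{\hat z})(z)$, as claimed.

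The main obstacle is the mismatch between what is assumed and what is needed: strong convexity is posited only on $\ri(\Dom(T))$, whereas strong monotonicity of $M$ is required on the possibly larger set $\Dom M = \Dom(T) \cap \inte(\dom w)$, which may include points on the relative boundary of $\Dom(T)$. The continuity-extension argument above is precisely what bridges this gap, and it relies on the convexity of $\cl(\Dom(T))$ to justify approximating domain points by points of $\ri(\Dom(T))$; verifying the constraint qualification for the sum theorem and this extension are the two places where genuine care is needed, whereas the passage from strong monotonicity to existence and uniqueness is routine.
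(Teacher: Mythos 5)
Your proposal is correct and follows essentially the same route as the paper's proof: the same constraint qualification $\ri(\Dom(T))\cap\inte(\dom w)=\ri(\Dom(T))\neq\emptyset$ feeding Rockafellar's sum theorem, the same limit/continuity argument extending strong monotonicity from $\ri(\Dom(T))$ to $\Dom(T)\cap\inte(\dom w)$, and the same appeal to surjectivity of strongly monotone maximal monotone operators. Rewriting the inclusion as $\nabla w(\hat z)\in (T+\partial w)(z)$ rather than $0\in (T+\partial(dw)_{\hat z})(z)$ is only a cosmetic translation.
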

\begin{proof}
Let $\hat z   \in  \inte(\dom w)$  be given. First note that 
\[
\inte( \dom w) \subset \Dom (\partial w) = \Dom ( \partial (dw)_{\hat z} ) \subset \dom w,
\]
from which we conclude that $\ri (\Dom ( \partial (dw)_{\hat z} ) = \inte( \dom w)$. Moreover, by
Proposition~2.40 and Theorem~12.41 of \cite{VariaAna}, we have that
$\mbox{ri}(\Dom(T))\neq \emptyset$. These two observations then imply that
\begin{equation} \label{eq:veri-inte}
\ri ( \Dom (T)) \cap \ri( \Dom ( \partial (dw)_{\hat z} ) = \ri(\Dom (T)) \cap \inte (\dom w) = \ri ( \Dom (T) ) \ne \emptyset.
\end{equation}
Clearly, $(dw)_{\hat z}(\cdot)$ is a proper lower semicontinuous function due to Definition \ref{defw0} and \eqref{def_d},
and hence $\partial (dw)_{\hat z}$ is maximal monotone in view of Theorem~12.17 of \cite{VariaAna}.
Thus, it follows from \eqref{eq:veri-inte}, the last conclusion, the assumption that $T$ is maximal monotone and
Corollary~12.44 of \cite{VariaAna} that $T + \partial (d{w})_{\hat z}$  is maximal monotone.
Moreover, since $w$ is strongly convex on $\ri(\Dom(T))$,  it follows that $\partial (d{w})_{\hat z}$
is strongly monotone on $\ri(\Dom(T))$.  By using a simple limit argument
and the fact that $\partial w$ is a continuous map on $\inte(\dom w)$ due to Definition \ref{defw0}, we conclude that
$\partial (d{w})_{\hat z}$
is strongly monotone on the larger set $\Dom(T) \cap \inte(\dom w)$.
Since the latter set is equal to $\Dom ( T+ \partial (d{w})_{\hat z})$ and $T$ is monotone,
we conclude that $ T+ \partial (d{w})_{\hat z}$ is strongly monotone.
The first conclusion of the proposition now  follows  from Proposition~12.54 of \cite{VariaAna}.
The second conclusion follows immediately from the first one and the fact that, by  Definition \ref{defw0},
$\Dom (\partial w) = \inte( \dom w)$.
\end{proof}

Next proposition discuss the existence of solutions of a problem related to \eqref{FAB}.

\begin{proposition}\label{lem:auxSol}
Let a linear operator $E:\Z\to\tilde \Z$, a vector $e \in {\rm Im}\, E$  and a  proper closed convex function $h:\mathcal{Z}\to \R\cup\{+\infty\}$ be such that 
\begin{equation} \label{prob:aux_apendix}
 \inf_{z\in \Z} \left \{ h(z): Ez=e \right\}<\infty.
\end{equation}
 Then, the following statements hold:
\begin{itemize}
\item [(a)] if $\ri (\dom h^*) \cap {\rm Im } \,(E^*)\neq \emptyset$,  then \eqref{prob:aux_apendix} has an optimal solution $z^*$;
\item [(b)] the optimal solution set of \eqref{prob:aux_apendix} is nonempty and bounded if and only if $0\in \inte  (\dom h^* + {\rm Im}\,(E^*))$;
\item[(c)] if the assumption of (a) holds and \eqref{prob:aux_apendix} has a Slater point, i.e., a point $\bar z \in \ri(\dom h)$ such that
$E\bar z=e$, then there exists a Lagrange pair $(z,x)=(z^*,x^*)$ satisfying
\[
0 \in \partial h(z)-E^* x, \quad Ez=e.
\]
\end{itemize} 
As a consequence of (b), if the set of optimal solutions of \eqref{prob:aux_apendix} is nonempty and bounded, then $\ri (\dom h^*) \cap {\rm Im}\, (E^*)\neq \emptyset$.
\end{proposition}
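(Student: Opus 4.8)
The plan is to treat \eqref{prob:aux_apendix} as a linearly constrained convex program and exploit Fenchel--Rockafellar duality, whose dual reads $\sup_x\{\inner{x}{e}-h^*(E^*x)\}$; throughout I would use the finite-dimensionality of $\Z,\tilde\Z$ freely. For part (a), I would invoke the primal-attainment half of the Fenchel duality theorem (e.g. \cite[Thm.~31.1]{Rockafellar70}) applied to this dual: the hypothesis $\ri(\dom h^*)\cap{\rm Im}(E^*)\neq\emptyset$ says exactly that some $\bar x$ satisfies $E^*\bar x\in\ri(\dom h^*)$, which is the relative-interior constraint qualification on the dual data; together with finiteness of the primal value it forces the (finite) primal infimum to be attained, giving (a).

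For (b), the key identity I would use is that for a closed proper convex $h$ the recession function equals the support function of $\dom h^*$, i.e. $h^{\infty}(d)=\sigma_{\dom h^*}(d)=\sup_{v\in\dom h^*}\inner{v}{d}$ (\cite[Thm.~13.3]{Rockafellar70}). Since the feasible affine set $A:=\{z:Ez=e\}$ (nonempty as $e\in{\rm Im}\,E$) has direction space $\ker E$, a standard recession criterion (\cite[Thm.~27.1]{Rockafellar70}) says the optimal set is nonempty and bounded iff $\{d\in\ker E:h^\infty(d)\le0\}=\{0\}$, that is, iff $\sigma_{\dom h^*}(d)>0$ for every nonzero $d\in\ker E$. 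On the other side, using ${\rm Im}(E^*)=(\ker E)^\perp$ I would compute
\[
\sigma_{\dom h^*+{\rm Im}(E^*)}(d)=\sigma_{\dom h^*}(d)+\sigma_{(\ker E)^\perp}(d)=
\begin{cases}\sigma_{\dom h^*}(d),& d\in\ker E,\\ +\infty,& d\notin\ker E,\end{cases}
\]
and then apply the elementary equivalence, valid for a convex set $S$ in finite dimensions, that $0\in\inte S$ iff $\sigma_S(d)>0$ for all $d\neq0$ (via a compactness-on-the-sphere separation argument together with $\inte S=\inte\overline S$). Matching the two conditions yields the equivalence asserted in (b).

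For (c), I would start from the optimal $z^*$ furnished by (a) and write optimality as $0\in\partial(h+\delta_A)(z^*)$. The Slater hypothesis $\bar z\in\ri(\dom h)\cap A$ (recall $\ri A=A$) is precisely the relative-interior condition for the Moreau--Rockafellar sum rule (\cite[Thm.~23.8]{Rockafellar70}), so $0\in\partial h(z^*)+N_A(z^*)$; as $N_A(z^*)=(\ker E)^\perp={\rm Im}(E^*)$, there is $x^*$ with $E^*x^*\in\partial h(z^*)$ and $Ez^*=e$, the desired Lagrange pair. For the concluding consequence I would feed ``nonempty and bounded'' into (b) to obtain $0\in\inte(\dom h^*+L)$ with $L:={\rm Im}(E^*)=(\ker E)^\perp$, and let $P$ be the orthogonal projection onto $L^\perp=\ker E$, so $\ker P=L$ and $P(\dom h^*+L)=P(\dom h^*)$. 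Since $P$ is an open map, $0=P(0)\in\inte_{L^\perp}(P(\dom h^*))=\ri(P(\dom h^*))=P(\ri(\dom h^*))$, the last equality by \cite[Thm.~6.6]{Rockafellar70}; hence some $v\in\ri(\dom h^*)$ has $Pv=0$, i.e. $v\in\ker P=L$, giving $\ri(\dom h^*)\cap{\rm Im}(E^*)\neq\emptyset$.

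The main obstacle I anticipate is part (b): both the careful derivation of the coercivity criterion in the sharp form $\sigma_{\dom h^*}(d)>0$ on $\ker E\setminus\{0\}$, and the bookkeeping in the support-function identity together with the interior-versus-closure subtleties (ensuring $\inte(\dom h^*+{\rm Im}(E^*))$ coincides with the interior of its closure). The remaining parts are comparatively routine applications of standard duality and subdifferential calculus once the dual problem, the recession identity, and the relative-interior/projection facts are in place.
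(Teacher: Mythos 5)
Your proof is correct, but it takes a genuinely different route from the paper's, which is essentially a citation: the paper disposes of (a) and (b) in one stroke by invoking Lemma~2.2.2 in Chap.~X of \cite{Hiriart2} (applied with $A_0=E^*$, $g=h^*$, $s=e$, using $(h^*)^*=h$) together with the discussion following Theorem~2.2.3 there, handles (c) by citing \cite[Corollary~28.2.2]{Rockafellar70}, and leaves the final assertion as an immediate consequence of (b). You instead rebuild every statement from classical facts in \cite{Rockafellar70}: (a) from the dual-attainment half of Fenchel's duality theorem, (b) from the recession identity $h^{\infty}=\sigma_{\dom h^*}$ combined with the recession criterion for nonempty bounded minimum sets and the support-function test for $0\in\inte(\cdot)$, (c) from the Moreau--Rockafellar sum rule together with $N_A(z^*)=(\ker E)^{\perp}={\rm Im}\,(E^*)$, and the last claim from relative-interior calculus. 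The paper's approach buys brevity; yours buys a self-contained, checkable argument --- in effect you prove the lemma of \cite{Hiriart2} that the paper relies on. (A shortcut for your part (b): apply the equivalence ``minimum set nonempty and bounded $\iff 0\in\inte(\dom F^*)$'' directly to $F=h+\delta_A$, noting that $\dom F^*$ and $\dom h^*+{\rm Im}\,(E^*)$ have the same relative interior, hence the same interior, since $F^*$ is the closure of the infimal convolution of $h^*$ with $\delta_A^*$ and $\dom\delta_A^*={\rm Im}\,(E^*)$.)

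Two small repairs. First, in (a) you invoke ``finiteness of the primal value,'' but hypothesis \eqref{prob:aux_apendix} only gives $\inf<\infty$; you must also rule out the value $-\infty$, and this follows from your own setup: the point $\bar x$ with $E^*\bar x\in\ri(\dom h^*)\subset\dom h^*$ is dual feasible, so weak duality gives
\begin{equation*}
\inf\{h(z):Ez=e\}\;\ge\;\inner{e}{\bar x}-h^*(E^*\bar x)\;>\;-\infty,
\end{equation*}
after which attainment follows from the constraint qualification as you argue. Second, your projection argument for the concluding consequence can be replaced by one line: $0\in\inte\left(\dom h^*+{\rm Im}\,(E^*)\right)\subset\ri\left(\dom h^*+{\rm Im}\,(E^*)\right)=\ri(\dom h^*)+{\rm Im}\,(E^*)$ by \cite[Corollary~6.6.2]{Rockafellar70}, so some $v\in\ri(\dom h^*)$ satisfies $-v\in{\rm Im}\,(E^*)$, i.e.\ $v\in\ri(\dom h^*)\cap{\rm Im}\,(E^*)$. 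Neither point affects the validity of your overall scheme.
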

\begin{proof}
Since $h$ is a proper closed convex function, we have $(h^*)^*=h$. Hence,  
the proof of  (a) and (b)  follows from Lemma~2.2.2 in Chap.~X of  \cite{Hiriart2} 
with $A_0=E^*$, $g=h^*$ and $s=e$, and the discussion following Theorem~2.2.3. 
The proof of (c) follows easily from  \cite[Corollary~28.2.2]{Rockafellar70}.
\end{proof}


\begin{corollary} \label{cr:auxSol}
Let $E$, $e$ and $h$ be as in Proposition \ref{lem:auxSol} and assume that $\ri (\dom h^*) \cap {\rm Im } \,(E^*)\neq \emptyset$.
Then, the problem
\begin{equation} \label{prob:aux_apendix1}
 \inf_{z\in \Z} \left \{ h(z) + \frac12 \|Ez-e\|^2  \right\}
\end{equation}
has an optimal solution.
\end{corollary}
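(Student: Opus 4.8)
The plan is to reduce the corollary to Proposition~\ref{lem:auxSol}(a) by rewriting the penalized unconstrained problem \eqref{prob:aux_apendix1} as an equality-constrained one over an enlarged space. I would introduce a slack variable $t\in\tilde\Z$ playing the role of $Ez-e$, work on $\hat\Z:=\Z\times\tilde\Z$, and define a function $\hat h:\hat\Z\to\R\cup\{+\infty\}$ and a linear operator $\hat E:\hat\Z\to\tilde\Z$ by
\[
\hat h(z,t):=h(z)+\tfrac12\|t\|^2,\qquad \hat E(z,t):=Ez-t.
\]
On the feasible set of the constrained problem $\inf\{\hat h(z,t):\hat E(z,t)=e\}$ the equality $t=Ez-e$ forces $\hat h(z,t)=h(z)+\tfrac12\|Ez-e\|^2$, so this reformulation has the same optimal value as \eqref{prob:aux_apendix1} and any minimizer $(z^*,t^*)$ yields a minimizer $z^*$ of \eqref{prob:aux_apendix1}. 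It therefore suffices to check that the reformulated problem meets the hypotheses of Proposition~\ref{lem:auxSol}(a).

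Next I would verify the standing hypotheses. The function $\hat h$ is proper, convex and closed, being the sum of the proper closed convex $h$ and the finite continuous quadratic $\tfrac12\|\cdot\|^2$. The operator $\hat E$ is surjective (already $\hat E(0,t)=-t$ ranges over all of $\tilde\Z$), so $e\in{\rm Im}\,\hat E$ trivially. Finiteness of the infimum follows from properness of $h$: choosing any $z_0\in\dom h$, the point $(z_0,Ez_0-e)$ is feasible with finite objective value $h(z_0)+\tfrac12\|Ez_0-e\|^2<\infty$.

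The crux is the constraint qualification $\ri(\dom\hat h^*)\cap{\rm Im}(\hat E^*)\neq\emptyset$. A direct computation gives the conjugate $\hat h^*(v,s)=h^*(v)+\tfrac12\|s\|^2$, whence $\dom\hat h^*=\dom h^*\times\tilde\Z$ and, using that the relative interior of a product is the product of the relative interiors, $\ri(\dom\hat h^*)=\ri(\dom h^*)\times\tilde\Z$. Likewise, from $\inner{\hat E(z,t)}{x}=\inner{z}{E^*x}+\inner{t}{-x}$ one reads off $\hat E^*x=(E^*x,-x)$, so ${\rm Im}(\hat E^*)=\{(E^*x,-x):x\in\tilde\Z\}$. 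The corollary's hypothesis supplies an $x$ with $E^*x\in\ri(\dom h^*)$; for this $x$ the vector $(E^*x,-x)$ lies in $\ri(\dom h^*)\times\tilde\Z=\ri(\dom\hat h^*)$ and in ${\rm Im}(\hat E^*)$, so the intersection is nonempty. Proposition~\ref{lem:auxSol}(a) then produces an optimal solution of the reformulated problem, and hence of \eqref{prob:aux_apendix1}.

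I expect the only genuinely delicate point to be this last verification: one must confirm that the full-domain quadratic contributes no restriction on the slack coordinate (so that $\dom\hat h^*$ splits as a product with a full second factor) and that passing to the relative interior of this product preserves the qualification condition inherited from $\ri(\dom h^*)\cap{\rm Im}(E^*)\neq\emptyset$. Everything else is routine bookkeeping of conjugates and adjoints.
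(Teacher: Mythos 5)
Your proposal is correct and follows essentially the same route as the paper: the paper's proof likewise introduces a slack variable $w$, rewrites \eqref{prob:aux_apendix1} as $\inf\{h(z)+\tfrac12\|w\|^2 : Ez-w=e\}$, and invokes Proposition~\ref{lem:auxSol}(a). The only difference is that you explicitly verify the constraint qualification $\ri(\dom\hat h^*)\cap{\rm Im}(\hat E^*)\neq\emptyset$ via the computation $\hat h^*(v,s)=h^*(v)+\tfrac12\|s\|^2$ and $\hat E^*x=(E^*x,-x)$, a check the paper leaves implicit; your verification is accurate.
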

\begin{proof}
The proof follows by noting that the above problem is equivalent to
\[
 \inf_{z\in \Z} \left \{ h(z) + \frac12 \|w\|^2 : Ez-w = e  \right\}
\]
and by applying Proposition \ref{lem:auxSol}(a) to the latter problem.
\end{proof}

\section{Proof of Proposition \ref{prop:ergodic}}\label{proofpropergodic}
First, let us show that for every $k> \ell \geq 0$, the following inequalities  hold:
 \begin{equation}\label{ineq:epspropergodic} 
 \Lambda_{\ell,k} \, \varepsilon^a_{\ell,k} \leq (dw)_{z_{\ell}}(\tilde z^a_{l,k})+\eta_{\ell}\leq \displaystyle\max_{i=\ell+1,\ldots, k}(dw)_{z_{\ell}}(\tilde z_{i})+\eta_{\ell}.
 \end{equation}
Indeed, it follows from Lemma~\ref{lema_desigualdades}(b) that  for every $z \in \dom w$,
\[(dw)_{z_{\ell}}(z) - (dw)_{z_k}(z)+ (1-\tau) \eta_{\ell} \geq (1 - \sigma)\sum_{i=\ell+1}^k (dw)_{z_{i-1}}(\tilde{z}_i) + \sum_{i=\ell+1}^k\lambda_i (\langle r_i, \tilde{z}_i - z\rangle + \varepsilon_i).\]
Taking $z=\tilde z^a_{l,k}$ in the last inequality and using that $\tau>0$ and $\sigma <1$, we have
\[(dw)_{z_{\ell}}(\tilde z^a_{l,k}) +\eta_{\ell}
\geq \sum_{i=\ell+1}^k\lambda_i (\langle r_i, \tilde{z}_i - \tilde z^a_{l,k}\rangle + \varepsilon_i),\]
which combined with definition of $\varepsilon^a_{\ell,k}$ in \eqref{SeqErg}, proves the first inequality of the lemma.  The second inequality of the lemma follows from definition of $\tilde z^a_{l,k}$ and  convexity of the function $(dw)_{z_\ell}(\cdot)$.

The proof of  \eqref{qer10}  follows  from  the {\it weak transportation formula}
\cite[Theorem~2.3]{Bu-Iu-Sv:teps} and the fact that   $r_k \in (S+T^{[\varepsilon_k]})(\tz_k) \subset (S+T)^{[\varepsilon_k]}(\tz_k)$. Now,  \eqref{grad-d}, \eqref{subpro} and \eqref{SeqErg} imply that
\begin{equation}\label{eq:estimate_rakl}
 \Lambda_{\ell,k}r^a_{\ell,k}=\sum_{i=\ell+1}^k \lambda_i r_i= \sum_{i=\ell+1}^k \nabla(dw)_{z_{i}}({z}_{i-1}) =  \nabla (dw)_{z_{k}}({z}_{\ell}).
\end{equation}
Also, \eqref{grad-d}, \eqref{eq:789} and \eqref{eq:094} yield
\begin{align*}
\|\nabla (dw)_{z_{k}}({z}_\ell)\|^* &\leq  \| \nabla (dw)_{z_{k}}(z^{*})\|^*+\| \nabla (dw)_{z_\ell}(z^*)\|^*
  \leq   \frac{\sqrt{2}M}{\sqrt{m}}
\left[  ((dw)_{z_{k}}(z^{*}))^{1/2}+((dw)_{z_{\ell}}(z^*))^{1/2}\right] \\
&\leq \frac{\sqrt{2}M}{\sqrt{m}}\left[(1-\underline{\tau})^{(k-\ell)/2}+1\right]\left[(dw)_{z_{\ell}}(z^*)+\eta_{\ell}\right]^{1/2}\leq \frac{2\sqrt{2}M}{ \sqrt{m}}\left[(dw)_{z_{\ell}}(z^*)+\eta_{\ell}\right]^{1/2}.
\end{align*}
Combining the last inequality with \eqref{eq:estimate_rakl}  and using relation \eqref{eq:094} with $k=\ell$ and $\ell=0$, we obtain
\[
 \|r^a_{\ell,k}\|^* \le \frac{2\sqrt{2}M}{ \Lambda_{\ell,k}\sqrt{m}}(1 - \underline{\tau})^{\ell/2}\left[((dw)_{z_0}(z^*))+\eta_0\right]^{1/2}.
\]
Hence, since $z^*$ is an arbitrary point in $(S+T)^{-1}(0)$, the first bound in \eqref{ad901} is proved. Let us now prove  the second bound in \eqref{ad901}. It follows from   property \eqref{lipsc}, triangle inequality and  $2ab\leq a^2+b^2$ for all $ a,b\geq 0$ that, for every $i>\ell\geq 0$, 
\[
(dw)_{z_{\ell}}(\tilde z_{i})\leq \frac{M}{2}\|\tilde z_{i}- z_{\ell}\|^2\leq  \frac{M}{2}(\|\tilde z_{i}- z_{i}\|+\|z_{i}- z^*\|+\| z^*- z_{\ell}\|)^2\leq   \frac{3M}{2} (\|\tilde z_{i}- z_{i}\|^2+\|z_{i}- z^*\|^2+\| z^*- z_{\ell}\|^2).
\]
 Hence,  property  \eqref{strongly}  and the fact that $m \leq M$ (see \eqref{lipsc}) yield
\[
(dw)_{z_{\ell}}(\tilde z_{i})+\eta_{\ell}\leq  \frac{3M}{m}\left((dw)_{z_{i}}(\tilde z_{i})+(dw)_{z_{i}}(z^*)+(dw)_{z_{\ell}}( z^*)+\eta_{\ell}\right), \quad \forall \;i>\ell\geq 0.
\]
Since \eqref{eq:094} implies that the sequence $\{(dw)_{z_{i}}(z^*)+\eta_{i}\}$ is non-increasing, it follows from the last inequality and  Lemma~\ref{lema_desigualdades}(d) that
\begin{align*}
(dw)_{z_{\ell}}(\tilde z_{i})+\eta_{\ell}\leq\; &  \frac{3M}{m}\left(\frac{1}{ 1 - \sigma} +2\right)[(dw)_{z_{\ell}}(z^*)+\eta_\ell]
\leq   \frac{9M}{m (1 - \sigma)}[(dw)_{z_{\ell}}(z^*)+\eta_\ell], \quad \forall \;i>\ell\geq 0,
\end{align*}
which, combined with \eqref{ineq:epspropergodic} and \eqref{eq:094}, yields
\[
\varepsilon^a_{\ell,k} \leq \frac{9M}{ m(1 - \sigma)\Lambda_{\ell,k} }
(1 - \underline{\tau})^{\ell}\left[(dw)_{z_{0}}(z^*)+\eta_0\right]
\]
Since $z^*$ is an arbitrary point in $(S+T)^{-1}(0)$, the proof is concluded.  \hfill{ $\square$}

\section{Proof of Lemma~\ref{Breges}(b)}\label{nonemsolu}
Let a scalar $\mu>0$ and note  that  
 $(s,y,x)\in Z_{\mu}^*$ if and only if  $(s,y,x)$  satisfies 
\begin{equation}\label{aux.121}
0 \in \partial g(s)-D^*x+{\mu}H(s-s_0),\;
0 \in  \partial f(y)-C^*x+\mu\beta  C^*C(y-y_0), \;
Cy+Ds-c+\frac{\mu}{\beta\theta}(x-x_0)=0.
\end{equation}
Hence, the proof of Lemma~\ref{Breges}(b) will follow if we show that  \eqref{aux.121} has a solution. Towards this goal, let us
consider the problem
\begin{equation}\label{prob:perturbedapendix} 
\inf_{(s,y,u)} \left\{g(s)+f(y)+\frac{\mu}{2}\|s-s_0\|^2_{\mathcal{S},H}
+\frac{\mu\beta}{2}\|C(y-y_0)\|^2_\X+\frac{\beta\theta}{2\mu}\|u+\frac{\mu}{\beta\theta}x_0\|_\X^2: Ds+Cy+u=c\right\}.  
\end{equation}
It is easy to see that $(s,y,c-Ds-Cy)$ is a Slater point of the above problem for any   
$(s,y)\in \ri (\dom g)\times \ri (\dom f)\neq \emptyset$.
Hence, since condition~{\bf B2} easily implies that the assumption of Proposition~\ref{lem:auxSol}(a) holds,
 it follows from  Proposition~\ref{lem:auxSol}(c) that problem \eqref{prob:perturbedapendix} 
 has a solution $(\bar{s},\bar{y},\bar{u})\in\mathcal{S}\times\Y\times\X$  and  an associated Lagrange multiplier $\bar{x}\in \X$.
The latter conclusion and the optimality conditions for \eqref{prob:perturbedapendix}
 immediately imply that $(\bar{s},\bar{y},\bar{x})$ satisfies \eqref{aux.121}.  \hfill{ $\square$}


\section{Proof of Lemma~\ref{lem:deltak}}\label{nodo}
(a) Let  a point  $z_{\mu}^*:=(s_{\mu}^*,y_{\mu}^*,x_{\mu})\in Z_{\mu}^*$ (see Lemma~\ref{Breges}(b)) and consider $(z_0, z_1,\tilde{z}_1,\lambda_1,\varepsilon_1)$ as in \eqref{def:zk}. It follows from the definitions of $p_1$ and $q_1$,    $2ab\leq a^2+b^2$ for all $ a,b\geq 0$ and $\theta \geq 1$ that
\begin{equation}\label{eq_00000001}
\begin{array}{rcl}
 \|p_1\|_\X\|q_1\|_\X &= &\frac{1}{\theta}\|x_1-x_0\|_\X\|C(y_1-y_0)\|_\X
 \leq \frac{1}{2\theta}\left(\|x_1-x_0\|_\X^2+\|C(y_1-y_0)\|_\X^2\right)\\[2mm]
 &\leq &\frac{1}{\theta}{\|x_1-x_{\mu}^*\|_\X^2}+\|C(y_1-y_{\mu}^*)\|_\X^2+\frac{1}{\theta}{\|x_0-x_{\mu}^*\|_\X^2}+\|C(y_0-y_{\mu}^*)\|_\X^2\\
\end{array}
\end{equation}
Using the definition of $a_1$ and simple calculus, we obtain
\[
\|a_1\|_{\Y,G}^2= \|y_1-y_0\|_{\Y,G}^2\leq 2(\|y_1-y_{\mu}^*\|_{\Y,G}^2+\|y_0-y_{\mu}^*\|_{\Y,G}^2)
\]
which, combined with \eqref{eq_00000001} and definitions of $z_0,z_1$ and $dw$ (see \eqref{df:dw_admm}), 
yields
\begin{equation}\label{eq_000000012}
 \|p_1\|_\X\|q_1\|_\X+\|a_1\|_{\Y,G}^2\leq  4\max\{\beta,\beta^{-1}\}\left((dw)_{z_1}(z_{\mu}^*)+(dw)_{z_0}(z_{\mu}^*)\right).
\end{equation}
On the other hand, 
 Lemma~\ref{lemdeltaxy}(a) implies that  inclusion \eqref{eq:ec.2} is satisfied for $(z_0, z_1,\tilde{z}_1,\lambda_1,\varepsilon_1)$ and hence it follows from Lemma~\ref{lema_desigualdades}(a) with $z=z^*_\mu$, $\lambda_1=1$ and  the fact that $\langle r_1, \tilde{z}_1 - z_\mu^*\rangle\geq 0$ (see \eqref{a390} with $k=1$, $z^*=z_{\mu}^*$ and $\varepsilon_1=0$)
 that
\begin{equation}\label{ineq_lema007d1d0}
(dw)_{z_1}(z_{\mu}^*)\leq (dw)_{z_0}(z_{\mu}^*) +(dw)_{z_1}(\tilde{z}_1)-(dw)_{z_0}(\tilde{z}_1).
\end{equation}
Using the definitions in  \eqref{df:dw_admm}, \eqref{def:zk}, \eqref{defr}  and  \eqref{eq:deltas}, we obtain
\begin{align*}
(dw)_{z_1}(\tilde{z}_1)-(dw)_{z_0}(\tilde{z}_1)&=
\frac{1}{2\beta\theta}\|q_1+(1-\theta)\beta p_1\|_\X^2-  \frac{1}{2}{\|s_1\|_{\mathcal{S},H}^2}-\frac{1}{2}{\|a_1\|_{\Y,G}^2}\\
&-\frac{1}{2\beta}\|q_1\|_\X^2-
\frac{1}{2\beta\theta}\|q_1+\beta p_1\|_\X^2\\
&= \frac{(\theta-1)\beta}{2}\|p_1\|_\X^2-\frac{1}{2}\left\|\sqrt{\beta} p_1 +\frac{q_1}{\sqrt{\beta}}\right\|_\X^2-  \frac{1}{2}{\|s_1\|_{\mathcal{S},H}^2}-\frac{1}{2}{\|a_1\|_{\Y,G}^2} \\
&\leq \frac{(\theta-1)\beta}{2}\|p_1\|_\X^2\leq \frac{(\theta-1)}{\theta}\left( \frac{\|x_1-x_{\mu}^*\|_\X^2}{\beta\theta}+\frac{\|x_0-x_{\mu}^*\|_\X^2}{\beta\theta}\right)\\
&\leq \frac{2(\theta-1)}{\theta} \left[(dw)_{z_1}(z_{\mu}^*)+(dw)_{z_0}(z_{\mu}^*)\right]
\end{align*}
where   the second inequality is due to the definition of $p_1$  and the fact that $2ab\leq a^2+b^2$ for all $ a,b\geq 0$, and the last inequality is due to \eqref{df:dw_admm} and definitions of $z_0,z_1$ and $z_{\mu}^*$.
Hence, combining the last estimative with \eqref{ineq_lema007d1d0},   we obtain 
$$
(dw)_{z_1}(z_{\mu}^*)\leq \frac{\theta}{2-\theta}\left(1+\frac{2(\theta-1)}{\theta}\right)(dw)_{z_0}(z_{\mu}^*)=\frac{3\theta-2}{2-\theta}(dw)_{z_0}(z_{\mu}^*).
$$ 
Therefore, statement (a) follows from \eqref{eq_000000012}, the last inequality and Lemma~\ref{lm:dz}.
 
(b) From the inclusion  \eqref{aux.2} we see that
$$
 C^*(\tilde{x}_j-\beta C(y_j-y_{j-1}))-G(y_j-y_{j-1})\in \partial f_{\mu,\beta}(y_j) \qquad  \forall  j\geq1
$$
where $f_{\mu,\beta}(y):=f(y) + (\beta\mu/2)\|C(y-y_0)\|_\X^2+ (\mu/2)\|y-y_0\|_{\Y,G}^2$  for every $y\in \Y$. Hence, using  relation \eqref{aux.3}, we have
\begin{equation}\label{eq:auxlemadmm}
\frac{1}{\theta} C^*(x_j-(1-\theta)x_{j-1})-G(y_j-y_{j-1})\in \partial f_{\mu,\beta}(y_j) \qquad \forall j\geq1.
\end{equation}
For every $k\geq 2$, using the previous inclusion for $j=k-1$ and $j=k$, it follows from the monotonicity of the subdifferential of $f_{\mu,\beta}$ and the definitions of  
$p_k$, $a_k$ and $q_k$ that
\begin{align*}
0&\leq \left \langle \frac{1}{\theta} C^*(x_k-x_{k-1})- \frac{(1-\theta)}{\theta} C^*(x_{k-1}-x_{k-2})- G(y_k-y_{k-1})+ G(y_{k-1}-y_{k-2}),y_k-y_{k-1}\right\rangle_\Y \\
  &=  \langle \beta C^*(p_k-(1-\theta)p_{k-1}),y_{k-1}-y_{k}\rangle_\Y- \langle Ga_k- Ga_{k-1},a_k  \rangle_\Y\\
  &=\langle p_k-(1-\theta)p_{k-1},q_k\rangle_\Y-\|a_k\|_{\Y,G}^2+ \langle G a_{k-1},a_k
  \rangle_\Y\\
   &\leq \langle p_k-(1-\theta)p_{k-1},q_k\rangle_\Y-(1/2)\|a_k\|_{\Y,G}^2+ (1/2)\|a_{k-1}\|_{\Y,G}^2
\end{align*}
where the last inequality is due to fact that $ 2 \langle Ga_k,a_{k-1}\rangle_\Y \leq \|a_k\|_{\Y,G}^2+\|a_{k-1}\|_{\Y,G}^2 $.
Therefore, (b) follows immediately from the last inequality, and then the proof is concluded.
 \hfill{ $\square$}



\def\cprime{$'$}

\end{document}